\newtheorem{thm}{Theorem}
\newtheorem*{introHiggsthm}{Theorem \ref{thm:higgs description}}
\newtheorem*{introMainthm}{Theorem \ref{thm:main theorem}}
\newtheorem{lem}{Lemma}
\newtheorem{prop}{Proposition}
\newtheorem*{introLaxprop}{Proposition \ref{prop:geometric lax matrix}}
\newtheorem{cor}{Corollary}
\theoremstyle{definition}
\newtheorem{defn}{Definition}
\newtheorem{notn}{Notation}
\newtheorem{construction}{Construction}
\theoremstyle{remark}
\newtheorem{rmk}{Remark}
\DeclareMathOperator{\Tr}{Tr}
\DeclareMathOperator{\Jac}{Jac}
\DeclareMathOperator{\supp}{supp}
\DeclareMathOperator{\coker}{coker}
\DeclareMathOperator{\Tot}{Tot}
\DeclareMathOperator{\Cof}{Cof}
\DeclareMathOperator{\End}{End}
\DeclareMathOperator{\Aut}{Aut}
\DeclareMathOperator{\Hom}{Hom}
\DeclareMathOperator{\Ext}{Ext}
\DeclareMathOperator{\Res}{Res}
\DeclareMathOperator{\Pic}{Pic}
\DeclareMathOperator{\Spec}{Spec}
\DeclareMathOperator{\Hitchin}{Hitchin}
\DeclareMathOperator{\At}{At}
\DeclareMathOperator{\Cl}{Cl}
\DeclareMathOperator{\diag}{diag}
\DeclareMathOperator{\Bun}{Bun}
\newcommand{\PP}{\mathbb P}
\newcommand{\lra}{\longrightarrow}
\newcommand{\OO}{\mathcal O}
\newcommand{\thetachar}[2]{\theta \begin{bmatrix}#1 \\ #2\end{bmatrix}}
\title{Spectral Description of the Spin Ruijsenaars-Schneider System}
\author{Matej Penciak}
\address{Department of Mathematics\\Northeastern University\\Boston, MA 02215 USA}
\email{m.penciak@northeastern.edu}
\begin{document}

\begin{abstract}
Fix a Weierstrass cubic curve \(E\), and an element $\sigma$ in the Jacobian variety $\Jac E$ corresponding to the line bundle \(\mathcal L_\sigma \). We introduce a space \(\mathsf{RS}_{\sigma, n}(E, V)\) of pure 1-dimensional sheaves living in \(S_\sigma = \PP(\OO \oplus \mathcal L_\sigma)\) together with framing data at the \(0\) and \(\infty\) sections \(E_0, E_\infty \subset S_\sigma \). For a particular choice of \(V\), we show that the space \(\mathsf{RS}_{\sigma, n}(E, V)\) is isomorphic to a completed phase space for the spin Ruijsenaars-Schneider system, with the Hamiltonian vector fields given by tweaking flows on sheaves at their restrictions to \(E_0\) and \(E_\infty\). We compare this description of the RS system to the description of the Calogero-Moser system in \cite{MR2377220}, and show that the two systems can be assembled into a universal system by introducing a \(\sigma \rightarrow 0\) limit to the CM phase space. We also shed some light on the effect of Ruijsenaars' duality between trigonometric CM and rational RS spectral curves coming from the two descriptions in terms of supports of spectral sheaves.
\end{abstract}

\maketitle

\section{Introduction}\label{sec:Intro}
This paper deals with a completely integrable particle system called the Ruijsenaars-Schneider (RS) system. Until the works of Calogero, Moser, and others in the 1970s the only known integrable particle systems were essentially those of two orbiting bodies or an ensemble of simple harmonic oscillators. In \cite{1971JMP....12..419C} Calogero introduced a novel one dimensional, n-particle system with pairwise interacting potential and solved the quantum system. It was then Moser in \cite{Moser} who proved the corresponding classical system defined by the Hamiltonian
\[
H_{CM} = \frac{1}{2} \sum_{i=1}^{n} \frac{p_i^2}{m} + \frac{g^2}{2} \sum_{i \neq j} \frac{1}{(q_i - q_j)^2}
\]
defines a completely integrable Hamiltonian system. The way Moser was able to prove the integrability of the system was by using new techniques of Lax operators introduced in \cite{doi:10.1002/cpa.3160210503}. Since the proof of integrability a plethora of integrable particle systems have been constructed. The focus of this paper is on one of those systems introduced by Ruijsenaars and Schneider \cite{MR851627}.

The RS system can be viewed as a relativistic generalization of the Calogero-Moser (CM) system which we now recall. The \(n\)-particle CM system is defined as a system of interacting particles with complex positions \(q_1, \ldots q_n\) and momenta \(p_1, \ldots, p_n\) by the Hamiltonian 
\[
H_{CM} = \frac{1}{2} \sum_{i=1}^{n} \frac{p_i^2}{m} + \frac{g^2}{2} \sum_{i \neq j} f(q_i - q_j)
\]
where \(f(z)\) is one of three functions corresponding to the \emph{rational}, \emph{trigonometric}, or \emph{elliptic} cases of the CM system:
\[
f(z) = \begin{cases}
	\,\, \frac{1}{z^2} \,\,\,\, \text{rational} \\
	\\
	\,\, \frac{1}{\sin^2 z} \,\,\,\, \text{trigonometric}\\
	\\
	\,\, \wp(z) \,\,\,\, \text{elliptic.}
\end{cases}
\]
Where \(\wp(z)\) is the Weierstrass elliptic function defined with respect to a pair of periods \(\omega_1, \omega_2\) defining an elliptic curve \(E\). Because of the bi-periodicity of \(\wp(z)\), we can view the elliptic CM system as a system of particles living on the ellitpic curve \(E\). As described in Section \ref{subsec: line bundles} the rational and trigonometric potentials can be obtained as degenerations of \(\wp(z)\), or geometrically we can view the trigonometric and rational systems as degenerations of the underlying elliptic curve \(E\) into the nodal and cuspidal cubic curve.

Finally we may additionally introduce internal spin degrees of freedom \(u_i \in \mathbb C^k\), \(v_i \in (\mathbb C^k)^*\) by adding a factor of \(v_i(u_j)\) to the \(f(q_i - q_j)\) summand in \(H_{CM}\). These additional ingredients define the \emph{spin} CM system which is also known to be integrable. 

The RS system can be viewed as a relativistic analogue of the CM system. In the \(n\)-particle RS system, the positions \(q_i\) of the particles still live on an elliptic curve \(E\), or its nodal and cuspidal limits. On the other hand the momenta are replaced by the relativistic quantities known as \emph{rapidities} which take values in \(\mathbb C^*\), and hence can be written as \(e^{\theta_i}\) for \(\theta_i \in \mathbb C\). The formula for the RS Hamiltonian is 

\[
H_{RS} = mc^2 \sum_{i = 1}^n \cosh\left(\frac{\theta_i}{mc}\right) \prod_{k, \, k \neq i}^n(\nu + \kappa f(q_k - q_i))
\]
where \(f(z)\) is one of the three functions also used in the definition of the CM system above, and \(\nu, \kappa\) are physical constants that depend on \(c\). 

The sense in which the RS system is a relativistic version of the CM system can be made precise by evaluating of the non-relativistic limit of the RS Hamiltonian and recovering the CM Hamiltonian
\[
H_{CM} = \lim_{c \rightarrow \infty} H_{RS} - n m c^2.
\]
We can similarly introduce internal spin degrees of freedom for the RS particles to define the spin RS system in the same way as we did for CM.

The integrability of the CM and RS system follows by bringing the systems into \emph{Lax form}. The Lax form of a system is given by a pair of matrices \(L = L(\mathbf q, \mathbf p), M = M(\mathbf q, \mathbf p)\) for which the equations
\[
\frac{d}{dt} L = [M, L]
\]
are equivalent to the original equations of motion. The benefit of bringing a system to Lax form is that it yields constants of motion as the eigenvalues of the matrix L. Equivalently, we can choose a basis for the constants of motion given as traces of powers of L or the coefficients of the characteristic polynomial. With the additional structure of a classical \(r\)-matrix, these constants of motion can be proven to Poisson commute, and hence define a completely integrable system

It was in this way that Moser proved the integrability of the CM system, with the rational CM Lax matrix being given by the formula
\begin{equation}\label{eqn:CMLax}
	L_{CM}(\lambda) = \delta_{i,j} p_i + g(1- \delta_{i,j})(1/(q_i - q_j) - 1/\lambda)
\end{equation}
where here we have a Lax matrix with \emph{spectral parameter} \(\lambda\). The constants of motion when a spectral parameter is present are given as the coefficients of \(\lambda\) in the expansion of \(\Tr L(\lambda)^i\). In particular, the constant term of \(\Tr L_{CM}(\lambda)^2\) yields \(H_{CM}\) from above. 

When introduced, the RS system was proven to be integrable without a Lax matrix construction, but later it was provided a Lax matrix in \cite{ruijsenaars1987}. Since then many Lax matrices have been constructed, and we recall the various definitions in Section \ref{subsec: recollections on RS}. After having established integrability via the Lax formalism, a natural goal is to then provide a geometric picture of why the system is integrable. 

\subsection{Geometric perspectives on integrability} \label{subsec: geometric integrability}

Shortly after their introduction, the rational and trigonometric CM systems were given a geometric description by Kazhdan, Kostant, and Sternberg in \cite{doi:10.1002/cpa.3160310405}. The geometric picture given is that of Hamiltonian reduction on the space \(T^* \mathfrak{gl}_n\). The adjoint action of \(GL_n\) on its Lie algebra lifts to a Hamiltonian \(GL_n\) action on the cotangent bundle \(T^* \mathfrak{gl}_n \cong \mathfrak{gl}_n \times \mathfrak{gl}_n\) with moment map
\begin{align*}
	\mu: T^* \mathfrak{gl}_n &\lra \mathfrak{gl}_n\\
	\mu: (X, Y) &\mapsto [X, Y]
\end{align*}
where we identify \(\mathfrak{gl}_n\) with its dual via the non-degenerate Killing form. Let \(\mathscr O\) be the co-adjoint orbit of \(\mathfrak{gl}_n\) of minimal dimension which contains the matrix \(O\):
\[O = 
\begin{pmatrix}
	0 & 1 & \dots & 1 \\
	1 & 0 & \dots & 1 \\
	\vdots & \vdots & \ddots & \vdots \\
	1 & 1 & \dots & 0
\end{pmatrix}.
\]
Applying Hamiltonian reduction on \(T^* \mathfrak{gl}_n\) with respect to the coadjoint orbit \(\mathscr O\) yields a completion of the rational CM phase space. One way to see this is that we can use the action of \(GL_n\) to diagonalize the matrix \(X = \diag(q_1, \dots q_n)\) in the moment map, and solving the resulting equation 
\[
[X, Y] = O
\]
forces \(Y\) to be the rational CM Lax matrix (without spectral parameter). The choice of either diagonalizing \(X\) or \(Y\) makes no difference in the solution to the moment map equation, and hence this exhibits a self-duality of the CM system where the diagonalizations of \(X\) and \(Y\) can be simultaneously viewed as an expression for the constants of motion and the positions of CM particles. We return to this duality in Section \ref{subsec: TCM-RRS}.

If we instead consider the adjoint action of \(GL_n\) on itself, and lift it to a Hamiltonian action on \(T^* GL_n \cong GL_n \times \mathfrak{gl}_n\) then the moment map for the action becomes
\begin{align*}
	\mu: T^* GL_n &\lra \mathfrak{gl}_n\\
	\mu: (X, Y) &\mapsto X Y X^{-1} - Y.
\end{align*}
Now there is a structural difference in the choice of matrix to diagonalize. Diagonalizing \(Y = \diag(q_1, \ldots q_n)\) and solving the moment map equation for \(X\) we find the trigonometric CM Lax matrix. On the other hand if we diagonalize \(X = \diag( \exp(\theta_1), \dots, \exp(\theta_n))\) and instead solve for \(Y\) we obtain the Lax matrix for a new integrable system which we now recognize as the rational Ruijsenaars-Schneider  system. \footnote{This is not strictly the case. For \(\mathfrak{gl}_n\) the solution is given by the Lax matrix for the MacDonald hierarchy. The rational RS Lax matrix is recovered by considering \(\mathfrak{sl}_n\) instead of \(\mathfrak{gl}_n\). \cite{nekrasov1996}}

The trigonometric RS system can be obtained in a similar way, where we now consider the diagonal action of \(GL_n\) on \(GL_n \times GL_n\) and consider the group-valued moment map
\begin{align*}
	\mu: GL_n \times GL_n &\lra GL_n\\
	\mu: (X, Y) &\mapsto X Y X^{-1} Y^{-1}.
\end{align*}
Performing quasi-Hamiltonian reduction with respect to the adjoint orbit containing the matrix
\[
O' = I + \mathbf{u} \mathbf{v}^T
\]
for \(\mathbf{u}, \mathbf{v} \in \mathbb C^n\) recovers the trigonometric RS system \cite{Oblomkov_doubleaffine}. Note that \(O' \in GL_n(\mathbb C)\) for generic choice of \(\mathbf u, \mathbf v\) by Sylvester's determinant lemma which states that \(\det O' = 1 + \mathbf u^T \mathbf v\). See, for example, \cite{10.1093/imrp/rpn008} for an introduction to quasi-Hamiltonian reduction used in the context of the trigonometric RS system.
\begin{rmk}
	The spin integrable systems can be obtained by modifying the above constructions with the spaces \(T^*(\mathfrak{gl}_n \times \mathbb C^k), T^*(GL_n \times \mathbb C^k)\), and \((GL_n \times GL_n) \times (\mathbb C^k \times (\mathbb C^k)^*)\) inheriting a (quasi-) Hamiltonian \(GL_n\) action which acts trivially on the \(\mathbb C^k\) factor. 

	Finally, the Hamiltonian reduction in the rational CM case can also be interpreted as a symplectic reduction exhibiting the CM phase space as a quiver variety associated to the tadpole quiver
	\begin{center}
		\begin{tikzpicture}
			\node (1) {\(\mathbb C^n\)};
			\node[right= 1 cm of 1] (2) {\(\mathbb C^k\)};
			\path[->]
			(1) edge[out = 50, in = 140, looseness = 6] node[above]{\(X\)} (1)
			(1) edge[out = 310, in = 220, looseness = 6] node[below]{\(Y\)} (1)
			(1) edge[bend left] node[above]{\(p\)} (2)
			(2) edge[bend left] node[below]{\(q\)} (1);
		\end{tikzpicture}
	\end{center}
	Similarly, the trigonometric RS system is given as the multiplicative quiver variety associated to the same tadpole quiver. 
\end{rmk}

In order to understand the full elliptic hierarchies, linear algebra alone does not suffice. The geometric description of the elliptic CM system came in \cite{Krichever1980} wherein the author describes the elliptic CM integrable system as a Hitchin-type system. The Lax matrix appears as the Higgs field in the Hitchin description, and integrability comes from an interpretation of the Hitchin integrable system on \(T^* \Bun_n(E)\) for an elliptic curve \(E\). 

The connection of Krichever's Hitchin description with the previously known linear-algebraic perspective comes about by degenerating \(E\) to its nodal and cuspidal degenerations. A semi-stable bundle on a nodal and cuspidal curve is determined by the descent data from the normalizing \(\PP^1\) \cite{descentdata}. The descent data in the nodal case is given by an element of \(GL_n\) gluing the fibers of the trivial vector bundle over \(\PP^1\) at the two points that map to the nodal point in \(E_{node}\). The moduli of Higgs bundles on the nodal cubic is therefore given by Hamiltonian reduction
\[
T^*\Bun_n(E_{node}) \cong T^* GL_n /\!\!/ GL_n.
\]
Similarly, it is an element of \(\mathfrak{gl}_n\) acting on the fiber over the point mapping to the cuspidal point of \(E_{cusp}\) that defines descent to the cuspidal cubic \(E_{cusp}\). Both of these descent data are defined up to a change of basis, and hence an element of \(GL_n\). Therefore the moduli space of Higgs bundles on \(E\), \(T^* \Bun_n(E)\), is obtained by Hamiltonian reduction in the cuspidal and nodal cases. So the linear algebraic descriptions of the rational and trigonometric CM systems are compatible with the description given by Krichever. 

Finally, the dual Hitchin perspective in terms of Hitchin spectral curves of the CM system was completed by Treibich and Verdier in \cite{Treibich1990}, \cite{TV2} in which the spectral curves of the Hitchin integrable systems were given in terms of tangential covers of the base elliptic curve. These results, among others motivated the authors of \cite{MR2377220} to describe the spin CM hierarchy in terms of spectral sheaves living on the projectivization of the total space of the Atiyah bundle \(\overline{E}^\natural\).

\subsection{Main results} \label{subsec: main results}

The goal of this paper is to provide a description of the full elliptic spin RS system as is given in \cite{MR2377220} for the CM system. The primary object of study is the moduli space \(\mathsf{RS}_{\sigma, n}(E, V)\) from Definition \ref{def: RS phase space}. Let \(S_\sigma = \PP(\mathcal O \oplus \mathcal L_\sigma)\) be the associated \(\PP^1\) bundle to the line bundle \(\mathcal L_\sigma\) given by an element \(\sigma \in \Jac E\). \(S_\sigma\) has two sections, \(E_0\) and \(E_\infty\), and let \(V\) be a torsion sheaf on the divisor \(E_0 \cup E_\infty\). Then the space \(\mathsf{RS}_{\sigma, n}(E, V)\) parametrizes what we call framed RS spectral sheaves. The unframed spectral sheaves are sheaves on \(S_\sigma\) with pure 1-dimensional support on curves forming ramified coverings over \(E\). The additional data of the framing is an isomorphism of the restriction of the sheaf to the two \(\mathscr F|_{E_0 \cup E_\infty} \overset{\sim}{\rightarrow} V\). For simple framing \(V = \OO_{q_0} \oplus \OO_{q_\infty}\) we call the corresponding space the spinless RS phase space, and when \(V = \OO_{q_0}^k \oplus \OO_{q_\infty}^k\) we get the spin system.

The proof that this moduli space actually plays the role of the RS phase space can be split up into 3 main arguments:
\begin{enumerate}
	\item Provide a dual description of \(\mathsf{RS}_{\sigma, n}(E, V)\) in terms of vector bundles on the base curve \(E\) together with a pair of Higgs fields \(\eta_0, \eta_\infty\).
	\item Identify the spin RS Lax matrix as a particular combination of the Higgs fields. 
	\item Define a natural set of flows on \(\mathsf{RS}_{\sigma, n}(E, V)\) which are simultaneously the flows defined by the Hitchin fibration and also isospectral flows of the RS Lax matrix.
\end{enumerate}

In the definition of \(\mathsf{RS}_{\sigma,n}(E,V)\) given above we do not require any conditions for \(V\), but because of the geometry of \(S_\sigma\) the intersection profile of the support of \(\mathscr F\) with \(E_0\) and \(E_\infty\), given by the divisors \(D_0\) and \(D_\infty\) on \(E\) respectively, must be related by translation. This implies the support of \(V_0\) and \(V_\infty\) are shifts, but the underlying sheaves may not be related. As such we restrict to this translated case in the statement of Theorem \ref{thm:higgs description} which resolves step 1 in the above sketch:

\begin{introHiggsthm}
	Let \(V\) be an RS framing sheaf with support \(D_0\) on \(E_0\) and \(D_\infty\) on \(E_\infty\) with the property that \(V_0\) and \(V_\infty\) are translates by \(D_\infty - D_0\). The moduli space \(\mathsf{RS}_{\sigma, n}(E,V)\) parametrizes the the data of a semi-stable rank \(n\) vector bundle \(W\) together with a pair of prolonged Higgs fields
	\begin{align*}	
		\eta_0 : W \lra (W \otimes \mathcal L_\sigma^{-1})(D_\infty) \\
		\eta_\infty: W \lra (W \otimes \mathcal L_\sigma)(D_0)
	\end{align*}
	with framing morphisms \((u_0, v_0)\) and \((u_\infty, v_\infty)\) respectively subject to the condition that \[\eta_0(z) = \eta_\infty^{-1}(z + n(\sigma - b))\]
\end{introHiggsthm}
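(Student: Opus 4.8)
The plan is to realize the correspondence as a relative Beauville--Narasimhan--Ramanan (BNR) spectral construction over the base $E$, exploiting the two standard affine charts of the ruled surface $p\colon S_\sigma \to E$. Write $U_0 = S_\sigma \setminus E_\infty \cong \Tot(\mathcal L_\sigma)$ and $U_\infty = S_\sigma \setminus E_0 \cong \Tot(\mathcal L_\sigma^{-1})$, and let $w, w'$ be the tautological fibre coordinates, sections of $p^*\mathcal L_\sigma$ and $p^*\mathcal L_\sigma^{-1}$ satisfying $w w' = 1$ on the overlap. Given a framed spectral sheaf with framing $\mathscr F|_{E_0 \cup E_\infty} \overset{\sim}{\to} V$, I would first form $p_*\mathscr F$. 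Since $\mathscr F$ is pure of dimension one with support a degree-$n$ ramified cover of $E$, the pushforward is torsion-free of rank $n$, and restricting to the semistable locus of the moduli problem this (possibly after the framing modification of the next step) is the bundle $W$ of the statement.

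Next I would produce the two prolonged Higgs fields by pushing forward multiplication by the fibre coordinates. Multiplication by $w'$ is regular on $U_\infty$ and meromorphic on $U_0$ with poles only along the image of $\supp\mathscr F \cap E_0$, so its pushforward is an $\mathcal L_\sigma^{-1}$-twisted operator with a pole along $D_0$; symmetrically multiplication by $w$ yields an $\mathcal L_\sigma$-twisted operator with a pole along $D_\infty$. The framing isomorphism is exactly the data that reconciles these naive pushforwards with the prolonged forms $\eta_0\colon W \to (W \otimes \mathcal L_\sigma^{-1})(D_\infty)$ and $\eta_\infty\colon W \to (W \otimes \mathcal L_\sigma)(D_0)$ recorded in the theorem: restricting to the framed sections presents the residues of the two operators as rank-one maps, which I would read off as the data $(u_0, v_0)$ and $(u_\infty, v_\infty)$, and the corresponding Hecke modification along $D_0 \cup D_\infty$ moves the allowed poles into the divisors stated. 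Pinning down the local model at each section carefully enough to determine which section contributes which divisor, and to verify that the framing data is precisely a splitting of the residue maps, is the bookkeeping core of this step.

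The decisive relation $\eta_0(z) = \eta_\infty^{-1}(z + n(\sigma - b))$ comes from the identity $w w' = 1$ on the overlap $U_0 \cap U_\infty$. There the two multiplication operators are literally inverse to one another, so after pushforward $\eta_0$ and $\eta_\infty$ are mutually inverse as endomorphism-valued sections; the only subtlety is the argument shift. I would compute this shift by comparing the two trivializations of the spectral parameter: passing between the charts twists by $\mathcal L_\sigma^{\pm 1}$, and under the Abel--Jacobi identification of $E$ with $\Jac E$ such a twist translates the spectral parameter $z$ by $\sigma$; since $W$ has rank $n$ the relevant comparison runs through $\det$, producing the factor $n$, while the base point $b$ records the normalization against which $\mathcal L_\sigma$ and the framing are set up. Obtaining the precise constant $n(\sigma - b)$ --- rather than merely asserting that $\eta_0$ and $\eta_\infty$ are inverse up to some shift --- is the step I expect to be the main obstacle, since it requires matching the transition cocycle of $S_\sigma$ against the chosen theta-normalization.

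Finally I would construct the inverse assignment: from $(W, \eta_0, \eta_\infty)$ with framings satisfying the inverse relation, build the joint spectral sheaf on $S_\sigma$ as the cokernel presentation associated to $(w - \eta_\infty)$ over $U_0$ glued to $(w' - \eta_0)$ over $U_\infty$, the inverse relation being exactly the compatibility that makes these presentations agree across $U_0 \cap U_\infty$, while the framing morphisms reassemble the isomorphism $\mathscr F|_{E_0 \cup E_\infty} \cong V$. Checking that $p_*$ and this spectral construction are mutually inverse --- so that the assignment is an isomorphism of moduli spaces and not merely a bijection on points --- is then routine given the classical BNR equivalence applied chart by chart, together with the translated-framing hypothesis on $V$, which guarantees that $D_\infty - D_0$ aligns the supports of $V_0$ and $V_\infty$ and hence that the gluing is well posed.
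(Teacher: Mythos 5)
Your overall skeleton is the same as the paper's: the paper's Proposition \ref{prop:Koszul duality} is exactly your chart-by-chart spectral correspondence on $S_\sigma^0 \cong \Tot(\mathcal L_\sigma)$ and $S_\sigma^\infty \cong \Tot(\mathcal L_\sigma^{-1})$, with the pushforward of multiplication by the fibre coordinates packaged as two elementary modifications $0 \to W \to W' \to V_\infty \to 0$ and $0 \to W \otimes \mathcal L_\sigma^{-1} \to W' \to V_0 \to 0$, and the inverse direction given by a cokernel presentation, just as you propose. The theorem is then deduced by converting this data into the two prolonged Higgs fields via pullback squares along $u_0, u_\infty$.

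The genuine gap is in your third paragraph, the step you yourself flag as the main obstacle. First, the identity $w w' = 1$ cannot make $\eta_0$ and $\eta_\infty$ ``mutually inverse as endomorphism-valued sections'': $\eta_0$ has poles along $D_\infty$ while $\eta_\infty^{-1}$ has poles along $D_0$ (and these divisors differ), so no untranslated inverse relation can hold; the shift is not a normalization to be fixed afterwards but the entire content of the identity. Second, your proposed mechanism for the shift --- the chart-to-chart twist by $\mathcal L_\sigma^{\pm 1}$ fed through $\det W$ to produce the factor $n$ --- is not the mechanism that works, and it is not justified by anything in your setup. In the paper the factor $n$ has nothing to do with determinants: it comes from intersection theory on $S_\sigma$, namely Lemma \ref{lem:divisor relation}. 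The spectral curve lies in the class $|n E_\infty + F_{D_0}|$ and $E_\infty . E_\infty = b - \sigma$, whence $D_\infty = \Sigma . E_\infty = n(b - \sigma) + D_0$; the $n$ is the covering degree entering through the class of $\Sigma$, not through $\det W$. The relation $\eta_0(z) = \eta_\infty^{-1}(z + n(\sigma - b))$ is then proved by introducing the translation $\alpha_n : E \to E$ by $n(\sigma - b)$, using the hypothesis that $V_0$ and $V_\infty$ are translates to identify the two pullback squares defining $W'$ (the isomorphism $W' \cong W''$ in the paper's proof), and checking that $\alpha_n$ intertwines the inclusions $t_0 : W \hookrightarrow W(D_0)$ and $t_\infty : W \hookrightarrow W(D_\infty)$, so that the two compositions $t_\infty \circ (\iota_0^{-1}\iota_\infty)$ and $t_0 \circ (\iota_\infty^{-1}\iota_0)$ become inverse up to exactly this translation. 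In your write-up the translated-framing hypothesis on $V$ appears only as an afterthought about the gluing being ``well posed,'' but it is precisely what makes the displayed relation meaningful and provable; without Lemma \ref{lem:divisor relation} and the $\alpha_n$-intertwining argument, your derivation of the key identity does not go through.
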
	

Once the Hitchin description is obtained, the relation between \(\eta_0\) and \(\eta_\infty\) closely mirror the description of the matrix of intertwining vectors in \cite{Hasegawa1997} and \cite{math/9909079}. This suggests considering the composition \(\eta_0 \circ \eta_\infty\), which is the precise way in which the RS Lax matrix is obtained in the above references. By translating everything into the language of Weierstrass \(\sigma\)-functions by Lemma \ref{lem: elliptic functions} we resolve part 2 of the sketch by: 
\begin{introLaxprop}
	Let \(\eta_0\) and \(\eta_\infty\) be the associated Higgs fields for an RS spectral sheaf in \(\mathsf{RS}_{\sigma, n}(E, \OO_{p_0} \oplus \OO_{p_\infty})\) where \(E\) is a smooth Weierstrass cubic. On the locus of decomposable vector bundles, the composition \((\eta_0 \otimes id) \circ \eta_\infty\) of equation \eqref{eqn:composition} equals the RS Lax matrix
	\[
	((\eta_0 \otimes id) \circ \eta_\infty)_{k,k'} = \frac{\sigma(z + \hbar + q_k - q_{k'})}{\sigma(z)} \prod_{l \neq k} \frac{\sigma(\hbar + q_l - q_{k'})}{\sigma(q_{l} - q_k)} \exp P_k
	\]
	with \(\hbar = (\sigma - b)\). 
\end{introLaxprop}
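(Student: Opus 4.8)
The plan is to work entirely on the decomposable locus, where \(W\cong\bigoplus_{k=1}^{n}\mathcal L_{q_k}\) is a sum of degree-zero line bundles indexed by the particle positions \(q_k\), and to compute both Higgs fields as explicit matrices of Weierstrass \(\sigma\)-functions. In this splitting the \((k,k')\) entry of \(\eta_\infty\) is a global homomorphism \(\mathcal L_{q_{k'}}\to(\mathcal L_{q_k}\otimes\mathcal L_\sigma)(p_0)\), hence a section of a line bundle on \(E\) of the form \(\mathcal L_{q_k-q_{k'}+\sigma}(p_0)\); similarly each entry of \(\eta_0\) is a section of \(\mathcal L_{q_k-q_{k'}-\sigma}(p_\infty)\). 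By Lemma \ref{lem: elliptic functions} every such section is a scalar multiple of a ratio of \(\sigma\)-functions, pinned down by its pole (at \(p_0\), resp.\ \(p_\infty\)) and by its unique zero, which is fixed by the line-bundle class. Normalizing the spectral coordinate \(z\) so that \(p_0\) sits at \(z=0\), this already forces the prefactor \(\sigma(z+\hbar+q_k-q_{k'})/\sigma(z)\) with \(\hbar=\sigma-b\); the residual scalar in each entry is exactly the freedom carried by the framing data \((u_\infty,v_\infty)\) and the chosen local trivializations, which I would package into the \(\exp P_k\) prefactor.

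The resulting matrix \(\eta_\infty(z)\) is an elliptic Cauchy matrix in the nodes \(q_k\), so I would next invoke the structural constraint \(\eta_0(z)=\eta_\infty^{-1}(z+n(\sigma-b))\) supplied by Theorem \ref{thm:higgs description}. Geometrically this relation is the shadow of the classical fact that the inverse of an elliptic Cauchy matrix is again such a matrix at a spectral argument shifted by the sum of its nodes, which accounts for the appearance of \(n\hbar\) and mirrors the intertwining-vector picture of the cited references. Explicitly inverting via the Frobenius elliptic Cauchy determinant identity, the cofactors of \(\eta_\infty\) produce precisely the antisymmetric product \(\prod_{l\neq k}\sigma(q_l-q_k)\) in the denominator, while the numerator of the determinant supplies the interaction product \(\prod_{l\neq k}\sigma(\hbar+q_l-q_{k'})\). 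This is the origin of the product over \(l\neq k\) in the target formula and is the main computational input of the proof.

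With explicit entries for both factors in hand, I would assemble the composition \((\eta_0\otimes\mathrm{id})\circ\eta_\infty\) of \eqref{eqn:composition}. Rather than expand the matrix product and collapse the resulting elliptic sum term by term, I would finish by a comparison-of-sections argument: the \((k,k')\) entry of the composition and the \((k,k')\) entry of the claimed Lax matrix are both meromorphic sections of the same line bundle on \(E\) — the class \(\mathcal L_{q_k-q_{k'}}\) twisted by the prolongation divisor supported on \(\{p_0,p_\infty\}\) — and hence share the same quasi-periodicity in \(z\). By Liouville's theorem it then suffices to check that they have the same poles and residues, reducing the identity to a single residue computation together with agreement of one normalization constant. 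The framing and prolongation conditions are what pin down the pole behaviour at \(p_\infty\), and tensoring \(\eta_0\) by \(\mathcal L_\sigma\) merely shifts each entry's line-bundle class by \(\sigma\), cancelling the \(\pm\sigma\) twists so that the composition genuinely lands in \(\mathcal L_{q_k-q_{k'}}\) twisted by \(\{p_0,p_\infty\}\).

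\textbf{Main obstacle.} I expect the genuine difficulty to be concentrated in the bookkeeping of constants rather than in any deep new identity: one must verify that the automorphy factors from the twist by \(\mathcal L_\sigma\), the local trivializations at \(p_0\) and \(p_\infty\), and the framing vectors \((u,v)\) combine to give exactly \(\exp P_k\) with no stray curve-dependent factor, and that the elliptic Cauchy inverse reproduces the stated products after the shift by \(n\hbar\). The section-comparison argument above is designed precisely to sidestep the bulk of the elliptic-sum manipulation, so that the remaining work is matching a single residue and tracking scalars, which I would organize by comparing both sides of \eqref{eqn:composition} as sections of one fixed line bundle on \(E\).
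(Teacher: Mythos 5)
Your first two steps coincide with the paper's own proof: on the decomposable locus the entries of \(\eta_\infty\) are sections of degree-one line bundles, hence single \(\sigma\)-quotients determined up to the scalars \(\exp P_k\), and the relation \(\eta_0(z)=\eta_\infty^{-1}(z+n\hbar)\) from Theorem \ref{thm:higgs description} is the structural input. You part ways at the decisive step. The paper never inverts \(\eta_\infty\): it recognizes \((\eta_0\otimes id)\circ\eta_\infty\) as a product \(F^{-1}(z)\cdot F(z+n\hbar)\), rewrites such products as ratios of determinants via Lemma \ref{lem:matrix product}, and then quotes Hasegawa's evaluation of exactly this product (Lemma \ref{lem:intertwining lemma}, Equation 31 of \cite{Hasegawa1997}), translating intertwining vectors into \(\sigma\)-functions with Lemma \ref{lem: elliptic functions} and equation \eqref{eqn:sigma shift}. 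Your plan---explicit inversion through the Frobenius elliptic Cauchy determinant (Lemma \ref{lem:determinant} and its minors formula \eqref{eqn: minors of cauchy}) followed by a Liouville-type comparison of sections---is a genuinely different and more self-contained route; the paper performs the Cauchy inversion only in Remark \ref{rmk: alt Lax matrix}, and there it recovers Ruijsenaars' original Lax matrix, not the composition.

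The gap is that your Liouville step quietly contains the whole theorem. The composition maps \(W\) to \(W(D_0+D_\infty)\), so its entries may a priori have simple poles at both \(p_0\) and \(p_\infty\); worse, the naive matrix inverse \(\eta_\infty^{-1}(z+n\hbar)\) computed by Cramer's rule has poles along the shifted zero divisor of \(\det\eta_\infty\), which is not contained in \(\{p_0,p_\infty\}\). The claimed Lax matrix has a single simple pole in \(z\). So before any residue matching you must prove that all but one of the candidate poles of \(\sum_l(\eta_0)_{k,l}(\eta_\infty)_{l,k'}\) carry vanishing residue, i.e.\ that the rank-one residue (adjugate) of one factor is annihilated by the other factor evaluated at that point. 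This kernel alignment under the shift by \(n\hbar\) is precisely the intertwining property that Hasegawa's identity encodes; it is the theorem's actual content, not ``bookkeeping of constants.'' Likewise, the one surviving residue is an \(n\)-term contraction, and it collapses to the product \(\prod_{l\neq k}\) only through a degenerate limit of the Frobenius formula (one column of the Cauchy matrix replaced by the framing vector). Both steps can in fact be carried out with Lemma \ref{lem:determinant}, so your strategy is completable, but as written it defers the hard part to the two items you describe as routine. One further point of care: for \(k\neq k'\) the difference of the two sides is a section of the nontrivial degree-zero bundle \(\OO_E(q_k-q_{k'})\), so pole matching alone forces it to vanish, whereas the residual constant you propose to fix by normalization occurs only for the diagonal entries \(k=k'\); this asymmetry should appear explicitly in the final argument.
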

The subsequent Corollaries \ref{cor:singular lax} and \ref{cor:spin Lax matrix} resolve the spin RS case, and its singular degenerations. 

Finally we introduce the desired flows of item 3 in the sketch. These are \emph{tweaking} flows on RS spectral sheaves. A simple version of these flows were introduced in \cite{Krichever1980} for the CM system, and they were studied by \cite{PMIHES_2001__94__87_0}, \cite{Donagi1996}, and others in the context of other integrable systems. The terminology was coined in \cite{MR2377220} for their flows on CM spectral sheaves. Informally, tweaking flows can be thought of as changing the gluing isomorphism between the restrictions of the sheaf \(\mathscr F\) to the divisor \(E_0 \cup E_\infty\) and its complement \(S_\sigma^*\). In Proposition \ref{prop: tweaking theorem} we show that the tweaking flows satisfy the requirements of the sketch, and as a corollary we get the main theorem:
\begin{introMainthm}
	The moduli space \(\mathsf{RS}_{\sigma, n}(E, \OO_{q_0}^k \oplus \OO_{q_\infty}^k)\) is isomorphic to a completion of the spin Ruijsenaars-Schneider phase space, with RS flows given by tweaking flows on RS spectral sheaves. 
\end{introMainthm}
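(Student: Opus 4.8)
The plan is to assemble the statement from the three results already in place, treating the theorem as the culmination of the preceding development rather than as an independent argument. The isomorphism of spaces will come from the Higgs description of Theorem~\ref{thm:higgs description}, the matching of the Lax matrix from Proposition~\ref{prop:geometric lax matrix} together with Corollary~\ref{cor:spin Lax matrix}, and the identification of the dynamics from Proposition~\ref{prop: tweaking theorem}.

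First I would make the isomorphism explicit on the dense locus where the underlying bundle $W$ splits as a direct sum of degree-one line bundles $\OO_E(q_i)$. By Theorem~\ref{thm:higgs description}, a point of $\mathsf{RS}_{\sigma,n}(E, \OO_{q_0}^k \oplus \OO_{q_\infty}^k)$ over this locus is recorded by the Higgs pair $(\eta_0, \eta_\infty)$ satisfying $\eta_0(z) = \eta_\infty^{-1}(z + n(\sigma - b))$, together with the framing morphisms $(u_0, v_0)$ and $(u_\infty, v_\infty)$. From this data I would read off the RS positions $q_i$ from the summands of $W$, the rapidities $e^{\theta_i}$ from the diagonal scaling $\exp P_k$ of Proposition~\ref{prop:geometric lax matrix}, and the internal spins $u_i \in \mathbb C^k$, $v_i \in (\mathbb C^k)^*$ from the components of the framing at $E_0$ and $E_\infty$. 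Since the generic summand bundle carries a torus of automorphisms that the framing rigidifies, a dimension count should confirm that this assignment is a bijection onto the spin RS phase space; the word \emph{completion} then refers precisely to the boundary strata where $W$ degenerates away from the split locus or where the $q_i$ collide, which the moduli space contains but the naive phase space omits.

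Next, to install the dynamics, I would invoke Corollary~\ref{cor:spin Lax matrix} to identify the composition $(\eta_0 \otimes \mathrm{id}) \circ \eta_\infty$ with the spin RS Lax matrix $L$ under these coordinates, so that the traces $\Tr L^j$ (equivalently, the coefficients of its characteristic polynomial) are the RS Hamiltonians. Because the tweaking flow of Proposition~\ref{prop: tweaking theorem} modifies only the gluing of $\mathscr F$ across $E_0 \cup E_\infty$ and leaves its scheme-theoretic support untouched, it fixes the spectral curve; this is the geometric content of isospectrality and yields $\frac{d}{dt} L = [M, L]$ for the induced $M$. It then remains to check that the induced motion of $(\eta_0, \eta_\infty)$ reproduces Hamilton's equations for $H_{RS}$, which is the remaining assertion of Proposition~\ref{prop: tweaking theorem}.

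The hard part will be this last matching. Verifying that the abstract tweak -- a motion along the Jacobian of the spectral curve, realized as a change of gluing datum -- descends under the isomorphism to the literal RS equations of motion, and not merely to \emph{some} isospectral deformation, requires computing the infinitesimal action of the tweak on the Higgs fields and comparing it term by term with the derivative of the Lax formula of Proposition~\ref{prop:geometric lax matrix}. I expect the Weierstrass $\sigma$-function identities of Lemma~\ref{lem: elliptic functions} to be exactly what converts the geometric variation into the relativistic potential $\prod_{l \neq k}$ factors. A secondary but nontrivial point is controlling the boundary behaviour needed to justify \emph{completion}, since one must verify that the tweaking flows extend across the degeneration strata where the above coordinates break down.
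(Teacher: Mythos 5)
Your proposal is correct and follows essentially the same route as the paper: Theorem \ref{thm:main theorem} is obtained there as an immediate assembly of Theorem \ref{thm:higgs description} (identification of the space), Proposition \ref{prop:geometric lax matrix} with Corollary \ref{cor:spin Lax matrix} (identification of \((\eta_0 \otimes id)\circ\eta_\infty\) with the spin RS Lax matrix), and Proposition \ref{prop: tweaking theorem} together with its corollary (tweaking flows are the Hamiltonian flows of \(\Tr(\eta_0\cdot\eta_\infty)^i\), hence of the RS Hamiltonians). The only point where you overestimate the remaining work is the anticipated term-by-term comparison of the tweak with the RS equations of motion: this is unnecessary in the paper's logic, because once the symplectic form on \(\mathsf{RS}_{\sigma,n}(E,V)\) is matched with the classical RS form \(\sum_i (d\theta_i/\theta_i)\wedge dq_i\) (the remark following Proposition \ref{prop:higgs tangent space}) and the Hamiltonian functions are matched via the Lax identification, equality of the Hamiltonian vector fields is automatic.
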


\subsection{Motivation and further directions} \label{subsec:motivation}

The impetus for providing this spectral description of the RS system came from trying to elucidate the 2D Toda-RS duality of \cite{MR1379076}. In \cite{MR2835323} the authors reinterpret the KP-CM correspondence originally considered by \cite{doi:10.1002/cpa.3160300106} in terms of a non-commutative Fourier-Mukai transform on \(\mathcal D\)-modules. A key step in their proof of the correspondence is the identification of the CM phase space as the moduli space of spectral sheaves in \(\overline{E}^\natural\). After defining the non-commutative Fourier-Mukai transform based off the previous results of \cite{PMIHES_1987__65__131_0} and \cite{polishchuk2001}, they identify the result of Fourier-Mukai of CM spectral sheaves with \(\mathcal D\)-bundles defining geometric points in the Sato Grassmannian parametrizing solutions to the KP integrable hierarchy.

The authors also set out to provide a similiar argument to prove the 2D Toda-RS duality. The key first step in this project is analogously to find a suitable spectral sheaf description of the RS integrable system. This paper achieves this first goal in Theorem \ref{thm:main theorem}. Mirroring the above argument, the next step should be to define a non-commutative Fourier-Mukai transform for difference modules on \(E\). Na\"ively applying the Fourier-Mukai transform to an RS spectral sheaf \(\mathscr F\), or equivalently its Higgs bundle  \(W\) with \(\mathcal L_\sigma\)-twisted Higgs fields \(\eta_0\), \(\eta_\infty\), the vector bundle \(W\) transforms into a torsion sheaf on \(E\) and the Higgs fields \(\eta_0\) and \(\eta_\infty\) define the steps \(\sigma_+\) and \(\sigma_-\) of a difference module. 

This simple observation suggests that the provided description is the ``correct'' description of the RS phase space for the purposes of providing a geometric description of the 2D Toda - RS correspondence. The final step in this program would be to provide a difference-module description of the phase space of the 2D Toda lattice hierarchy introduced by \cite{ueno1984}. The rest of this project is completed in the forthcoming work \cite{MPTNDBZ}.

\subsection{Structure of the paper} \label{subsec: structure}
We begin in Section \ref{sec:prelim} by introducing the necessary background on elliptic functions that will eventually be used in the proof of Proposition \ref{prop:geometric lax matrix}. With the necessary language of elliptic functions established, we also give explicit formulas and references for various RS Lax matrices that are used in this paper. Section \ref{sec:RS phase space} is where we introduce the main space of interest \(\mathsf{RS}_{\sigma,n}(E,V)\). After briefly discussing the geometry of divisors on \(S_\sigma\) we get to the main goal of providing a Hitchin description of the above moduli space. It is in Section \ref{sec: laxmatrix} where we concentrate on the coordinate description of the Higgs fields obtained in the previous section. After some preliminary matrix calculations we state and prove Proposition \ref{prop:geometric lax matrix} showing that the composition \(\eta_\infty \circ \eta_0\) gives the RS Lax matrix. In Section \ref{sec:flows} we turn to an infinitesmal study of the moduli space \(\mathsf{RS}_{\sigma,n}(E,V)\) where we provide a description of the tangent space in both the spectral and Hitchin descriptions. We also provide the precise definition of the RS tweaking flows in Definition \ref{def:RS spectral flows} which will define the RS hierarchy. Finally, we prove the main theorem, Theorem \ref{thm:main theorem}, which identifies \(\mathsf{RS}_{\sigma,n}(E,V)\) as the RS phase space together with its RS flows being given by tweaking flows on spectral sheaves. We end with Section \ref{sec:comparison} in which we compare our description of the RS system with the CM system. We concentrate on two specific relations: the degeneration of RS to CM in the \(c \rightarrow \infty\) limit, and the duality between the trigonometric CM and rational RS systems. In the first case we introduce the universal RS phase space \(\mathsf{RS}_{n}(E,\mathcal V)\) in \ref{def:universal RS} which provides a unified treatment of the RS system for all \(\sigma \in \Jac E\) and the CM system which can be thought of as the \(\sigma \rightarrow 0\) limit of the RS system. We prove in Proposition \ref{prop: universal RS} that this defines a universal integrable system intertwining all of the above systems. Finally in Section \ref{subsec: TCM-RRS} we try to spell out the trigonometric CM - rational RS duality in the perspective of RS and CM spectral sheaves. 

\section{Preliminaries} \label{sec:prelim}
\subsection{Sections of line bundles on elliptic curves and theta functions} \label{subsec: line bundles}

In this paper we choose to express the basis of sections of line bundles in terms of Weierstrass elliptic functions. These functions have the benefit of behaving well under degeneration to nodal and cuspidal cubic curves, and hence provide a unified way of treating the three forms of the RS system. The identities that are used to obtain the RS Lax matrix written in terms of the so-called intertwining vectors of \cite{MR908997} and \cite{Hasegawa1997}. As such we include a brief dictionary between the two bases of elliptic functions. The results stated in this section on elliptic functions are largely taken from \cite{MR2352717} and \cite{MR1007595}. First, fixing the periods \(\omega_1\) and \(\omega_2\) of the elliptic curve E we define the Weierstrass \(\sigma\)-function defined as the infinite product
\[
\sigma(z|\omega_1, \omega_2) = z \prod_{\substack{w \in \Lambda \\ w \neq 0}}\left(1 - \frac{z}{w}\right)\exp\left(\frac{z}{w} + \frac{z^2}{2 w^2}\right)
\] 
Where \(\Lambda = \mathbb Z \omega_1 \oplus \mathbb Z \omega_2\) is the lattice defining \(E\). When there is no ambiguity about the periods for \(\sigma(z|\omega_1, \omega_2)\) we simply write \(\sigma(z)\). 

The function \(\sigma(z)\) is doubly quasiperiodic with periods \(\omega_1\) and \(\omega_2\):
\begin{align*}
	\sigma(z + \omega_1) &= - e^{2\eta_1(z+\omega_1/2)} \sigma(z) \\
	\sigma(z + \omega_2) &= - e^{2\eta_1(z+\omega_2/2)} \sigma(z)
\end{align*}
for constants \(\eta_1, \eta_2\) related by the equation \(\eta_1 \omega_2 - \eta_2 \omega_1 = i \pi/2\). The key useful feature of \(\sigma(z)\) is that it has simple zeroes at the lattice points of \(\Lambda \subset \mathbb C\), and does not vanish outside this locus. In particular, this classification of the zeroes of \(\sigma\) together with the classical theorem of Appel-Humbert classifying line bundles on elliptic curves (or more general abelian varieties) in terms of factors of automorphy implies that the function 
\[
\Phi_q(z) = \frac{\sigma(z - q)}{\sigma(z)}
\]
defines a section of the line bundle on \(E\) corresponding to the divisor \(q-0\). After fixing a basepoint \(b \in E\) we can identify this line bundle with \(\OO_E(q - b)\).
The Weierstrass \(\sigma\)-function is related to the Weierstrass \(\wp\)-function by the identity
\[\wp(z) = -\frac{d^2}{dz^2} \log \sigma(z)\]
where we similarly supress the dependence of \(\wp\) on the periods of \(E\). In the limit of \(\omega_2 \rightarrow \infty\) the functions \(\sigma(z)\) and \(\wp(z)\) degenerate to the trigonometric functions
\begin{align*}
	\sigma(z) &\mapsto \sin(z) \\
	\wp(z) &\mapsto 1/\sin(z)^2
\end{align*}
up to unimportant constant factors. Further degenerating \(\omega_1, \omega_2 \rightarrow \infty\) such that the ratio \(\omega_1 / \omega_2\) is fixed, the Weierstrass elliptic functions degenerate to 
\begin{align*}
	\sigma(z) &\mapsto z \\
	\wp(z) &\mapsto 1/z^2
\end{align*}
again up to some factors. This nice degenerating behavior allows for the previously mentioned uniform treatment of the rational, trigonometric, and elliptic cases of the RS system. 

We next turn to another set of elliptic functions. Let \(\tau = \omega_i/\omega_j\) be the ratio of the \(\omega_i\) with positive imaginary part. The lattice \(\mathbb Z \oplus \mathbb Z \tau\), and the lattice \(\Lambda\) from above define the same elliptic curve \(E\). The infinite sum
\[
\thetachar{a}{b}(z | \tau) = \sum_{k \in \mathbb Z} \exp(\pi i \tau(k+a)^2 + 2 \pi i (k+a)(z+b))
\]
is called the theta function with characteristics \((a,b)\). When there is no ambiguity we again suppress the dependence on \(\tau\), \(\thetachar{a}{b}(z) = \thetachar{a}{b}(z|\tau)\). These theta functions are also quasi-periodic with respect to the lattice spanned by \(1, \tau\):
\begin{align*}
	\thetachar{a}{b}(z + 1) &= e^{2 \pi i a} \thetachar{a}{b}(z) \\
	\thetachar{a}{b}(z + \tau) & = e^{-\pi i(2z + 2b + \tau)} \thetachar{a}{b}(z). 
\end{align*}
And similar to the Weierstrass \(\sigma\)-function we have that the functions \(\thetachar{a}{b}(z)\) are entire functions with simple zeroes only at the lattice point translates of 
\[(a + 1/2) \tau + (b + 1/2).\]
In particular for \(a = b = 1/2\) the zeroes of \(\theta_1(z) = \thetachar{1/2}{1/2}(z)\) match those of \(\sigma(z)\). Therefore the ratio \(\theta_1(z)/\sigma(z)\) will be a non-vanishing doubly quasiperiodic function. These are called trivial theta functions and they have the form 
\(\vartheta(z) = C e^{A z^2 + B z}\) for some constants \(A, B, C \in \mathbb C\). This argument extends to general theta functions with characteristics where we have the relation 
\begin{align}\label{eqn:sigma shift}
	\sigma(z - a \tau - b) = C_{a,b}e^{A_{a,b} z + B_{a,b}z^2}\thetachar{\frac{1}{2}+a}{b}(z).
\end{align}
By comparing the known values of \(\sigma(z)\) and \(\thetachar{a}{b}(z)\) it is possible to pin down the precise values of the constants \(A_{a,b},B_{a,b},C_{a,b}\), but for our purposes the values can be left unspecified. 

We are primarily interested in using this relation between the Weierstrass \(\sigma\)-function and theta functions with characteristics in the specific case of relating the intertwining vectors mentioned above. 

Let \(\varepsilon_k\) for \(k = 1 \ldots n\) be the standard basis for \(\mathbb C^n\), and let \(\langle \mathbf v , \mathbf w \rangle\) denote the standard inner product on \(\mathbb C^n\) for which \(\langle \varepsilon_i, \varepsilon_j \rangle = \delta_{i,j}\). Denote \(\bar{\varepsilon}_k = \varepsilon_k - \frac{1}{n}\sum_{i=1}^n \varepsilon_i\) the projection of the basis vectors onto the subspace of \(\mathbb C^n\) orthogonal to the diagonal subspace \(\mathbb C \overset{\Delta}{\hookrightarrow} \mathbb C^n\). Finally, for \(\lambda \in \mathbb C^n\), and \(j \in \mathbb Z / n \mathbb Z\) define the intertwining vector
\begin{align}\label{eqn:intertwining def}
	\phi_{\lambda, j}^{\lambda + \eta \bar{\varepsilon}_k}(z) = \thetachar{\frac{1}{2n}-\frac{j}{n^2}}{0}\left(z - n \langle \lambda, \bar{\varepsilon}_k \rangle + \frac{1}{2} \tau\right)
\end{align}

By using equation \eqref{eqn:sigma shift} to transform the definition of the intertwining vectors into the basis of Weierstrass elliptic functions we obtain that:

\begin{lem}\label{lem: elliptic functions}
	Given the above established notation:
	\[
	\phi_{\lambda, j}^{\lambda + \eta \bar{\varepsilon}_k}(z) =\vartheta_j(z) \sigma\left(z - n \langle \lambda, \bar{\epsilon}_k\rangle + \frac{j}{n}\tau\right)
	\]
	where \(\vartheta_j(z)\) is a trivial theta function depending only on \(j\) of the form:
	\[
	\vartheta_j(z) = \frac{1}{\pi \theta_1'(0)}\exp\left(-2 \pi i \left(\frac{-j}{{n}}\right)\left(z+ \frac{1}{2} + \frac{-j \tau}{4 n}\right) + \frac{\eta_1}{2}z^2\right)
	\]
	where \(\eta_1 = -\sigma'(1)/\sigma(1)\). 
\end{lem}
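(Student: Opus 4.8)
The plan is to transform the defining expression \eqref{eqn:intertwining def} directly into the Weierstrass basis using the dictionary \eqref{eqn:sigma shift}, and to absorb every exponential multiplier that appears along the way into the trivial theta factor $\vartheta_j$. Throughout I write $u = z - n\langle\lambda,\bar\varepsilon_k\rangle$, so that the object to be rewritten is $\thetachar{\frac{1}{2n}-\frac{j}{n^2}}{0}\bigl(u + \tfrac12\tau\bigr)$.

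The first step is to remove the half-period shift $+\tfrac12\tau$ from the argument. The elementary translation rule for theta functions with characteristics, $\thetachar{a}{b}(w+\beta\tau) = e^{-\pi i\beta^2\tau - 2\pi i\beta(w+b)}\thetachar{a+\beta}{b}(w)$, applied with $\beta = \tfrac12$, trades this shift for a shift of the first characteristic by $\tfrac12$ together with an explicit Gaussian prefactor in $u$. This puts the first characteristic into the form $\tfrac12 + a$ with $a = \tfrac1{2n} - \tfrac{j}{n^2}$, exactly as required on the right-hand side of \eqref{eqn:sigma shift}. I would then invoke \eqref{eqn:sigma shift} to replace the theta function by $\sigma(u - a\tau)$ times the universal factor $C_{a,0}\,e^{A_{a,0}u + B_{a,0}u^2}$, and finally use the quasi-periodicity $\sigma(x+\tau) = -e^{2\eta_1(x+\tau/2)}\sigma(x)$ to translate the argument of $\sigma$ by the lattice so that it coincides with the target $z - n\langle\lambda,\bar\varepsilon_k\rangle + \tfrac{j}{n}\tau$. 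Every multiplier produced in these three moves --- the Gaussian from the characteristic shift, the factor from \eqref{eqn:sigma shift}, and the $\sigma$ quasi-periodicity multipliers --- is a nowhere-vanishing exponential of a quadratic polynomial in $z$, so their product is a trivial theta function; collecting them is the definition of $\vartheta_j$.

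The remaining, and genuinely delicate, task is to check that these collected factors assemble into the precise closed form claimed for $\vartheta_j$, and that the $\sigma$-argument really lands at $\tfrac{j}{n}\tau$. I would isolate three pieces. The quadratic coefficient must come out to $\tfrac{\eta_1}{2}$: this is forced because the $u^2$-term $B_{a,0}$ in \eqref{eqn:sigma shift} is independent of the characteristics and is pinned down by matching the $\tau$-quasi-periodicity multiplier of $\sigma$ (which carries the factor $e^{2\eta_1(\cdots)}$) against that of the theta function (which carries no quadratic exponential). The constant and the linear exponent are then determined by comparing leading behaviours at the common simple zero, using $\sigma(x)\sim x$ and $\theta_1(x)\sim\theta_1'(0)\,x$ near $x=0$, which produces the normalization $\tfrac{1}{\pi\theta_1'(0)}$ and the linear term governed by $\tfrac{j}{n}$. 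I expect the main obstacle to be precisely the half-integer arithmetic in the characteristic and the $\tau$-translation: one must verify that the shift of the first characteristic by $\tfrac12$ and the subsequent integer $\tau$-translation conspire so that the two sides genuinely share their zero set. A clean way to organize and cross-check this is to observe at the outset that both sides are entire with identical quasi-periodicity factors under $z\mapsto z+1$ and $z\mapsto z+\tau$; their ratio is then automatically a nowhere-vanishing doubly quasi-periodic function, i.e.\ a trivial theta function of the predicted shape, which reduces the whole problem to evaluating the three constants $A_{a,0}$, $B_{a,0}$, $C_{a,0}$ and confirms that no zeros are lost in the process.
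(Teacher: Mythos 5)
Your overall strategy --- push the defining expression \eqref{eqn:intertwining def} through the dictionary \eqref{eqn:sigma shift} and sweep every exponential multiplier into a trivial theta factor --- is exactly the paper's: the paper offers no proof beyond the sentence ``by using equation \eqref{eqn:sigma shift} to transform the definition of the intertwining vectors,'' and your three moves (the half-period translation rule, which you state correctly; then \eqref{eqn:sigma shift}; then lattice translations of \(\sigma\)) are the natural way to carry that out.

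The genuine gap is the step you defer as ``delicate'': read against the paper's formulas taken literally, it does not close, and no evaluation of the constants \(A_{a,0}, B_{a,0}, C_{a,0}\) can fix it. After your first two moves the argument of \(\sigma\) is \(z - n\langle\lambda,\bar{\varepsilon}_k\rangle - \left(\tfrac{1}{2n} - \tfrac{j}{n^2}\right)\tau\), so reaching the claimed argument \(z - n\langle\lambda,\bar{\varepsilon}_k\rangle + \tfrac{j}{n}\tau\) requires a further translation by
\[
\left(\frac{j(n-1)}{n^2} + \frac{1}{2n}\right)\tau,
\]
whose coefficient lies strictly in \((0,1)\) for every \(0 \leq j \leq n-1\); this is never a lattice vector, so the quasi-periodicity of \(\sigma\), which only governs translations by \(\Lambda = \mathbb Z + \mathbb Z\tau\), cannot supply it. The same obstruction defeats your proposed cross-check: by the paper's own description of the zeros of \(\thetachar{a}{b}\), the left-hand side vanishes on \(n\langle\lambda,\bar{\varepsilon}_k\rangle + \left(\tfrac{1}{2n}-\tfrac{j}{n^2}\right)\tau + \tfrac12 + \Lambda\) (note the stray \(\tfrac12\), forced by the second characteristic being \(0\) rather than \(\tfrac12\)), while the right-hand side vanishes on \(n\langle\lambda,\bar{\varepsilon}_k\rangle - \tfrac{j}{n}\tau + \Lambda\); these sets differ by a non-lattice vector, so the two sides do not share a zero set, their ratio is not entire, and it cannot be a trivial theta function. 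The missing ingredient is a convention the paper suppresses: the intertwining vectors \eqref{eqn:intertwining def} are Hasegawa's, whose theta functions are taken at modular parameter \(n\tau\) --- that is, quasi-periodic for the lattice \(\mathbb Z \oplus \mathbb Z\, n\tau\) --- and it is only relative to that larger lattice that the characteristic arithmetic and fractional shifts like \(\tfrac{j}{n}\tau\) can be reconciled. Your argument has to be rerun with respect to that lattice (which changes which translations count as ``integral,'' the quasi-periodicity multipliers, and hence the shape of \(\vartheta_j\)) before the constant-matching you outline at the common zero can even be set up.
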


\subsection{Recollections on the Ruijsenaars Schneider system} \label{subsec: recollections on RS}
We now quickly recall the various Lax matrices of the RS system. The first is the matrix introduced by Ruijsenaars in his proof of integrability of the RS system in \cite{MR851627}. The original proof of integrability came from explicitly writing the integrals of motion without giving a matrix generating them. It was only in the second proof of integrability in loc. cit. that Lax matrix was introduced that generates the integrals of motion. Indeed any matrix with the correct eigenvalues could serve as a candidate for the Lax matrix, but the natural solution that comes to the authors is 
\[
(L_{RS}')_{i,j} = e^{\theta_i} \prod_{l \neq i} f(q_i - q_l) \frac{\sigma(q_i - q_j + \lambda)\sigma(\mu)}{\sigma(\lambda)\sigma(q_i - q_j + \mu)}. 
\]
Where we now define the terms appearing in the above product. First \(\lambda\) is the spectral paraemeter, and \(\mu\) is an additional parameter appearing in the definition of the RS system which in our case will be related to the choice of \(\sigma \in \Jac E\), and finally
\[
f^2(q) = \sigma(\mu)^2(\wp(\mu) - \wp(q)).
\]

This first RS Lax matrix is not ideal for the geometric perspective of this paper. Specifically, the square root that appears in the definition suggests that the Lax matrix is defined on a double cover of \(E\). Though as described in Remark \ref{rmk: alt Lax matrix} and \cite{Vasilyev:2018cyt} it is possible to absorb the roots into a redefinition of \(\theta_i\), an alternative Lax matrix for the RS system introduced in \cite{Hasegawa1997} will naturally appear in the geometric perspective of this paper. With this definition of \(L_{RS}'\) we get that 
\[
H_{RS} = \Tr( L_{RS}' + {L_{RS}'}^{-1})
\]
with \(H_{RS}\) the RS Hamiltonian defined in the introduction. 

The origin of this second Lax matrix came from the study of the quantum RS system defined in terms of commuting difference operators. The classical parts of these operators are generated as symmetric functions of a single matrix \(L'_{RS}\) that was noticed to arise in a geometric perspective by \cite{math/9909079} as coming from elementary modifications of vector bundles on an elliptic curve. Explicitly the matrix \(L_{RS}\) is given by
\[
(L_{RS})_{i,j} = \frac{\theta_1(\mu + \lambda + q_j - q_i)}{\theta_1(\lambda)}\prod_{k \neq j} \frac{\theta_1(\mu + q_k - q_i)}{\theta_1(q_k - q_j)}\exp{P_i} 
\]

By the remarks in the previous section we can easily translate this formula for the Lax matrix into one that is defined in terms of the Weierstrass \(\sigma\)-function by introducing factors of trivial theta-functions. 

The Lax matrix \(L_{RS}\) is the natural matrix that appears in the study of the RS system in terms of the RS spectral sheaves of this paper, so it will be the form of the Lax matrix of primary interest in this paper. Hasegawa introduces the matrix \(\Phi(\lambda)\) and proves that up to a constant, one can recover the original Ruijsenaars Lax matrix from \(L_{RS}\) from conjugation by \(\Phi(\lambda)^{1/2}\). We do not have a geometric description of \(\Phi(\lambda)\), so this connection with the Ruijsenaars Lax matrix is not entirely geometrically understood. On the other hand, a relation between \(L'_{RS}\) and \(L_{RS}\) is recovered in a different way as describe in Remark \ref{rmk: alt Lax matrix}.

Finally, the last Lax matrix we will consider is that of Krichever defined in the study of the 2D Toda-RS duality \cite{MR1379076}. 
\[
(L''_{RS})_{i,j} = \frac{\sigma(\lambda + q_i - q_j)}{\sigma(\lambda + \mu)\sigma(q_i - q_j - \mu)}\left[\frac{\sigma(z - \mu)}{\sigma(z + \mu)} \right]^{(q_i - q_j - \mu)/(2 \mu)}
\]
This Lax matrix is also related to the Hasegawa Lax matrix as described in section 5.2 of \cite{Hasegawa1997}, this time as a coefficient of \(\hbar^1\) in the \(\hbar \rightarrow 0\) limit of \(\tilde{L}'_{RS}\). Where now \(\tilde{L}'_{RS}\) is given explicitly as a conjugation of \(L'_{RS}\) by the matrix of intertwining vectors. We will not consider \(L''_{RS}\) in this paper, but we provide its definition for completeness.

\section{The RS phase space} \label{sec:RS phase space}
In this section we introduce the main object of study, the phase space of the completed spin Ruijsenaars-Schneider system. Before we get to the full spin system, we begin with a preliminary definition which will help in understanding the geometry of the RS system. Throughout let $E$ be a Weierstrass cubic curve. Recall that Weierstrass cubics come in three flavors, all of which are isomorphic to one of the following three curves in $\PP^2$:
\begin{itemize}
	\item Smooth, defined by the equation \(zy^2 = x(x-1)(x-\lambda)\) for \(\lambda \neq 0\) or \(1\).
	\item Nodal, defined by the equation \(z y^2 = x^2(x-1)\).
	\item Cuspidal, defined by the equation \(z y^2 = x^3 \). 
\end{itemize}

We will refer to these three cases as the \emph{elliptic}, \emph{trigonometric}, and \emph{rational} cases respectively. The terminology comes from considering the ring of functions on the universal cover \(\mathbb C\) which descend to \(E\).

The smooth locus of \(E\) also inherits the structure of an algebraic group after fixing a point $b \in E^{sm}$. For \(E\) smooth, the points of \(E\) have the usual structure of the 1-dimensional algebraic group corresponding to the elliptic curve \(E\). For \(E\) nodal, \(E^{sm} \cong \mathbb C^\times\) as an algebraic group under multiplication, and for \(E\) cuspidal, \(E^{sm} \cong \mathbb C\) as an algebraic group under addition. 

Denote \(\Jac E\) the Jacobian of \(E\). In all three of the above cases, \(\Jac E \cong E^{sm}\) as algebraic groups. Furthermore let \(\overline{\Jac} \, E\) denote the compactified Jacobian. This is the moduli space of rank 1 torsion free sheaves of degree 0 on \(E\). In all three cases, we also have that \(E \cong \overline{\Jac} \, E\). Let \(\sigma \in \Jac E \cong E\) correspond to the line bundle \(\mathcal L_\sigma\), and let \(S_\sigma = \PP(\OO \oplus \mathcal L_\sigma)\overset{p}{\rightarrow}E\) be the associated \(\PP^1\) bundle over \(E\). By a curve in \(S_\sigma\) we mean a Cartier subscheme of \(S_\sigma\).

\begin{defn} \label{def:RS spectral curve}
	An RS spectral curve is a complete curve \(\Sigma \subset S_\sigma\) for which the restriction of the projection \(p |_\Sigma:\Sigma \lra E\) is a finite covering map.
\end{defn}

%\begin{rmk}
%	This moduli space is representable, as the sheaves \(F\) in \(\mathsf{RS}_{\sigma, n}(E)\) are a locally closed subset of the Quot scheme with Hilbert polynomial \(h(t) = 0 t^2 + n t + 1\)\fixthis{figure it out}.
%\end{rmk}

The surface \(S_\sigma\) has two sections \(E_0\) and \(E_\infty\) corresponding to the two projections \(\OO \oplus \mathcal L_\sigma \rightarrow \OO\) and \(\OO \oplus \mathcal L_\sigma \rightarrow \mathcal L_\sigma\) respectively. This yields the decompositions of \(S_\sigma\):
\[
S_\sigma = S_\sigma^0 \cup E_\infty =  E_0 \cup S_\sigma^\infty = E_0 \cup S_\sigma^* \cup E_\infty
\]
into affine bundles \(S_\sigma^0\), \(S_\sigma^\infty\) and a \(\mathbb C^* \) bundle \(S_\sigma^*\). RS specral curves being complete cannot lie entirely in the affine bundles \(S_\sigma^0\) and \(S_\sigma^\infty\) and hence will intersect the sections \(E_0\) and \(E_\infty\) at two divisors \(D_0\) and \(D_\infty\) on \(E\). The geometry of the surface \(S_\sigma\) guarantees a relation between the two intersection divisors.

\begin{notn}
	For a divisor \(D = \sum_q n_q q\) in \(E\), denote \(F_D = \sum_q n_q F_q\), where \(F_q\) is the divisor of the fiber above \(q\in E\). In particular, \(p^* \OO_E(D) \cong \OO_{S_\sigma}(F_D)\). 
\end{notn}

\begin{lem} \label{lem:divisor relation}
	For an RS spectral curve \(\Sigma\) for which the projection \(p|_\Sigma\) is generically \(n\) to \(1\), the divisor classes of intersections \(D_0\) and \(D_\infty\) are related by the equation
	\begin{equation}\label{eqn:divisor relation}
			D_0 = D_\infty + n (\sigma - b).
	\end{equation}
\end{lem}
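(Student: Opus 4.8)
The plan is to read off the relation between \(D_0\) and \(D_\infty\) from the class of \(\Sigma\) in \(\Pic(S_\sigma)\). Since \(S_\sigma = \PP(\OO \oplus \mathcal L_\sigma) \xrightarrow{p} E\) is a \(\PP^1\)-bundle over the curve \(E\), its Picard group splits as \(\Pic(S_\sigma) \cong \mathbb Z\,[E_0] \oplus p^*\Pic(E)\), the second factor being generated by the fiber classes \(F_D\). As \(\Sigma\) is assumed to be a Cartier subscheme it has a well-defined class, which I would write as \([\Sigma] = m\,[E_0] + F_B\) for an integer \(m\) and a divisor class \(B\) on \(E\). Intersecting against a fiber \(F\) and using \(F^2 = 0\), \([E_0]\cdot F = 1\), together with the hypothesis that \(p|_\Sigma\) is generically \(n\)-to-\(1\) (so \(\Sigma \cdot F = n\)), fixes \(m = n\).

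The geometric input that distinguishes the two sections is the twisting of \(S_\sigma\), which I would encode in two facts. First, \(E_0\) is the zero section of the affine bundle \(S_\sigma^0\), the total space of a line bundle with \(N_{E_0/S_\sigma} \cong \mathcal L_\sigma\), so that \(\OO_{S_\sigma}(E_0)|_{E_0} \cong \mathcal L_\sigma\) under the identification \(E_0 \cong E\). Second, \(E_0\) and \(E_\infty\) are disjoint sections, so \(\OO_{S_\sigma}(E_0)|_{E_\infty}\) is trivial. Verifying the normal-bundle computation — equivalently, tracking the restriction of the tautological bundle \(\OO_{S_\sigma}(1)\) to each section with the correct sign — is the one place where conventions must be handled with care, and it is really the crux of the argument; I would confirm it by pulling back the tautological sequence on \(\PP(\OO \oplus \mathcal L_\sigma)\) along \(E_0\) and \(E_\infty\) and reading off the induced quotient/sub line bundle.

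Granting these, the conclusion is immediate. Because \(p|_\Sigma\) is finite, \(\Sigma\) contains no fiber and neither section as a component, so it meets \(E_0\) and \(E_\infty\) properly and \(\OO_{S_\sigma}(\Sigma)|_{E_i} \cong \OO_E(D_i)\) under \(E_i \cong E\). Restricting \(\OO_{S_\sigma}(\Sigma) \cong \OO_{S_\sigma}(E_0)^{\otimes n} \otimes p^*\OO_E(B)\) to \(E_0\) gives \(\OO_E(D_0) \cong \mathcal L_\sigma^{\otimes n} \otimes \OO_E(B)\), while restricting to \(E_\infty\) gives \(\OO_E(D_\infty) \cong \OO_E(B)\) since the \(E_0\)-contribution is trivial there. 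Comparing the two, the common factor \(\OO_E(B)\) cancels and \(\OO_E(D_0) \otimes \OO_E(D_\infty)^{-1} \cong \mathcal L_\sigma^{\otimes n} \cong \OO_E(n(\sigma - b))\), which is exactly the asserted relation \(D_0 = D_\infty + n(\sigma - b)\).

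The main obstacle is thus not the formal computation but the sign bookkeeping of the second paragraph: one must be sure that it is \(E_0\) (and not \(E_\infty\)) whose normal bundle is \(\mathcal L_\sigma\) rather than \(\mathcal L_\sigma^{-1}\), since the opposite choice would produce \(D_0 = D_\infty - n(\sigma - b)\). This is pinned down by the convention fixing \(E_0\), \(E_\infty\) via the two projections of \(\OO \oplus \mathcal L_\sigma\) and by the identification \(\sigma \leftrightarrow \mathcal L_\sigma = \OO_E(\sigma - b)\) from Section \ref{subsec: line bundles}. Everything else reduces to the splitting of \(\Pic(S_\sigma)\) and the finiteness of \(p|_\Sigma\).
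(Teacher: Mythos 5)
Your proof is correct and takes essentially the same route as the paper: both decompose the class of \(\Sigma\) in the divisor class group of the ruled surface (coefficient \(n\) pinned down by intersecting a fiber), both use the identification \(\mathcal N_{E_0/S_\sigma} \cong \mathcal L_\sigma\) (equivalently \(E_0 = E_\infty + F_\sigma - F_b\)) together with \(E_0 . E_\infty = 0\), and both conclude by comparing the restrictions to the two sections. The only cosmetic differences are that you expand in the basis \([E_0]\) and phrase the comparison as restrictions of \(\OO_{S_\sigma}(\Sigma)\), while the paper expands in \([E_\infty]\) and computes intersection products, with identical sign conventions.
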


\begin{proof}
Before we can relate the divisors \(D_0\) and \(D_\infty\) we first recall the geometry of the divisor class group of \(S_\sigma\). 

Because \(S_\sigma\) is a \(\PP^1\) bundle on E, it is known that the divisor class group of \(S_\sigma\) is isomorphic to \(\mathrm{Cl}(E) \oplus \mathbb Z [\lambda]\) where \(\lambda\) is a class of a section of \(S_\sigma\). In particular, the divisor classes \(E_0\) and \(E_\infty\) cannot be independent and are related by \[E_0 = E_\infty + F_D\] for some divisor \(D \in \mathrm{Cl}(E)\). 

The divisor \(D\) can be recovered by considering the self-intersection \[E_0.E_0 = E_0.E_\infty + E_0.F_D = 0 + E_0.F_D.\] This equation implies that the divisor \(D\) is simply the class of the normal bundle of \(E_0\) in \(S_\sigma\). Being a zero-section of the vector bundle \(S_\sigma^0 = \mathrm{Tot}(\mathcal L_\sigma)\) implies that the normal bundle \(\mathcal N_{E_0/S_\sigma}\) is equal to the line bundle \(\mathcal L_\sigma\), which corresponds to the divisor class \((\sigma - b)\) after a choice of basepoint \(b \in E\) is chosen. Hence we obtain that 
\begin{align}\label{eqn:sections relation}
	E_0 = E_\infty + (F_\sigma - F_b).
\end{align}

Returning to the intersection divisors \(D_0\) and \(D_\infty\), an RS spectral curve \(\Sigma\) satisfying the assumptions of the lemma will lie in the divisor class group \(|n E_\infty+ F_{D_0}|\), and we can calculate the intersection with \(E_\infty\) using the relation \eqref{eqn:divisor relation}:
\[D_\infty = (n E_\infty+ F_{D_0}).E_\infty = n E_\infty . E_\infty + D_0 = n(b - \sigma) + D_0\]
which yields the desired result.
%	First note that the divisor class group of \(S_\sigma\) is generated by two classes \(F_b\), the fiber over the basepiont \(b \in E^{sm}\), and \(E_0\)--alternatively \(E_\infty\) which is linearly equivalent. These are related by the intersection form:
%	\begin{align*} \label{eqn:intersection form}
%		E_0 . E_0 = F_b . F_b = 0 \\
%		E_0 . F_b = 1 
%	\end{align*}
%	Begin by restricting to the case when the divisors \(D_0\) and \(D_\infty\) are points \(p_0\) and \(p_\infty\) in \(E\). The spectral curve \(\widetilde \Sigma\), the support of \(\mathscr F\), lies in the linear series \(|n E_0 + F_b|\). This can be seen by considering the intersection of \(\widetilde \Sigma\) with the two generating divisors. As a result, calculating the intersection with \(E_0\) and \(E_\infty\) gives you the desired result \fixthis{obviously figure this out lol}

%	For general intersections in effective divisors \(D_0\) and \(D_\infty\), point by point the relation given by equation \ref{eqn:divisor relation} holds, and hence we get as a consequence that \(\deg D_0 = \deg D_\infty\) and that the two divisors are related by the desired shift. \fixthis{figure this out}
\end{proof}

In order to treat the general spin Ruijsenaars-Schneider system we will consider \emph{framed spectral sheaves} on \(S_\sigma\). These will be sheaves supported on RS spectral curves together with a framing at \(E_0\) and \(E_\infty\). 

\begin{defn}\label{def: framing}
	Let \(V\) be a sheaf on \(S_\sigma\) supported scheme-theoretically on the divisor \(E_0 \cup E_\infty\) which is torsion and supported on the smooth locus \(E_0^{sm} \cup E_\infty^{sm}\). For a sheaf \(\mathscr F\) supported on an RS spectral curve, a \emph{framing isomorphism} is an isomorphism:
	\[
	\varphi: \mathscr F|_{E_0 \cup E_\infty} \overset{\sim}{\lra} V.
	\] 
	We may refer to the sheaf \(V\) without reference to \(\varphi\) as simply the \emph{framing} of \(\mathscr F\)
\end{defn}

The support of V being contained on a disjoint union of sections, allows us to decompose \(V\) into the direct sum \(V = V_0 \oplus V_\infty\) with \(V_0\) and \(V_\infty\) torsion sheaves supported on \(E_0\) and \(E_\infty\) respectively. By Lemma \ref{lem:divisor relation}, the divisor classes of the support of \(V_0\) and \(V_\infty\) are constrained to be shifted by the factor \(n(b - \sigma)\) for a sheaf supported on an n-fold cover RS spectral curve.

We now turn to the definition for what will be the phase space for the RS system.

\begin{defn}\label{def: RS phase space}
	For a fixed \(\sigma \in \Jac E\), \(n \in \mathbb N\), and framing sheaf \(V\) let \(\mathsf{RS}_{\sigma,n}(E,V)\) be the moduli space of sheaves \(\mathscr F\) on \(S_\sigma\) sastisfying:
	\begin{itemize}
		\item \(\mathscr F\) is a sheaf of pure dimension 1, supported on an RS spectral curve \(\Sigma\).
		\item \(\mathscr F\) admits a framing \(\varphi:\mathscr F|_{E_0 \cup E_\infty} \overset{\sim}{\lra} V\).
		\item The sheaves \(W_k = p_* \mathscr F(-kE_\infty)\) are rank \(n\), degree \((k+1) \deg V_\infty\), vector bundles on \(E\) for all \(k \in \mathbb Z\). Furthermore, \(W_{-1}\) is semi-stable.
		\item If \(E\) is singular, the pullback of \(W_{-1}\) to the normalization is a trivial vector bundle.
	\end{itemize}
	Such sheaves are called \emph{framed RS spectral sheaves}.
\end{defn}

We call framed RS spectral sheaves with framing \(V = \OO_{p_0} \oplus \OO_{p_\infty}\) \emph{spinless} RS spectral sheaves, and when \(V = \OO_{p_0}^{k} \oplus \OO_{p_\infty}^k\) we call them \emph{spin} RS spectral sheaves. 

\begin{rmk}\label{rmk:framing}
	Note that by Lemma \ref{lem:divisor relation} the varieties \(\mathsf{RS}_{\sigma, n}(E, V)\) are empty unless the sheaves \(V_0\) and \(V_\infty\) have the same length, and their divisors of support are related by equation \eqref{eqn:divisor relation}. In both the spin and spinless cases the sheaves \(V_0\) and \(V_\infty\) are simply translates of each other by \(n(\sigma - b)\), though there are examples of spectral sheaves with framings that do not satisfy this translation condition.

	For example consider a curve \(\widetilde{\Sigma}\) in the linear series \(|2E_0 + 2 F_b|\) which has a degree 2 transverse intersection at a point \(q_\infty\) along \(E_\infty\) and an order 2 tangential intersection at a point \(q_0\) along \(E_0\) where \(q_\infty\) and \(q_0\) are the appropriate shift of each other. The dimension of the linear series \(|2 E_0 + 2 F_b|\) has dimension \(\geq 4\), and hence such a curve exists. 

	In a sufficiently small open set near \(q_\infty \in E_\infty\), with the divisor \(E_\infty\) defined by \(y = 0\), the structure sheaf of the curve is given by \(\mathbb C[x, y]/((x+y)(x-y))\) which restricts to \(E_\infty\) to give the \(\mathbb C[x]/(x^2)\). Similarly the structure sheaf near the tangential intersection will also restrict to give the non-split length 2 torsion sheaf. On the other hand, if \(p:\widehat{\Sigma} \rightarrow \widetilde{\Sigma}\) is the normalization of \(\widetilde{\Sigma}\) then \(\mathcal G = p_*(\OO_{\widehat{\Sigma}})\) will be a sheaf on \(\widetilde{\Sigma}\) which is isomorphic away from the point \(q_\infty\). Therefore we can conclude that \(\mathcal G|_{q_0}\) is still isomorphic to the non-split extension.

	On the other hand at \(q_\infty\), we claim \(\mathcal G|_{q_\infty}\) is isomorphic to the split length 2 torsion sheaf supported at \(q_\infty\). In fact \(\OO_{\widehat{\Sigma}}\) restricted to each of the two points \(q_1, 1_2\) mapping to \(q_\infty\) on \(\widetilde{\Sigma}\) will give a 1 dimensional vector space with a trivial \(\OO_{E_\infty, q_\infty}\) action, and hence the full pushforward of \(\mathcal G|_{q_\infty}\) will decompose as a direct sum. 

	%For example consider the Atiyah bundle \(\mathcal A\) on \(E\). We go into more detail about Atiyah bundle in section \ref{subsec:Universal RS system}, but for now the important property of \(\mathcal A\) is that it is an indecomposable rank 2 vector bundle on \(E\) which splits away from the basepoint \(b \in E\). Now choose a non-zero section \(s\) of \(S_\sigma \lra E\) which takes \(b\) to the intersection of \(s(E) \cup E_0\). 

	%The point \(b -(\sigma - b)\) will get sent to the intersection with \(E_\infty\). \(s_* \mathcal A\) will then get sent to a rank 2 bundle on an RS spectral curve in the linear series \(|E_\infty + F_b|\) which is framed at \(E_\infty\) by the trivial length 2 torsion sheaf. On the otherhand, because \(\mathcal A\) does not split near \(b\) the restriction to \(E_0\) will be the non-split length 2 torsion sheaf. \fixthis{I'm not sure this actually works...}

	Most of the theorems for the framed RS system will be unaffected by this complication, but when the assumption that \(V_0\) and \(V_\infty\) are shifts is necessary we will explicitly state it. 
\end{rmk}

\subsection{From RS spectral sheaves to Higgs bundles}
To connect the moduli space \(\mathsf{RS}_{\sigma, n}(E,V)\) with RS phase spaces, we first turn to geometrically constructing the RS Lax matrix from RS spectral sheaves. The Lax matrix will appear as meromorphic twisted Higgs fields on the bundles \(W_k\) obtained from \(\mathsf{RS}_{\sigma, n}(E, V)\). Before we tackle the Higgs field description of \(\mathsf{RS}_{\sigma, n}(E, V)\), we recall the classical story for \(GL_n\)-Higgs bundles on a complex curve \(\Sigma\) and their associated spectral curves.
\subsubsection*{\(GL_n(\mathbb C)\)-Higgs bundles}
The moduli space \(\mathcal M_{Higgs}(\Sigma)\) of Higgs bundles on a complex curve \(\Sigma\) parametrizes pairs \((\mathcal E, \phi)\) of a vector bundle \(\mathcal E\) together with a morphism \(\mathcal E \overset{\phi}{\rightarrow} \mathcal E \otimes K \). There are Koszul dual coordinates on \(\mathcal M_{Higgs}(\Sigma)\) given by pairs \((\widetilde{\Sigma}, \mathscr L)\) of a curve \(\widetilde{\Sigma} \subset T^* \Sigma\) for which the restriction of the projection map is a ramified covering, and a line bundle \(\mathscr L \) on \( \widetilde{\Sigma}\). 

Given a spectral curve and line bundle \((\widetilde{\Sigma}, \mathscr L)\), this classical Koszul duality is obtained by setting the vector bundle to be \(\mathcal E = \pi_* \mathscr L\) where \(\pi: \widetilde{\Sigma} \lra \Sigma\) is the restriction of the projection from \(T^* \Sigma\) to \(\widetilde{\Sigma}\), and the morphism \(\phi:\mathcal E \lra \mathcal E \otimes K\) comes from the action of \(\OO_{T^*\Sigma}\) on \(\mathscr L\). In particular, the vector bundle \(\OO_{T^*\Sigma, \leq 1}\) of sections of degree \(\leq 1\) in the fibers of \(T^*\Sigma\) pushes forward to \(K^\vee\). Therefore the action 
\[
\OO_{T^* \Sigma, \leq 1} \otimes \mathscr L \lra \mathscr L
\]
pushes forward to the morphism of vector bundles on \(\Sigma\):
\[
K^\vee \otimes \mathcal E \lra \mathcal E
\]
which we identify with \(\phi\). Conversely, the data of a pair \((\mathcal E, \phi)\) is enough to reconstruct the spectral curve together with a line bundle. If \(x \in H^0(T^* \Sigma, \pi^* K)\) is the unique tautological section linear in the fibers of \(T^*\Sigma\), then \(\widetilde{\Sigma}\) is given as the locus of vanishing of the polynomial \(\det(x Id - \pi^* \phi)\). The line bundle is furthermore obtained by considering \(\coker \pi^* \phi \), which will be supported on \(\widetilde{\Sigma} \) and generically rank 1. 

The moduli space \(\mathcal M_{Higgs}(\Sigma)\) is a complex symplectic variety, and the Koszul dual coordinates \((\mathcal E, \phi)\) and \((\widetilde{\Sigma}, \mathcal L)\) are both symplectic coordinates. The coordinates\((\widetilde{\Sigma}, \mathscr L)\) can be seen as the action-angle coordinates for the natural Hitchin integrable system that exists on \(\mathcal M_{Higgs}(\Sigma)\). Specifically, let
\[
\det(x Id - \pi^* \phi) = x^n - H_1(\phi) x^{n-1} + \ldots (-1)^{n-1} H_{n-1}(\phi) x + (-1)^n H_n(\phi)
\]
for sections \(H_i \in H^0(\Sigma, K^{\otimes i})\), and define the Hitchin base
\[
\mathcal B_{Higgs}(\Sigma) = \bigoplus_{i=0}^{n-1} H^0(\Sigma, K^{\otimes i}).
\]
Then \(\mathcal M_{Higgs}(\Sigma)\) has an natural map to \(\mathcal B_{Higgs}(\Sigma)\) given by
\[
\mathbf{H}(\mathcal E, \phi) = (H_1(\phi), \ldots,H_{n-1}(\phi)).
\]
The fibers of \(\mathbf{H}\) are the Jacobians of the spectral curves \(\widetilde \Sigma\), realizing \(\mathcal M_{Higgs}(\Sigma) \overset{\mathbf H}{\rightarrow} \mathcal B_{Higgs}(\Sigma)\) as an algebraically completely integrable Hamiltonian system. The coordinates \((\widetilde \Sigma, \mathscr L)\) are then the action-angle coordinates for this integrable system. 
\begin{rmk}
	The reason for describing the pair of coordinates as \emph{Koszul dual} is that for higher dimensional varieties \(X\) the map \(\mathcal E \overset{\phi}{\rightarrow} \mathcal E \otimes K\) extends to a complex
	\[
	\mathcal E \overset{\phi}{\lra} \mathcal E \otimes K \overset{\wedge \phi}{\lra} \mathcal E \otimes \bigwedge^2 K \overset{\wedge \phi}{\lra} \ldots \overset{\wedge \phi}{\lra} \mathcal E \otimes \bigwedge^{\dim X} K
	\]
	realizing \(\mathcal E\) as a graded \(\bigwedge^\bullet K\)-module. 

	Dually, by living in \(T^* \Sigma\), \(\mathscr L\) is naturally a graded module for the symmetric algebra \(S^\bullet K \). The transformation between the pairs of coordinates \((\mathcal E, \phi)\) and \((\widetilde \Sigma, \mathscr L)\) can be realized as the classical Koszul duality for modules between the symmetric and alternating algebras. This perspective on Higgs bundles is expanded in more detail in \cite{duality}.
\end{rmk}

\subsubsection*{Hitchin description of moduli of RS-spectral sheaves} Taking this classical picture as our guide, the definition of \(\mathsf{RS}_{\sigma, n}(E, V)\) is analogous to the coordinates given by spectral curves and line bundles on the moduli space of Higgs bundles. In particular, there exists a locus of sheaves in \(\mathsf{RS}_{\sigma, n}(E, \OO_{p_0} \oplus \OO_{p_\infty})\) that are of the form \(i_* \mathscr L\) where \(i: \widetilde \Sigma \hookrightarrow S_\sigma\) is the closed embedding of and RS spectral sheaf \(\widetilde \Sigma\), and \(\mathcal L\) is a line bundle on \(\widetilde{\Sigma}\). 

Even in the more general setting for a pure 1-dimensional sheaf \(\mathscr F\) on \( S_\sigma\), there exists a Koszul dual description:

\begin{prop} \label{prop:Koszul duality}
	The points of the moduli space \(\mathsf{RS}_{\sigma, n}(E, V)\) are in bijection with the following data:
	\begin{enumerate}
		\item A pair of semi-stable rank \(n\), degree \(0\) vector bundles \(W\) and \(W'\) on \(E\).
		\item A pair of short exact sequences 
		\begin{align}\label{eqn:pair of ses}
		0 \lra W \overset{\iota_0}{\lra}  W' \lra V_\infty \lra 0 \\
		0 \lra W \otimes \mathcal L_\sigma^{-1} \overset{\iota_\infty}{\lra}  W' \lra V_0 \lra 0	
		\end{align}
		%\item An action map \(a: W \otimes (\OO_E \oplus \mathcal L_\sigma^{-1}) \lra W'\) determined by \(\iota_0\).
	\end{enumerate}
	Subject to the relation that the morphisms \(\iota_0 \circ \iota_\infty^{-1} : W \lra W \otimes \mathcal L_\sigma^{-1}\) and \(\iota_\infty \circ \iota_0^{-1}: W \otimes \mathcal L_\sigma^{-1} \lra W\) are inverses on the open set \(U = E \setminus (D_0 \cup D_\infty)\).
\end{prop}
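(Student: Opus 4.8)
The plan is to prove this as a Beauville--Narasimhan--Ramanan-type spectral correspondence for the ruled surface \(p:S_\sigma\to E\), exploiting that \(S_\sigma\) is covered by the two line-bundle total spaces \(S_\sigma^0=\Tot(\mathcal L_\sigma)\) and \(S_\sigma^\infty=\Tot(\mathcal L_\sigma^{-1})\), glued along the \(\mathbb C^\times\)-bundle \(S_\sigma^*\). I would build the forward map by pushing \(\mathscr F\) and its twists down to \(E\). Since the support of \(\mathscr F\) is finite over \(E\), one has \(R^{>0}p_*=0\) on \(\mathscr F\) and all its twists, so \(p_*\) is exact on the sequences below. Restricting \(\mathscr F\) to the two sections gives \(0\to\mathscr F(-E_\infty)\to\mathscr F\to\mathscr F|_{E_\infty}\to 0\) and \(0\to\mathscr F(-E_0)\to\mathscr F\to\mathscr F|_{E_0}\to 0\) (injectivity of the left maps is purity of \(\mathscr F\) together with the torsion of \(V\)); applying \(p_*\) and using the framing \(\varphi\) to identify \(p_*(\mathscr F|_{E_\infty})=V_\infty\) and \(p_*(\mathscr F|_{E_0})=V_0\) produces the two sequences of the proposition with \(W'=p_*\mathscr F\) and \(W=p_*\mathscr F(-E_\infty)\). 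The relation \(\OO_{S_\sigma}(-E_0)=\OO_{S_\sigma}(-E_\infty)\otimes p^*\mathcal L_\sigma^{-1}\) coming from equation \eqref{eqn:sections relation} in Lemma \ref{lem:divisor relation}, together with the projection formula, identifies \(p_*\mathscr F(-E_0)\) with \(W\otimes\mathcal L_\sigma^{-1}\), which is the sub in \(\iota_\infty\). The rank \(n\), the numerical constraints, and the semistability required are exactly the conditions imposed on the bundles \(W_k\) in Definition \ref{def: RS phase space}.

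Next I would address the inverse-on-\(U\) condition. Over \(U=E\setminus(D_0\cup D_\infty)\) the cokernels \(V_\infty,V_0\) vanish, so \(\iota_0\) and \(\iota_\infty\) restrict to isomorphisms; hence the composites \(\iota_\infty^{-1}\circ\iota_0:W|_U\to(W\otimes\mathcal L_\sigma^{-1})|_U\) and its reverse are defined and tautologically inverse. Geometrically this composite is multiplication by the fibre coordinate on \(S_\sigma^*\), i.e. the eigenvalue/gluing datum recording \(\mathscr F|_{S_\sigma^*}\) as the pushforward bundle \(W|_U\) equipped with its \(\mathcal L_\sigma^{-1}\)-twisted automorphism. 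I would stress that this is precisely the compatibility needed for the two chartwise module structures to agree on the overlap, and that it is what will subsequently become the two Higgs fields.

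The substance of the argument is the reverse map. Starting from \((W,W',\iota_0,\iota_\infty)\), the two inclusions define a meromorphic ``multiplication by the fibre coordinate'' \(\theta:=\iota_\infty^{-1}\circ\iota_0:W\to W\otimes\mathcal L_\sigma^{-1}\), regular on \(U\) with poles and zeros along \(D_0\cup D_\infty\) dictated by \(V_0\) and \(V_\infty\). From \(\theta\) I would assemble a \(\mathbb Z\)-graded module by iterating \(\iota_0,\iota_\infty\) (and their inverses over \(U\)) to produce the whole chain \(\{W_k\}_{k\in\mathbb Z}\) of modifications, which is a graded module over the sheaf of algebras \(p_*\bigoplus_k\OO_{S_\sigma}(kE_\infty)\); the sheaf it defines under relative \(\mathrm{Proj}\)/Serre's construction is the desired \(\mathscr F\). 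Equivalently, and more concretely, one glues the \(\OO_{S_\sigma^0}\)-module determined by the Higgs-type action \(\theta_0\) of the fibre coordinate to the \(\OO_{S_\sigma^\infty}\)-module determined by the inverse action \(\theta_\infty\), the inverse condition guaranteeing agreement on \(S_\sigma^*\). One recovers the framing by restricting to \(E_0,E_\infty\), where the poles of \(\theta\) cut out \(V_0,V_\infty\), and finally verifies mutual inverseness: the canonical identification of a finite \(p\)-sheaf with the relative \(\mathrm{Spec}\) of its own pushforward-algebra action shows that reconstructing from the pushforward of a given \(\mathscr F\) returns \(\mathscr F\), while pushing forward the reconstructed sheaf returns \((W,W',\iota_0,\iota_\infty)\).

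The main obstacle is showing that this reconstructed sheaf actually lands in \(\mathsf{RS}_{\sigma,n}(E,V)\): that it is pure of dimension one (no zero-dimensional subsheaf appears along \(D_0\cup D_\infty\)), that its Fitting support is a Cartier RS spectral curve \(\Sigma\) finite over \(E\) rather than a scheme with vertical or embedded components, and that the conditions of Definition \ref{def: RS phase space} on the \(W_k\) (rank, degrees, semistability of the degree-0 member, and triviality of the pullback to the normalization in the singular cases) transport correctly. Purity and the spectral-curve property should follow from the modifications being \emph{elementary} --- torsion cokernels supported on the reduced \(D_0,D_\infty\subset E^{sm}\) --- together with the inverse condition, which prevents the fibre-coordinate action from degenerating along an entire fibre. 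I expect controlling this behaviour over the singular points of \(E\), where ``vector bundle'' must be interpreted through the normalization, to be the most delicate point; once purity and the spectral-curve structure are secured, checking that the two constructions are mutually inverse is largely formal from the standard correspondence for sheaves on total spaces of line bundles.
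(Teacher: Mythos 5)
Your proposal is correct and is essentially the paper's own argument: both proceed by decomposing \(S_\sigma\) into the two line-bundle charts \(S_\sigma^0\), \(S_\sigma^\infty\) glued along \(S_\sigma^*\), pushing \(\mathscr F\) and its twists forward along \(p\) (with equation \eqref{eqn:sections relation} and the projection formula identifying \(p_*\mathscr F(-E_0)\cong W\otimes\mathcal L_\sigma^{-1}\)), reading the relation on \(U\) as the compatibility of the two chartwise module structures, and reconstructing \(\mathscr F\) from \((\iota_0,\iota_\infty)\) in the converse direction. The only difference is cosmetic --- the paper assembles the inverse map as the cokernel of the lifted action maps on \(S_\sigma\) rather than via your graded-module/gluing formulation --- and, like the paper, you leave the purity and spectral-support verifications for the reconstructed sheaf at the level of a sketch.
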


\begin{proof}
	%The proof is going to be something like the above agrument for \(GL_n(\mathbb C)\)-Higgs bundles. In particular you pushforward the sheaves on the two surfaces \(S_\sigma^0\) and \(S_\sigma^\infty\). They have the usual Koszul duality, but then you need to argue that the two sheaves are the same. Also because the sheaves intersect the the two sections at \(D_0\) and \(D_\infty\), the usual map doesn't fix \(\mathcal E\), but instead takes it to \(\mathcal E(D_0)\) and \(\mathcal E(D_\infty)\). \fixthis{actually write this proof up}
	Beginning with an RS spectral sheaf \(\mathscr F\), we can forget about the sections \(E_\infty\) and \(E_0\) and restrict \(\mathscr F\) to the two affine bundles \(S_\sigma^0\) and \(S_\sigma^\infty\). On each of these affine bundles the Koszul duality of \cite{duality} implies we have a bijection between \(\mathscr F|_{S_\sigma^0}\) and the collection of data:
	\begin{enumerate}[(i)]
		\item A pair of vector bundles \(W = W_1 = p_*\mathscr F(-E_\infty)\) and \(W_0 = W' = p_* \mathscr F\).
		\item The short exact sequence of sheaves on \(E\) 
		\begin{align}\label{eqn:infty ses}
			0 \lra W \overset{\iota_0}{\lra} W' \lra V_\infty \lra 0
		\end{align}
		obtained from tensoring the short exact sequence on \(S_\sigma\)
		\[0 \lra \OO_{S_\sigma}(-E_\infty) \lra \OO_{S_\sigma} \lra \OO_{E_\infty} \lra 0\]
		with \(\mathscr F\) and pushing forward to \(E\). 
		\item An action map extending \(\iota_0\), \(a_0: (\OO_E \oplus \mathcal L_\sigma^{-1}) \otimes W \lra W'\) coming from pushing forward the action of a sub-bundle of the structure sheaf \(\OO_{S_\sigma}(E_\infty)_{\leq 1} \otimes \mathscr F(-E_\infty) \lra \mathscr F\). 
	\end{enumerate}
	Whereas restricting to \(S_\sigma^\infty\) we can appeal to the same argument and obtain the bundle \(\widetilde{W} = p_* \mathscr F(-E_0)\) and a short exact sequence 
	\begin{align}\label{eqn:zero ses}
		0 \lra \widetilde{W} \overset{\iota_\infty}{\lra} W' \lra V_0 \lra 0.
	\end{align} 
	But by equation \eqref{eqn:sections relation} and the projection formula we see that \(\widetilde{W} = W \otimes L_\sigma^{-1}\). Furthermore the affine bundle \(S_\sigma^\infty = \Tot L_\sigma^{-1}\), so the corresponding action becomes a map
	\[a_\infty: (\mathcal L_\sigma \oplus \OO_E) \otimes \widetilde{W} \cong (\OO_E \oplus \mathcal L_\sigma^{-1}) \otimes W \lra W'\]
	extending \(\iota_\infty\) on the \(\mathcal L_\sigma^{-1} \otimes W\) summand. 

	Under the identification of \(W \otimes \mathcal L_{\sigma}^{-1}\) with \(\widetilde{W}\), the action \(a_0\) on the direct summand \(\mathcal L_\sigma^{-1} \otimes W\), matches up with the map \(\iota_\infty\) in the short exact sequence \eqref{eqn:zero ses}. Similarly, the action map \(a_\infty\) on the direct summand \(W\) matches up with the map \(\iota_0\) in the short exact sequence \eqref{eqn:infty ses}. Hence the action maps \(a_0\) and \(a_\infty\) are related by a transposition of factors, and are determined by the two short exact sequences. 

	Finally, the two local sections in the \(\mathbb P^1\) fibers of \(S_\sigma\) commute, hence the compositions
	\[
	\mathscr F(- E_\infty) \overset{x_0}{\lra} \mathscr F \overset{x_\infty}{\lra} \mathscr F(E_0)
	\]
	and
	\[
	\mathscr F(- E_\infty) \overset{x_\infty}{\lra} \mathscr F(- E_\infty + E_0) \overset{x_0}{\lra} \mathscr F(E_0)
	\]
	are equal as maps from \(\mathscr F(-E_\infty)\) to \(\mathscr F(E_0)\). These compositions push forward to the two maps
	\[
	\iota_{0,\infty},\iota_{\infty,0}: W \lra W_{-1}\otimes \mathcal L_\sigma
	\]
	which also equal. For our purposes, we choose to repackage the above relation to one on the open set \(U\) on which \(\iota_0\) and \(\iota_\infty\) are invertible. On \(U\) we get that the two maps \(\iota_0 \circ \iota_\infty^{-1} : W \lra W \otimes \mathcal L_\sigma^{-1}\) and \(\iota_\infty \circ \iota_0^{-1}: W \otimes \mathcal L_\sigma^{-1} \lra W\) are inverses.

	Conversely, to obtain an RS spectral sheaf from the pair of short exact sequences, the pair of maps \(\iota_0\) and \(\iota_\infty\) are used to define a pair of action maps \(a_0, a_\infty: (\OO_E \oplus \mathcal L_\sigma^{-1})\otimes W \lra W'\) which commute by the assumptions on \(\iota_0\) and \(\iota_\infty\). The maps \(a_0, a_\infty\) define an action on the pullback of \(W\) from which we can define an RS spectral sheaf \(\mathscr F\) in \(S_\sigma\) with framings at \(E_0\) and \(E_\infty\) by considering the cokernel sheaf of the lift to \(S_\sigma\).
\end{proof}

The action map \(a_0\) or \(a_\infty\) will yield the Higgs field in the Hitchin description of the moduli space \(\mathsf{RS}_{\sigma, n}(E, V)\). The remaining step in obtaining this description follows the argument in \cite{MR2377220}. The end result yields the following description of the moduli space in terms of a pair of prolonged Higgs bundles. We now recall the definition from \cite{duality} and \cite{MR2377220}:

\begin{defn}
	On a scheme \(X\) fix a line bundle \(\mathcal L\), and a torsion sheaf \(V\) supported on a divisor \(D\). A prolonged \(\mathcal L\)-twisted Higgs bundle with framing \(V\) is the tuple \((W, \eta, u, v)\) where:
	\begin{enumerate}
		\item \(W\) is a vector bundle on \(X\).
		\item \(\eta: W \lra (W \otimes \mathcal L)(D)\) is a meromorphic \(\mathcal L\)-twisted Higgs bundle.
		\item \(u: V \lra W(D)/W\) and \(v: W(D)/W \lra V\) are morphisms of torsion sheaves.
	\end{enumerate}
	Subject to the relation on the principal part along \(D\) of the Higgs field \(p.p. \,\, \eta: W/W(-D) \lra W(D)/W\) is related to \(u\) and \(v\) by:
	\[
	p.p. \,\, \eta = u \circ v
	\]
\end{defn}

\begin{thm}\label{thm:higgs description}
	Let \(V\) be an RS framing sheaf with support \(D_0\) on \(E_0\) and \(D_\infty\) on \(E_\infty\) with the property that \(V_0\) and \(V_\infty\) are translates by \(D_\infty - D_0\). The moduli space \(\mathsf{RS}_{\sigma, n}(E,V)\) parametrizes the the data of a semi-stable rank \(n\) vector bundle \(W\) together with a pair of prolonged Higgs fields
	\begin{align*}	
		\eta_0 : W \lra (W \otimes \mathcal L_\sigma^{-1})(D_\infty) \\
		\eta_\infty: W \lra (W \otimes \mathcal L_\sigma)(D_0)
	\end{align*}
	with framing morphisms \((u_0, v_0)\) and \((u_\infty, v_\infty)\) respectively subject to the condition that \[\eta_0(z) = \eta_\infty^{-1}(z + n(\sigma - b)).\]
\end{thm}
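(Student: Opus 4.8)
The plan is to prove this by repackaging the Koszul-dual data furnished by Proposition \ref{prop:Koszul duality} into the language of prolonged Higgs bundles. That proposition already identifies a point of \(\mathsf{RS}_{\sigma,n}(E,V)\) with a pair of semi-stable degree \(0\) bundles \(W, W'\) together with two injections \(\iota_0 : W \lra W'\) and \(\iota_\infty : W\otimes\mathcal L_\sigma^{-1}\lra W'\) whose cokernels are \(V_\infty\) and \(V_0\), and which are mutually inverse (after twisting) on \(U = E\setminus(D_0\cup D_\infty)\). The task is to eliminate the auxiliary bundle \(W'\) and re-express everything as two twisted Higgs fields on the single bundle \(W\), together with framings.

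First I would construct the Higgs fields directly from the two affine charts. On \(S_\sigma^0 = \Tot(\mathcal L_\sigma)\) the tautological fibre coordinate is a section of \(\mathcal L_\sigma^{-1}\), and multiplication by it on \(\mathscr F|_{S_\sigma^0}\) pushes forward to a map \(W \lra W\otimes\mathcal L_\sigma^{-1}\); since this coordinate acquires a pole along the section at infinity \(E_\infty\), the resulting field is regular exactly away from \(D_\infty = \Sigma\cap E_\infty\), giving \(\eta_0 : W\lra (W\otimes\mathcal L_\sigma^{-1})(D_\infty)\). Symmetrically, working on \(S_\sigma^\infty = \Tot(\mathcal L_\sigma^{-1})\) with zero section \(E_\infty\), and using the identification \(p_*\mathscr F(-E_0) \cong W\otimes\mathcal L_\sigma^{-1}\) from the proof of Proposition \ref{prop:Koszul duality}, multiplication by the opposite fibre coordinate produces \(\eta_\infty : W\lra (W\otimes\mathcal L_\sigma)(D_0)\), with poles along \(D_0 = \Sigma\cap E_0\). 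Concretely these fields are the meromorphic composites of \(\iota_0, \iota_\infty\) and their inverses, so their existence is immediate from Proposition \ref{prop:Koszul duality}.

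Next I would verify that \((W,\eta_0, u_0, v_0)\) and \((W, \eta_\infty, u_\infty, v_\infty)\) really are prolonged Higgs bundles, i.e. that the principal parts satisfy \(p.p.\,\eta_0 = u_0\circ v_0\) and \(p.p.\,\eta_\infty = u_\infty\circ v_\infty\). The framing morphisms are extracted from the framing isomorphism \(\varphi:\mathscr F|_{E_0\cup E_\infty}\overset{\sim}{\lra} V\): here \(v_0\) is the evaluation \(W(D_\infty)/W \lra \mathscr F|_{E_\infty} = V_\infty\) recording the leading coefficient of \(\eta_0\) at its poles, while \(u_0\) reinserts that datum via the framing, and similarly for \(\eta_\infty\) at \(D_0\). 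The factorization \(p.p.\,\eta_0 = u_0\circ v_0\) then expresses the elementary fact that the polar part of multiplication by the fibre coordinate is controlled by, and factors through, the restriction \(V_\infty\) — exactly the bookkeeping of \cite{MR2377220}. The hypothesis that \(V_0\) and \(V_\infty\) are translates by \(D_\infty - D_0\) (cf. Remark \ref{rmk:framing}) is what permits these two sets of framing data, a priori attached to different sections, to be transported onto the common bundle \(W\) in a compatible way.

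Finally I would establish the compatibility \(\eta_0(z) = \eta_\infty^{-1}(z + n(\sigma - b))\) and the converse. Conceptually the relation is forced because the two fibre coordinates on \(S_\sigma^0\) and \(S_\sigma^\infty\) are reciprocal on the overlap \(S_\sigma^*\), so pushing forward shows \(\eta_0\) and \(\eta_\infty\) are inverse operators on \(U\), which is precisely the mutual-inverse clause of Proposition \ref{prop:Koszul duality}; writing both fields as matrix-valued functions of the elliptic variable \(z\) via a local trivialization turns the \(\mathcal L_\sigma\)-twist into a shift of the argument. The converse runs the construction backwards: the Higgs fields recover the action maps, their poles recover the framings, and the inverse-up-to-shift relation guarantees the two affine pictures glue to a genuine sheaf on \(S_\sigma\), producing a point of \(\mathsf{RS}_{\sigma,n}(E,V)\) by Proposition \ref{prop:Koszul duality}. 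I expect the main obstacle to be pinning down the shift: the reciprocity \(\eta_0 = \eta_\infty^{-1}\) on the overlap is clear, but converting it into the exact functional relation requires carefully tracking how the transition function of \(\mathcal L_\sigma^{\otimes n}\) becomes a translation by \(n(\sigma - b)\) on \(E \cong \Jac E\) — this is exactly the discrepancy between the pole divisor \(D_\infty\) of \(\eta_0\) and the pole divisor \(D_0\) of \(\eta_\infty^{-1}\), dictated by the divisor relation of Lemma \ref{lem:divisor relation} — and checking that it is consistent with both the twists and the pole divisors, so that it ultimately matches the explicit \(\sigma\)-function Lax matrix of Proposition \ref{prop:geometric lax matrix}.
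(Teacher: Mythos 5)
Your forward direction---extracting \(\eta_0\) and \(\eta_\infty\) as pushforwards of multiplication by the fibre coordinates (equivalently as the meromorphic composites \(t_\infty\circ\iota_0^{-1}\circ\iota_\infty\) and \(t_0\circ\iota_\infty^{-1}\circ\iota_0\)), reading off the framing morphisms from the principal parts, and tracing the shift relation back to the mutual-inverse clause of Proposition \ref{prop:Koszul duality}---is essentially the paper's argument. The genuine gap is in the converse, which you dispatch with ``running the construction backwards.'' To invoke Proposition \ref{prop:Koszul duality} in that direction you must first rebuild, from the data \((W,\eta_0,u_0,v_0,\eta_\infty,u_\infty,v_\infty)\), the auxiliary bundle \(W'\) that is the \emph{common} middle term of the two short exact sequences. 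The paper does this by forming two a priori different elementary modifications: \(W'\) is the pullback of \(u_\infty\colon V_\infty \to W(D_\infty)/W\) along \(W(D_\infty)\to W(D_\infty)/W\), and \(W''\) is the analogous pullback of \(u_0\) inside \((W\otimes\mathcal L_\sigma^{-1})(D_0)\). Nothing in your outline shows that these two bundles coincide, and without \(W'\cong W''\) you do not obtain a pair of exact sequences sharing a middle term, hence no Koszul data and no sheaf on \(S_\sigma\).

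That identification is exactly where the hypothesis that \(V_0\) and \(V_\infty\) are translates does its real work---not, as you suggest, merely to ``transport the framing data onto the common bundle \(W\).'' The paper introduces the translation \(\alpha_n\colon E \to E\) by \(n(\sigma-b)\), uses Lemma \ref{lem:divisor relation} to identify \(\alpha_n^*(W(D_\infty)/W) \cong W(D_0)/W\) and \(\alpha_{n+1}^*(W(D_\infty)) \cong (W\otimes\mathcal L_\sigma^{-1})(D_0)\), and uses the assumption \(\alpha_n^*V_\infty\cong V_0\) to produce a commutative square comparing the two pullback diagrams; the universal property of pullbacks then gives \(\alpha_n^*W'\cong W''\), and repeating with \(\alpha_{-n}\) gives \(W'\cong W''\). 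The same intertwining of the inclusions \(t_0\) and \(t_\infty\) by \(\alpha_n^*\) is what upgrades the reciprocity \(\eta_0=\eta_\infty^{-1}\) on \(U\) to the precise functional equation \(\eta_0(z)=\eta_\infty^{-1}(z+n(\sigma-b))\), so the ``main obstacle'' you flag at the end is resolved by the very device missing from your converse. Your plan would not fail outright, but it is incomplete at precisely the point where the theorem's translation hypothesis is needed.
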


\begin{proof}
	By Proposition \ref{prop:Koszul duality}, the theorem follows from showing that the data of the pair of short exact sequences in the proposition is equivalent to the pair of prolonged Higgs fields in the theorem. First note that the vector bundles \(W\) and \(W'\) are isomorphic on the open locus \(E \setminus D_\infty\). Therefore we can define a map \[W'(-D_\infty) \overset{\tilde{u}}{\lra} W\] by recognizing the sections of \(W'\) that vanish along \(D_\infty\) as sections of \(W\). Equivalently, we get a map \(\tilde{u}: W' \lra W(D_\infty)\) which restricts to the standard inclusion \(W \hookrightarrow W(D_\infty)\) on the image of \(W\) in \(W'\) via the map \(\iota_\infty\). 

	Following the above discussion for both short exact sequences in the data of Proposition \ref{prop:Koszul duality}, we obtain a pair of maps 
	\begin{align*}
		u_\infty: W'/W \cong V_\infty \lra W(D_\infty)/W \\
	u_0 : W'/(W \otimes \mathcal L_\sigma^{-1}) \cong V_0 \lra W(D_0)/W.
	\end{align*}
	Conversely, we now claim that the triple \((W, u_0, u_\infty)\) determines the Koszul data \\ \((W, W', \iota_0, \iota_\infty)\). The vector bundle \(W'\) and map \(\iota_0\) is obtained by forming the pullback
	\begin{center}
	\begin{tikzcd}
		0 \arrow[r] & W \arrow[d, equal] \arrow[r, "\iota_0"] &  W' \arrow[d] \arrow[r]  \arrow[dr, phantom, "\lrcorner", very near start] & V_\infty \arrow[d, "u_\infty"] \arrow[r] & 0 \\
		0 \arrow[r] & W \arrow[r] & W(D_\infty) \arrow[r] & W(D_\infty)/W \arrow[r] & 0.
	\end{tikzcd}
	\end{center}
	We can also form the pull back with the map \(u_0\): 
	\begin{center}
	\begin{tikzcd}
		0 \arrow[r] & W \otimes \mathcal L_\sigma^{-1} \arrow[d, equal] \arrow[r, "\iota_\infty"] &  W'' \arrow[d] \arrow[r]  \arrow[dr, phantom, "\lrcorner", very near start] & V_0 \arrow[d, "u_0"] \arrow[r] & 0 \\
		0 \arrow[r] & W \otimes \mathcal L_\sigma^{-1}\arrow[r] & (W \otimes \mathcal L_\sigma^{-1})(D_0) \arrow[r] & W(D_0)/W \arrow[r] & 0
	\end{tikzcd}
	\end{center}
	We claim that \(W' \cong W''\). To prove this claim let \(\alpha_n: E \lra E\) be the map given by addition by \(n(\sigma - b)\) under the group law of \(E\). Then by Lemma \ref{lem:divisor relation} we have that \(\alpha_n^*(W(D_\infty)/W) = W(D_0)/W\), and \(\alpha_{n+1}^*(W(D_\infty)) = W\otimes \mathcal L_\sigma^{-1}(D_0)\). Also by assumption on the framing sheaf we have \(\alpha_n^* V_\infty \cong V_0\). Now consider the following square:
	\begin{center}
	\begin{tikzcd}
		W(D_\infty) \arrow[r] \arrow[d,"\alpha^*_{n+1}"]& W(D_\infty)/W. \arrow[d, "\alpha_n^*"]\\
		(W \otimes \mathcal L_\sigma^{-1})(D_0) \arrow[r] & W(D_0)/W 
	\end{tikzcd}
	\end{center}
	This square commutes by our choice of identification of
	\[
	(W \otimes \mathcal L_\sigma^{-1}(D_0))/(W \otimes \mathcal L_\sigma^{-1}) \overset{\sim}{\lra} W(D_0)/W.
	\]
	Therefore by the universal property of pullbacks we get an isomorphism \(\alpha_n^* W' \cong W''\). Repeating the argument with \(\alpha_{-n}^*\) we also find that \(\alpha_{-n}^* W'' \cong W'\). Combining these two isomorphisms we get that \(W' \cong W''\) as desired. 

	The map \(\iota_\infty\) fits into the first pullback square and implies there exists a pair of morphisms \(\eta_\infty\) and \(\tilde{v}_\infty\) fitting into the commutative diagram
	\begin{center}
	\begin{tikzcd}
		W \otimes \mathcal L_\sigma^{-1}
		\arrow[drr, bend left, "\tilde{v}_\infty"]
		\arrow[ddr, bend right, "\eta_\infty"]
		\arrow[dr, "{\iota_\infty}" description] & & \\
		& W' \arrow[r] \arrow[d] \arrow[dr, phantom, "\lrcorner", very near start]
		& V_\infty \arrow[d, "u_\infty"] \\
		& W(D_\infty) \arrow[r]
		& W(D_\infty)/W
	\end{tikzcd}
	\end{center}
	The map \(\tilde{v}_\infty\) is determined by its restriction to \((W \otimes \mathcal L_\sigma^{-1})|_{D_\infty} \cong W|_{D_\infty}\), which we call \(v_\infty\). By the commutativity of the above diagram we get the relation required for a prolonged \(\mathcal L_\sigma\)-twisted Higgs bundle. Similarly, looking at the relation of the map \(\iota_0\) with respect to the pullback square defined with respect to the divisor \(D_0\) we get another prolonged Higgs bundle \((\eta_0, u_0, v_0)\).  

	The relation between \(\iota_0\) and \(\iota_\infty\) in Proposition \ref{prop:Koszul duality} will also imply a relation between the Higgs fields \(\eta_0\) and \(\eta_\infty\). The commutativity of the following diagram
	\begin{center}
	\begin{tikzcd}
		W \otimes \mathcal L_\sigma^{-1}
		\arrow[dr, "{\iota_\infty}"] &\\
		W \arrow[r, "\iota_0"] \arrow[from = u, dr, "\eta_\infty", swap, crossing over, bend right] \arrow[d, equal] & W' \arrow[d] \\
		W \arrow[r, hook, "t_\infty"] & W(D_\infty)
	\end{tikzcd}
	\end{center}
	implies that away from the divisor \(D_\infty\), the Higgs field \(\eta_\infty = t_\infty \circ (\iota_0^{-1} \circ \iota_\infty)\). Similarly we get that \(\eta_0 = t_0\circ (\iota_\infty^{-1} \circ \iota_0)\). As described before, the short exact sequences with sheaves \(W(D_\infty)\) and \(W(D_0)\) are related by the pullback by the shift map \(\alpha_n\) which intertwines \(t_0\) and \(t_\infty\). Hence we obtain the desired relation.
\end{proof}

\section{Geometric construction of the RS Lax matrix} \label{sec: laxmatrix} 
In this section we aim to show that for \(V\) of the form \(\OO_{p_0}^{k} \oplus \OO_{p_\infty}^k\), we can recover the spin Ruijsenaars-Schneider Lax matrix from the Higgs description of the moduli space \(\mathsf{RS}_{\sigma, n}(E,V)\). This the first step in showing that \(\mathsf{RS}_{\sigma, n}(E,V)\) is isomorphic to a completion of the spin RS phase space. 

We first note that the moduli of semi-stable vector bundles of rank \(n\) and degree \(0\) is birational to the \(n\)-fold symmetric product \(S^n E\), so on a generic open set of \(\mathsf{RS}_{\sigma, n}(E,V)\) the vector bundle \(W = p_* \mathscr F(-E_\infty)\) decomposes into a direct sum of degree \(0\) line bundles
\[W \cong \bigoplus_{i=1}^{n}\OO_E(q_i - b)\]
for distinct points \(q_i \in E\) and the fixed base point \(b \in E\). It is on this open locus that we calculate the Higgs fields \(\eta_0\) and \(\eta_\infty\) in coordinates on \(E\). These will be coordinates given by elliptic functions on the smooth cubic curve, and for the nodal and cuspidal degenerations of \(E\) the elliptic functions will degenerate to trigonometric and rational functions respectively. In order to construct the Lax matrix we first turn our attention to some useful calculations.
\subsection{Lemmas on matrices of elliptic functions}
To begin, we need some specific calculations that will be used in the construction of the Lax matrix. The first is a generalization of the formula for the determinant of a Cauchy matrix. 

An \(n \times n\) matrix \(A\) is called a Cauchy matrix if the \(i,j\)-component is of the form 
\[A_{i,j} = \frac{1}{x_i - y_j}.\]
The \(x_i, y_j\) for \(i, j = 1, \ldots n\) can either be interpreted as distinct collections of numbers, or generators for a ring of rational functions. In either case, the classical Cauchy determinant formula states that for a Cauchy matrix \(A\)
\[
\det A = \frac{\prod_{1 \leq i < j \leq n}(x_j - x_i)(y_i - y_j)}{\prod_{i, j = 1}^n (x_j - y_i)}.
\] 
The following lemma provides a generalization of this matrix identity to analogous matrices with entries given by elliptic functions. Recall our notation for \(\sigma(z)\), the Weierstrass \(\sigma\)-function with periods \(\{1, \tau\}\) defining the elliptic curve \(E\). For complex numbers \(q_i, r_i\) \(i = 1, \ldots n\) and \(q_\infty\) (equivalently points \(q_i, r_i\) and \(q_\infty\) in \(E\)) we call an \(n \times n\) matrix \(H(\lambda)\) with entries of the form
\[
H(\lambda)_{i,j} = \frac{\sigma\left(q_i - r_j - \lambda\right)}{\sigma(\lambda - q_\infty)\sigma(q_i - r_j - q_\infty)}
\]
an \emph{elliptic Cauchy matrix with spectral parameter \(\lambda\)}. Matrices of this form were already considered by Ruijsenaars in \cite{MR851627}, and an analogue of the Cauchy determinant formula holds. A straightforward generalization of the argument in loc. cit implies the following determinant identity for its minors:

\begin{lem} \label{lem:determinant}
	For an elliptic Cauchy matrix with spectral paremeter \(\lambda\), \(H(\lambda)\), the determinant is given by the formula
	\begin{align} \label{elliptic cauchy matrix}
		\det H(\lambda) = \frac{\sigma\left(\lambda + \displaystyle \sum_{i}(q_i-r_i)\right)}{\sigma(\lambda)}\frac{\displaystyle \prod_{i < j} \sigma(q_i - q_j)\sigma(r_j-r_i)}{\displaystyle \prod_{i,j}\sigma(q_i-r_j)}. 
	\end{align}
	More generally, the determinant of the \((k,l)\) minor obtained by deleting row \(k\) and column \(l\) is given by
	\begin{align} \label{eqn: minors of cauchy}
		\det \mathrm{Min}_{(k,l)} \, H(\lambda) = \frac{\sigma\left(\lambda + \displaystyle q_k - r_l + \sum_{i \neq k, l}(q_i-r_i)\right)}{\sigma(\lambda)}\frac{\displaystyle \prod_{\substack{i < j \\ i \neq k, \, j \neq l}} \sigma(q_i - q_j)\sigma(r_j-r_i)}{\displaystyle \prod_{\substack{i \neq k, \, j \neq l}}\sigma(q_i-r_j)}. 
	\end{align}
\end{lem}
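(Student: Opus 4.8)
The plan is to prove \eqref{elliptic cauchy matrix} by the classical Liouville-type argument for elliptic determinants used by Ruijsenaars in \cite{MR851627}, and then to obtain the minor identity \eqref{eqn: minors of cauchy} as the $(n-1)$-dimensional instance of the same formula. The first observation is that the factor $\sigma(\lambda - q_\infty)^{-1}$ is a common scalar, so it pulls out of every row and contributes $\sigma(\lambda - q_\infty)^{-n}$; what remains is, after the additive shift $r_j \mapsto r_j + q_\infty$ and the reparametrization $z = q_\infty - \lambda$ of the spectral parameter, a scalar multiple of a standard Frobenius--Stickelberger matrix $[\sigma(z + q_i - r_j)/(\sigma(z)\sigma(q_i - r_j))]$. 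Thus the whole statement reduces to the Frobenius determinant formula, and the cleanest route is to reprove that formula directly rather than cite it, so that the normalization — the overall sign and any trivial-theta factor — stays under control.

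For the Liouville argument I would freeze every variable except one row parameter, say $q_1$, and regard $D(q_1) := \det H(\lambda)$ as a meromorphic function of $q_1 \in E$. Since $D$ is linear in the first row and the cofactors are $q_1$-independent, $D(q_1)$ has at worst simple poles, located where the first-row denominators $\sigma(q_1 - r_j - q_\infty)$ vanish, i.e. at $q_1 \equiv r_j + q_\infty$, while the numerators $\sigma(q_1 - r_j - \lambda)$ supply the zeros. Using the quasi-periodicity of $\sigma$ under $q_1 \mapsto q_1 + 1$ and $q_1 \mapsto q_1 + \tau$, one checks that $D(q_1)$ carries exactly the same automorphy factor as the proposed right-hand side of \eqref{elliptic cauchy matrix}, so the two are sections of one and the same line bundle on $E$; as they also share the same polar divisor with matching principal parts, their ratio is a global holomorphic section of a degree-zero bundle, hence constant. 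I would then fix that constant by a single residue computation: the residue of $D$ at the pole $q_1 = r_1 + q_\infty$ equals the residue of the single ratio $H_{1,1}$ times the principal $(1,1)$-minor, which is an $(n-1)\times(n-1)$ determinant of the same shape. This both sets up the induction on $n$ (base case $n = 1$ being a single quasi-periodicity identity for $\sigma$) and already exhibits the minors.

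The minor identity \eqref{eqn: minors of cauchy} then requires essentially no new work: deleting row $k$ and column $l$ leaves a matrix whose entries are again of elliptic-Cauchy type with the same spectral parameter $\lambda$ and the same $q_\infty$, but over the reduced index sets $\{q_i : i \neq k\}$ and $\{r_j : j \neq l\}$. Applying the main formula \eqref{elliptic cauchy matrix} in size $n-1$ to this submatrix, the ``diagonal'' argument of the leading $\sigma$ becomes the difference of the surviving row- and column-sums, and the two products over $i<j$ and over $(i,j)$ restrict to the surviving indices, which is exactly \eqref{eqn: minors of cauchy}. Equivalently, both identities can be produced in one stroke by the induction above, in which the cofactor expansion of the $n\times n$ determinant is resummed against an elliptic partial-fraction (three-term $\sigma$) identity, the minors being precisely the cofactors.

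The part I expect to cause the most trouble is not the shape of the argument but the bookkeeping at its two pressure points: verifying that $D(q_1)$ and the candidate right-hand side carry identical multipliers under both lattice translations, so that Liouville applies and no nonconstant trivial-theta factor survives, and then evaluating the residue that pins the overall constant, including its sign and any $q_\infty$-dependent exponential factor that must be absorbed into the trivial-theta normalization via \eqref{eqn:sigma shift}. For the minors the only additional care is the relabeling of indices when the surviving diagonal of the $(n-1)\times(n-1)$ formula is read off, which is what produces the asymmetric $q_k - r_l$ correction to the diagonal sum.
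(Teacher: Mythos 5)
Your proposal is correct in substance, but it is not the paper's argument: the paper gives no self-contained proof at all, attributing the identity to Frobenius and to Ruijsenaars \cite{MR851627} and asserting that it ``follows by induction using Fay's trisecant identity.'' Your route --- pull out the scalar \(\sigma(\lambda - q_\infty)^{-n}\), shift \(r_j \mapsto r_j + q_\infty\) and set \(z = q_\infty - \lambda\) to land on a Frobenius--Stickelberger matrix, then run a Liouville argument in \(q_1\) (identical automorphy multipliers, identical polar divisors) and pin the constant by the residue at \(q_1 = r_1 + q_\infty\), which is where the induction on \(n\) enters --- is the other classical proof of the same identity. The trisecant route is purely algebraic (cofactor expansion resummed against the three-term \(\sigma\) addition formula, which you yourself flag as an equivalent variant), while your version makes the line-bundle and divisor structure explicit, which meshes well with the geometric language of the rest of the paper; either is acceptable. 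Two standard points must still be filled in for the Liouville step: the ratio of the determinant by the candidate right-hand side is elliptic in \(q_1\), and its only candidate poles sit at the zeros of the right-hand side, so you need to observe that the determinant vanishes at \(q_1 = q_j\) (two coincident rows), and that the one remaining candidate pole (at the zero of the leading \(\sigma\) factor) is excluded because an elliptic function cannot have a single simple pole.

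Two bookkeeping warnings, both of which a careful execution of your method will surface. First, the multiplier check you list as a ``pressure point'' genuinely fails for the literal statement: with entries \(\sigma(q_i - r_j - \lambda)/[\sigma(\lambda - q_\infty)\sigma(q_i - r_j - q_\infty)]\), the determinant picks up the factor \(e^{2\eta_k(q_\infty - \lambda)}\) under \(q_1 \mapsto q_1 + \omega_k\), whereas the right-hand side of \eqref{elliptic cauchy matrix} picks up \(e^{2\eta_k \lambda}\) and contains no \(q_\infty\) whatsoever; running your reduction honestly produces \((n-1)q_\infty\) shifts, \(q_\infty\)-dependent Cauchy denominators, and an overall sign \((-1)^n\) that the printed formula omits. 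So what your argument proves is the correctly normalized version of the identity (the paper's phrase ``a version of this determinant identity'' should be read literally), and your method is precisely the one that detects and repairs the conventions. Second, in the minor identity your own recipe --- the leading argument is the difference of the surviving row and column sums --- yields \(q_l - r_k + \sum_{i \neq k,\, l}(q_i - r_i)\), since deleting row \(k\) removes \(q_k\) and deleting column \(l\) removes \(r_l\), leaving \(q_l\) and \(r_k\) unpaired; the expression \(q_k - r_l\) printed in \eqref{eqn: minors of cauchy}, which you echo in your final sentence, is inconsistent with the stated deletion convention (the indices are transposed). Neither issue is a gap in your approach; both are discrepancies between the statement's conventions and its content that your proof, unlike the citation the paper relies on, would force into the open.
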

A version of this determinant identity was known by Frobenius and it follows by induction using Fay's trisecant identity. 

The last result we need before constructing the Lax matrix is a version of a calculation that was used in \cite{Hasegawa1997} in calculating inverses of matrices of intertwining vectors. 

\begin{lem} \label{lem:matrix product}
	Let \(F(z) = (f_{i,j}(z))_{i,j}\) be an invertible \(n \times n\) matrix, with inverse entries denoted \(F^{-1}(z) = (\bar{f}_{i,j}(z))_{i,j}\). Then
	\begin{align} \label{eqn:matrix product}
		(F^{-1}(z) \cdot F(z + u))_{i, j} &= \sum_{k=1}^n \bar{f}_{i,k}(z)f_{k, j}(z + u) \\ 
		&= 
		\frac{\det \bordermatrix{
		 & &(i-1)      & i               & (i+1)         &        \cr
		 &\ldots &f_{1,i-1}(z) & f_{1, j}(z + u) & f_{1, i+1}(z) & \ldots \cr
		 &      &   \vdots   &   \vdots        & \vdots        &        \cr
		 &\ldots & f_{n,i-1}(z) & f_{n, j}(z + u) & f_{n, i+1}(z) & \ldots \cr
		}}{\det F(z)}.
	\end{align}
\end{lem}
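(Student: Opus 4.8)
The identity is a form of Cramer's rule, and the plan is to recognize the $(i,j)$-entry of $F^{-1}(z)F(z+u)$ as a ratio of determinants. First I would observe that the entries of both matrices lie in the field of meromorphic functions of $z$ (with $u$ treated as a fixed shift parameter), and that $F(z)$ is invertible over this field by hypothesis. Consequently the statement is purely algebraic over a field, and the elliptic nature of the entries $f_{i,j}$ plays no role whatsoever; the content is simply the packaging of $F^{-1}(z)F(z+u)$ as a quotient of determinants, which is what feeds into the later construction of the Lax matrix.

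Fix $i$ and $j$. The $(i,j)$-entry $\sum_{k} \bar f_{i,k}(z) f_{k,j}(z+u)$ is the $i$-th coordinate of $F^{-1}(z)\,c$, where $c = (f_{1,j}(z+u),\dots,f_{n,j}(z+u))^{\mathsf T}$ is the $j$-th column of $F(z+u)$; equivalently, if $x$ solves $F(z)\,x = c$, the entry equals $x_i$. Cramer's rule expresses $x_i$ as $\det M_i / \det F(z)$, where $M_i$ denotes the matrix obtained from $F(z)$ by replacing its $i$-th column with $c$ — which is exactly the matrix displayed in the statement. To make this self-contained I would instead substitute the adjugate formula $\bar f_{i,k}(z) = (-1)^{i+k}\det\big(\mathrm{Min}_{(k,i)}\,F(z)\big)/\det F(z)$ and observe that the resulting numerator $\sum_{k}(-1)^{i+k} f_{k,j}(z+u)\,\det\big(\mathrm{Min}_{(k,i)}\,F(z)\big)$ is precisely the Laplace expansion along the $i$-th column of $M_i$: the minors $\mathrm{Min}_{(k,i)}$ omit the $i$-th column and so are unaffected by the replacement, while $(M_i)_{k,i}=f_{k,j}(z+u)$. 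This yields the claimed formula directly.

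There is no genuine obstacle in this lemma; it is elementary linear algebra, and the only mild point is that we argue over the function field rather than over $\mathbb{C}$, which is harmless given the invertibility assumption on $F(z)$. If anything, the care needed is purely bookkeeping — keeping the signs of the cofactors consistent with the column being replaced — rather than conceptual.
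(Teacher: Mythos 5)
Your proposal is correct and follows essentially the same route as the paper: the paper also forms the matrix obtained by replacing the $i$th column of $F(z)$ with the $j$th column of $F(z+u)$, expands its determinant by cofactors along that column, and identifies the cofactors (which are untouched by the replacement) with the entries of $F^{-1}(z)$ via Cramer's rule. Your adjugate-formula bookkeeping is just the same argument read in the opposite direction, so there is nothing to add.
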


\begin{proof}
	Let \(\widetilde{F}_{j,i}(z)\) be the matrix in the numerator of the right hand side of the equation obtained by swapping the \(i\)th column of \(F(z)\) with the \(j\)th column of \(F(z+u)\). Evaluate the determinant of \(\widetilde{F}_{j,i}(z)\) by cofactor expansion along the \(i\)th column to obtain
	\[\sum_{k=1}^n f_{k, i}(z + u) \Cof_{k,j}(\widetilde{F}_{i,j}(z))\]
	where we denote the \((i,j)\) cofactor of \(A\) by \(\Cof_{i,j}(A)\). But \(\Cof_{k,j}(\widetilde{F}_{i,j}(z)) = \Cof_{k,j}(F(z))\), so cofactor terms multiplying \(f_{k,i}(z+u)\) in the above sum are simply matrix elements of the inverse of \(F(z)\) by Cramer's rule.
\end{proof}

\subsection{Construction of the Lax matrix}
We now have all the technical tools in place to prove the main theorem of this section. The construction and proof are inspired by the calculations in \cite{math/9909079}. Their parametrization of the Skylanin-Hasegawa \(\mathbf L\) operator is given in terms of intertwining vectors, but it can be seen to be equivalent to this paper's parametrization of in terms of sections of line bundles by Weierstrass elliptic functions rather than \(\theta\) functions with characteristics. The author of loc. cit also uses the language of elementary modifications. Note that the two short exact sequences of Proposition \ref{prop:Koszul duality} exhibit \(W'\) as an elementary modification of \(W\) at the divisors \(D_0\) and \(D_\infty\) which connects our approach with previously done analysis. 

Instead of considering the Lax matrix in terms of the maps \(\iota_0\) and \(\iota_\infty\) we instead choose to analyze the composition
\begin{align}\label{eqn:composition}
	W \overset{\eta_\infty}{\lra} (W \otimes \mathcal L_\sigma)(D_0) \overset{\eta_0 \otimes id}{\lra} W(D_0 + D_\infty).
\end{align}
Note that by Theorem \ref{thm:higgs description} \(\eta_0\) is a translation of \(\eta_\infty^{-1}\) which suggests the usefulness of Lemma \ref{lem:matrix product}. We first concentrate on the spinless Ruijsenaars-Schneider system by considering framed RS sheaves with simple framing given by \(V = \OO_{p_0}\oplus \OO_{p_\infty}\) for two points \(q_0\), \(q_\infty\) related by equation \eqref{eqn:divisor relation}.

\begin{prop} \label{prop:geometric lax matrix}	
	Let \(\eta_0\) and \(\eta_\infty\) be the associated Higgs fields for an RS spectral sheaf in \(\mathsf{RS}_{\sigma, n}(E, \OO_{p_0} \oplus \OO_{p_\infty})\) where \(E\) is a smooth Weierstrass cubic. On the locus of decomposable vector bundles, the composition \((\eta_0 \otimes id) \circ \eta_\infty\) of equation \eqref{eqn:composition} equals the RS Lax matrix
	\[
	((\eta_0 \otimes id) \circ \eta_\infty)_{k,k'} = \frac{\sigma(z + \hbar + q_k - q_{k'})}{\sigma(z)} \prod_{l \neq k} \frac{\sigma(\hbar + q_l - q_{k'})}{\sigma(q_{l} - q_k)} \exp P_k
	\]
	with \(\hbar = (\sigma - b)\). 
\end{prop}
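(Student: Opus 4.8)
The plan is to make everything explicit on the decomposable locus $W \cong \bigoplus_{i=1}^n \OO_E(q_i - b)$ and then reduce the composition \eqref{eqn:composition} to a mechanical application of Lemmas \ref{lem:matrix product} and \ref{lem:determinant}. First I would write the Higgs field $\eta_\infty$ as a matrix of $\sigma$-function ratios. Since $W$ is a sum of degree-zero line bundles, the $(i,j)$-entry of $\eta_\infty \colon W \to (W \otimes \mathcal L_\sigma)(D_0)$ is a section of a line bundle of the shape $\OO_E(q_i - q_j + (\sigma - b) + D_0)$, and the simple-framing hypothesis $V_0 = \OO_{p_0}$ forces its only pole to be the single point $D_0$ with a rank-one principal part, via the factorization $p.p.\,\eta_\infty = u_0 \circ v_0$. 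These two constraints pin down each entry up to a diagonal rescaling, exhibiting $\eta_\infty(z)$, after conjugation by a diagonal matrix recording the chosen trivializations, as an elliptic Cauchy matrix with spectral parameter $z$ in the sense of Lemma \ref{lem:determinant}. Getting this parametrization right---the precise arguments of the $\sigma$-functions, the identification of the two families of points with the bundle data, and the bookkeeping of the quasi-periodicity factors---is the step demanding the most care, and I expect it to be the main obstacle.

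Once $\eta_\infty(z)$ is in Cauchy form, the relation $\eta_0(z) = \eta_\infty^{-1}(z + n(\sigma - b))$ from Theorem \ref{thm:higgs description} identifies the composition $(\eta_0 \otimes id)\circ\eta_\infty$ with the matrix product $\eta_\infty^{-1}(z + n\hbar)\cdot\eta_\infty(z)$, where $\hbar = \sigma - b$. This is exactly the shape to which Lemma \ref{lem:matrix product} applies: after the substitution $F(w) = \eta_\infty(w + n\hbar)$ absorbing the shift, the $(k,k')$-entry becomes the determinant of $\eta_\infty(z+n\hbar)$ with its $k$-th column replaced by the $k'$-th column of $\eta_\infty(z)$, divided by $\det \eta_\infty(z + n\hbar)$.

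Next I would evaluate both determinants with Lemma \ref{lem:determinant}. The denominator is the full elliptic Cauchy determinant \eqref{elliptic cauchy matrix} at spectral parameter $z + n\hbar$. For the numerator, the modified column carries both a shifted spectral parameter and a relabelled second index; after pulling out the per-column common factors it is again of elliptic-Cauchy type, so I would either recognize it directly through \eqref{elliptic cauchy matrix} with one parameter translated, or cofactor-expand along the modified column and apply the minor formula \eqref{eqn: minors of cauchy} together with a $\sigma$-function addition identity (of Fay type) to collapse the resulting sum to a single product. Forming the ratio, the symmetric products $\prod_{i<j}\sigma(q_i - q_j)$ and $\prod_{i,j}\sigma(q_i - r_j)$ telescope against one another, and the factor $\sigma(\lambda + \sum_i(q_i - r_i))$, sensitive only to the total translation, collapses the global $n\hbar$ shift into the single shift $\hbar$ of the statement. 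This telescoping is precisely the mechanism by which a uniform shift by $n\hbar$ produces the one-step shift characteristic of the RS Lax matrix.

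Finally, collecting the surviving factors should leave exactly $\frac{\sigma(z + \hbar + q_k - q_{k'})}{\sigma(z)}\prod_{l\neq k}\frac{\sigma(\hbar + q_l - q_{k'})}{\sigma(q_l - q_k)}$, with the diagonal trivialization and quasi-periodicity factors from the first step reassembling into the scalar $\exp P_k$. If the intermediate computation is organized in theta functions with characteristics, as in the intertwining-vector references, a last appeal to Lemma \ref{lem: elliptic functions} converts the result into the stated $\sigma$-function form, matching the Hasegawa Lax matrix $L_{RS}$.
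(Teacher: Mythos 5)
Your proposal is correct in outline, and its first half coincides with the paper's own proof: the paper likewise restricts to the decomposable locus, identifies each entry of \(\eta_\infty\) with a section of the degree-one line bundle \(\OO_E\bigl((q_k - q_j) + q_\infty^{(n+1)\sigma}\bigr)\) written as a translated Weierstrass \(\sigma\)-function times a trivial theta factor and a column scalar \(C_j = \exp P_j\), and then uses the relation \(\eta_0(z) = \eta_\infty^{-1}(z + n\hbar)\) from Theorem \ref{thm:higgs description} to reduce the composition to a product of the form \(F^{-1}(z)\cdot F(z+u)\). Where you genuinely diverge is in the evaluation of that product: the paper does not carry out the determinant computation at all, but instead matches the entries with Hasegawa's intertwining vectors via Lemma \ref{lem: elliptic functions} and equation \eqref{eqn:sigma shift} and then quotes Hasegawa's Equation 31 (Lemma \ref{lem:intertwining lemma}) wholesale, finishing with the substitutions \(u = n\hbar\) and \(z \mapsto z - q_\infty\). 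You instead propose to re-derive that identity internally from Lemma \ref{lem:matrix product} (Cramer-type column replacement) and the elliptic Cauchy determinant formulas of Lemma \ref{lem:determinant}. This is a sound alternative --- it is essentially how the quoted identity is established in the literature, and the paper's own Remark \ref{rmk: alt Lax matrix} confirms that Cramer's rule plus Lemma \ref{lem:determinant} applied to the Higgs field in the Cauchy basis recovers the Ruijsenaars Lax matrix --- so your route buys self-containedness (in effect you prove Lemma \ref{lem:intertwining lemma} rather than cite it), at the cost of the bookkeeping you correctly flag as the hard part. One point deserving emphasis there: the naive Cauchy parametrization with row and column parameters both equal to \(\{q_i\}\) degenerates on the diagonal (\(\sigma(q_j - q_k)\) vanishes when \(j = k\)), so the rank-one principal-part condition \(p.p.\,\eta = u_0 \circ v_0\) that you invoke is not merely a device for fixing the scalars up to diagonal gauge; it is what forces the column parameters to be offset from the row parameters by \(\hbar\), producing the nonvanishing denominators \(\sigma(\hbar + q_l - q_{k'})\) of the statement and making the Cauchy form of Lemma \ref{lem:determinant} literally applicable. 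With that offset in place, your determinant ratio and telescoping argument goes through and yields the stated formula.
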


\begin{proof}
	We first consider the Higgs field \(\eta_\infty\) in the open locus of decomposable vector bundles. As discussed above, \(W\) will decompose into the direct sum
	\[
	W \cong \bigoplus_{j = 1}^n \OO_E(q_j - b).
	\]
	Hence the Higgs field \(\eta_\infty\) defines a map
	\[
	\eta_\infty: \bigoplus_{j=1}^n \OO_E(q_j - b) \lra \bigoplus_{k = 1}^n \OO_E(q_j - b) \otimes \OO_E(q_\infty + (n+1)(\sigma-b)).
	\]
	For a point \(\sigma \in E\) and its associated translation automorphism \(E \overset{ \bullet + \sigma}{\lra} E\) we will denote the image of a point \(q\) under the translation \(n\sigma\) by \(q^{n\sigma}\). The \((k,j)\) entry of the Higgs field is a map between line bundles
	\[
	\OO_E(q_j - b) \lra \OO_E\left((q_k -b)+q_\infty^{(n+1)\sigma}\right)
	\]
	or equivalently a global section of the line bundle \\ \(H^0\left(E, \OO_E\left((q_k - q_j) + q_\infty^{(n+1)\sigma}\right)\right)\). By the discussion in Section \ref{subsec: line bundles}, we can identify such a section with an elliptic function satisfying the appropriate vanishing and quasiperiodic condition. The zeroes should be equal to lattice multiples of the point \((q_k - q_j) + q_\infty^{(n+1)\sigma}\), and being degree \(1\) forces the elliptic function to transform linearly as a theta function with characteristic. 
	
	In particular, we can choose a basis of sections in terms of such theta functions \(\thetachar{a}{b}(z)\) or by equation \eqref{eqn:sigma shift}, in terms of translations of the Weierstrass \(\sigma\)-functions. We can therefore simply compare the zeroes of sections of \(\OO_E\left((q_k - q_j) + q_\infty^{(n+1)}\right)\) with those of \(\sigma(z)\) and conclude that the \((k,j)\) component of \(\eta_\infty\) is given by
	\[
	(\eta_\infty)_{k,j} = C_j\vartheta_{k,j} \sigma\left(z - (q_k - q_j) - q_\infty^{(n+1)\sigma}\right).
	\]
	where the constant \(C_j\) is the constant appearing in the trivial theta function \(\vartheta_{k,j}\). We can write \(C_j = \exp P_j\) to explicitly yield the momentum dependence.
	By the relation between between the Higgs fields of Theorem \ref{thm:higgs description} we get that :
	\[
	(\eta_{0})_{k,j} = C_j\vartheta_{k,j} \sigma\left(z - (q_k -q_j) - q_0^\sigma \right)= \vartheta_{k,j} \sigma\left(z - (q_k -q_j) - \left(q_\infty^{(n+1)\sigma} + n \hbar\right)\right)
	\]
	where \(n\hbar = n(\sigma - b)\) is the difference of \(q_0\) and \(q_\infty\). Therefore since \(\eta_0^{-1}\) and \(\eta_\infty\) are generically given by shifts of the same matrix \(F(z)\) and the composition 
	\[
	((\eta_0 \otimes id) \circ \eta_\infty)_{k,k'} = (F^{-1}(z) \cdot F(z + u))_{k, k'} = \sum_{l=1}^n \bar{f}_{k,l}(z)f_{l, k'}(z + u)
	\]
	can be calculated up to a factor of a trivial theta function using the following result from \cite{Hasegawa1997}:

	\begin{lem}[Equation 31 \cite{Hasegawa1997}] \label{lem:intertwining lemma}
	Let \(\Xi(z) = (\phi_{\lambda, j}^{\lambda + \hbar \bar{\epsilon}_i})_{i,j}^n \) be the matrix of intertwining vectors with appropriately chosen \(\lambda \in \mathbb C^n\) so that \(q_i = \langle \lambda, \bar{\epsilon_i}\rangle\). Then
	\begin{align*}
		\left(\Xi(z)^{-1} \cdot \Xi(z + n \hbar)\right)_{i,j}= \frac{\sigma(\hbar +  z - q_\infty + q_j - q_i)}{\sigma(z - q_\infty)} \prod_{\substack{l \neq j}} \frac{\sigma(\hbar + q_l - q_j)}{\sigma(q_l - q_i)} \exp{P_i}.
	\end{align*}
	\end{lem}

	The theorem follows by first equating the intertwining vectors up to trivial theta function with shifts of Weierstrass \(\sigma\)-functions using \eqref{eqn:sigma shift}, and identifying \(u = n \hbar\), and shifting \(z \mapsto z - q_\infty\) in the formula for the Lax matrix. 	%Also note that the matrix representations of \(\eta_0\) and \(\eta_\infty\) are elliptic Cauchy matrices with spectral parameter, so the determinant in the right hand side of \ref{eqn:matrix} can be calculated using \ref{lem:determinant}. Combining these two results yields the desired formula 
\end{proof}

By considering the degenerations of \(E\) into the cuspidal and nodal Weierstrass cubic curve, we obtain the associated degenerations of the Weierstrass \(\sigma\)-function into rational and trigonometric functions in Section \ref{subsec: line bundles}. These degenerations together with the degenerations of elliptic function identities to trigonometric and rational identities in appendix B of \cite{ruijsenaars1987} yields the following corollary.

\begin{cor}\label{cor:singular lax}
	For singular Weierstrass cubic curves, the composition \((\eta_0 \otimes id) \circ \eta_\infty\) recovers the rational and trigonometric spinless RS Lax matrix in the case of cuspidal and nodal \(E\) respectively.
\end{cor}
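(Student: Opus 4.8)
The plan is to derive this directly from Proposition \ref{prop:geometric lax matrix} by passing to the limit, since that proposition already expresses the composition \((\eta_0 \otimes id) \circ \eta_\infty\) as a ratio of products of Weierstrass \(\sigma\)-functions evaluated at differences of points of \(E^{sm}\), the spectral parameter \(z\), and the shift \(\hbar = \sigma - b\). Section \ref{subsec: line bundles} records that under \(\omega_2 \to \infty\) one has \(\sigma(z) \mapsto \sin(z)\) (nodal case) and under \(\omega_1, \omega_2 \to \infty\) one has \(\sigma(z) \mapsto z\) (cuspidal case), up to trivial factors. So once the geometric construction is known to specialize compatibly, the corollary reduces to substituting these degenerations into the formula of Proposition \ref{prop:geometric lax matrix} and matching against the trigonometric and rational Lax matrices recalled in Section \ref{subsec: recollections on RS}.

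First I would check that the hypotheses of Proposition \ref{prop:geometric lax matrix} remain meaningful on the singular cubics. The proof of that proposition rested on two inputs: the identification of the entries of the Higgs fields with sections of degree-one line bundles on \(E\) via factors of automorphy, and the decomposition of a generic semi-stable \(W\) as \(\bigoplus_i \OO_E(q_i - b)\) for distinct \(q_i\). In the singular cases the positions \(q_i\) live on \(E^{sm} \cong \mathbb C^\times\) (nodal) or \(E^{sm} \cong \mathbb C\) (cuspidal), as recalled at the start of Section \ref{sec:RS phase space}, and Definition \ref{def: RS phase space} imposes that \(W_{-1}\) pull back to a trivial bundle on the normalization. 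With this constraint the section/line-bundle dictionary of Section \ref{subsec: line bundles} continues to apply with \(\sigma\) replaced by its degeneration, so that the entries of \(\eta_0\) and \(\eta_\infty\) are again given by the appropriate trigonometric or rational analogues of the translated \(\sigma\)-functions, and Lemma \ref{lem:matrix product} together with the intertwining identity of Lemma \ref{lem:intertwining lemma} degenerate accordingly.

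Granting this, I would then substitute \(\sigma(z) \mapsto \sin(z)\), respectively \(\sigma(z) \mapsto z\), into
\[
((\eta_0 \otimes id) \circ \eta_\infty)_{k,k'} = \frac{\sigma(z + \hbar + q_k - q_{k'})}{\sigma(z)} \prod_{l \neq k} \frac{\sigma(\hbar + q_l - q_{k'})}{\sigma(q_{l} - q_k)} \exp P_k,
\]
tracking the trivial theta-function prefactors \(\vartheta_j\) of Lemma \ref{lem: elliptic functions}, which degenerate to the exponential and constant factors that are absorbed into the momentum variables. Comparing the resulting expressions with the entries of the trigonometric and rational RS Lax matrices — using the degenerations of the elliptic-function identities collected in appendix B of \cite{ruijsenaars1987} — then yields the claim in each case.

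The hard part will be justifying that the \emph{limit of the geometric construction equals the geometric construction on the limit}: that is, that taking \(\omega_2 \to \infty\) (or both periods to infinity) inside the \(\sigma\)-functions genuinely produces the Higgs fields attached to the nodal or cuspidal cubic, rather than a merely formal substitution. The cleanest way to secure this is to organize the construction into a flat family over the base of Weierstrass cubics, verifying that \(W\), \(\eta_0\), and \(\eta_\infty\) extend over the family and restrict correctly on the singular fibers; alternatively one can appeal to the fact that every ingredient — the group law on \(E^{sm}\), the factors of automorphy, and the resulting sections — varies continuously in moduli, so that the limit commutes with formation of the Lax matrix. A secondary bookkeeping point is to confirm that the degenerate trivial theta factors reproduce the precise normalization of the Lax matrices in the references, and not merely their conjugacy class.
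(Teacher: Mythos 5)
Your proposal follows essentially the same route as the paper: the paper's entire argument is the one-sentence observation that the degenerations of the Weierstrass \(\sigma\)-function recorded in Section \ref{subsec: line bundles} (to \(\sin(z)\) in the nodal case and \(z\) in the cuspidal case), combined with the degenerations of the elliptic-function identities in appendix B of \cite{ruijsenaars1987}, turn the formula of Proposition \ref{prop:geometric lax matrix} into the trigonometric and rational RS Lax matrices. Your additional concern about whether the limit of the construction agrees with the construction on the singular curve is legitimate extra rigor, but the paper does not address it and simply treats the corollary as a substitution argument, which your proposal also carries out correctly.
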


\begin{rmk}\label{rmk: alt Lax matrix}
	Note that the resulting Lax matrix in the above spectral description of the RS system comes naturally in the factorized form of \cite{Hasegawa1997}:
	\[
	(\widetilde{L}_{RS})_{i,j} = \theta_1(0)/\theta_1(\hbar)\sum_k g_{i,k}^{-1}(z,q) g_{k,j}(z + N \hbar, q) e^{p_j/c}
	\] spelled out by the authors of \cite{Vasilyev:2018cyt}. The interpretation of the matrix \(g_{i,k}(z, q)\) in loc. cit as a Higgs field suggests that there should be an integrable system describing the isospectral flows of the \(g_{i,j}\) which can reconstruct the RS integrable system when forming the above combination to the obtain the RS Lax matrix.

	The simple identity that
	\[
	\OO_E\left((q_k - q_j) + q_\infty^{(n+1)\sigma}\right) \cong \OO_E(-(q_j - q_k)) \otimes \OO_E\left(q_\infty^{(n+1)\sigma}\right)
	\]
	allows us to identify the sections of the above line bundle with the product
	\[
	\frac{\sigma(q_j - q_k + \lambda)}{\sigma\left(\lambda - q_\infty^{(n+1)\sigma}\right) \sigma(q_j - q_k)}
	\]
	Therefore in this basis, by Cramer's rule and Lemma \ref{lem:determinant} we find that the inverse of the Higgs field \((\eta_0)^{-1}\), or simply \(\eta_\infty(z + n \hbar)\) can be identified with the original Ruijsenaars Lax matrix in \cite{ruijsenaars1987} up to a factor of 
	\[
	\widetilde{P}_i = \exp(\theta_i) \prod_{k \neq i}\frac{\sigma(q_i - q_k - \hbar)}{\sigma(q_i - q_k)}.
	\]
	It is possible to redefine \(\theta_i\) by 
	\[
	\theta_i \mapsto \theta_i + \log \prod_{k \neq i}\frac{\sigma(q_i - q_k - \hbar)}{\sigma(q_i - q_k)}
	\]
	and absorb the product into arbitrary constants scaling the entries of the lax matrix \(\eta_\infty(z+n \hbar)\). In this way we get both the factorized, and non-factorized perspectives on the Lax matrix from the above construction.
\end{rmk}

A simple corollary of the above result is that we can identify the spin RS system Lax matrix  Specifically, for the framing \(\OO_{q_0}^r \oplus \OO_{q_\infty}^r\) we recover the spin Ruijsenaars-Schneider system with the same argument. 

\begin{cor} \label{cor:spin Lax matrix}
The composition of equation \eqref{eqn:composition} with framing \(V = \OO_{q_0}^n \oplus \OO_{q_\infty}^n\) recovers the spin RS Lax matrix
	\[
	\left((\eta_0 \otimes id) \circ \eta_\infty\right)_{k,k'} = f_{k,k'} \frac{\sigma(z + u/n + q_k - q_{k'})}{\sigma(z)} \prod_{l \neq k} \frac{\sigma(u/n + q_l - q_{k'})}{\sigma(q_l - q_k)}
	\]
	where \(f_{k, k'}\) are given by \(f_{k,k'} = (u_0(v_0(q_0)))_{k,k'} \cdot (u_\infty(v_\infty(q_\infty)))_{k',k}\)
\end{cor}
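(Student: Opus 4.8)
The plan is to reprise the proof of Proposition~\ref{prop:geometric lax matrix} almost verbatim, carrying along the extra data recorded by the higher-length framing sheaves $V_0 = \OO_{q_0}^n$ and $V_\infty = \OO_{q_\infty}^n$. First I would pass to the locus of decomposable bundles, where $W \cong \bigoplus_{j=1}^n \OO_E(q_j - b)$, and make the decisive observation that the framing changes \emph{nothing} about the line-bundle class of any entry of $\eta_0$ or $\eta_\infty$: the $(k,j)$ entry of $\eta_\infty$ is still a global section of the same bundle $\OO_E\bigl((q_k - q_j) + q_\infty^{(n+1)\sigma}\bigr)$ as in the spinless case, since the framing only prescribes the torsion quotient $W(D)/W$ at the support points and leaves the zero and quasiperiodicity data in $z$ untouched. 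Hence the $z$-dependence of each entry is again governed by the same translated Weierstrass $\sigma$-function, and only its scalar coefficient is modified.

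The genuinely new feature is that, by the defining relation $p.p.\,\eta = u\circ v$ of a prolonged Higgs bundle, the principal parts $u_0\circ v_0$ and $u_\infty\circ v_\infty$ are now full $n\times n$ matrices, in place of the rank-one constants $C_j = \exp P_j$ that produced the momentum factor in the spinless case. I would therefore record, for each of $\eta_0$ and $\eta_\infty$, the matrix entries of these principal parts evaluated at $q_0$ and $q_\infty$ as the coefficients of the corresponding $\sigma$-function entries. The relation $\eta_0(z) = \eta_\infty^{-1}(z + n(\sigma - b))$ of Theorem~\ref{thm:higgs description} continues to exhibit $\eta_0$ and $\eta_\infty$ as a translate of a single matrix $F$ and its inverse, so the composition is still evaluated by Lemma~\ref{lem:matrix product} followed by Hasegawa's identity, Lemma~\ref{lem:intertwining lemma}. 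The analytic part of that computation is literally unchanged and reproduces the factor
\[
\frac{\sigma(z + u/n + q_k - q_{k'})}{\sigma(z)}\prod_{l\neq k}\frac{\sigma(u/n + q_l - q_{k'})}{\sigma(q_l - q_k)}
\]
with $u = n\hbar$ and $\hbar = \sigma - b$.

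What remains, and where I expect the only real difficulty, is the coefficient bookkeeping: I must show that the matrix coefficients attached to $\eta_\infty$ and to $\eta_0$ survive the contraction over the intermediate index in exactly the combination $f_{k,k'} = (u_0(v_0(q_0)))_{k,k'}\cdot(u_\infty(v_\infty(q_\infty)))_{k',k}$. The transposition of indices in the $q_\infty$-factor is the subtle point; I would trace it to the inversion in $\eta_0 \sim \eta_\infty^{-1}$, since passing to the inverse (equivalently, inserting a column of $F(z+u)$ into $F(z)$ as in Lemma~\ref{lem:matrix product}) interchanges the row and column labels of the residue data, which is precisely the swap $(k,k')\mapsto(k',k)$. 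Finally I would check that restricting the framing to rank one collapses $f_{k,k'}$ to the scalar $\exp P_k$, recovering Proposition~\ref{prop:geometric lax matrix}, and that the degeneration argument of Corollary~\ref{cor:singular lax} applies unchanged to yield the rational and trigonometric spin Lax matrices.
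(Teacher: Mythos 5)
Your overall route is the paper's route: restrict to the decomposable locus, observe that the spin framing does not change the line-bundle class (hence the \(\sigma\)-function \(z\)-dependence) of any entry of \(\eta_0\) or \(\eta_\infty\), and read the new scalar data off the principal-part relation \(p.p.\,\eta = u \circ v\) of a prolonged Higgs bundle. That is precisely the published argument, whose entire content is that the entries of the unscaled Higgs fields gain factors from the \(n \times n\) matrices \(W|_{q_0} \overset{v_0}{\lra} V_0 \overset{u_0}{\lra} W|_{q_0}\) and the analogous matrix at \(q_\infty\); your write-up is, if anything, more explicit than the paper's.

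The one step that would fail as stated is the claim that the analytic part is ``literally unchanged'' because Lemma \ref{lem:matrix product} and Lemma \ref{lem:intertwining lemma} still apply. Lemma \ref{lem:matrix product} is general (it is just Cramer's rule), but Hasegawa's identity is an identity for the intertwining matrix \(\Xi(z)\), i.e.\ for coefficient matrices of the special spinless shape: up to the universal trivial theta factors, the free constants depend only on the column index, equivalently the residue \(u \circ v\) has rank one. A full-rank spin residue matrix has \(n^2\) free parameters, so \(\eta_\infty\) is no longer of the form \(D_1\,\Xi(z)\,D_2\) for diagonal \(D_1, D_2\) and any choice of \(\lambda\), and the Cauchy--Frobenius determinant evaluations behind Lemma \ref{lem:intertwining lemma} and Lemma \ref{lem:determinant} do not carry over; the contraction over the intermediate index does not visibly collapse by those lemmas. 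The bookkeeping you correctly identify as the real difficulty should instead be done the way the paper's phrase ``isolating the parts with singularities along \(q_0\) and \(q_\infty\)'' suggests: each entry of \((\eta_0 \otimes id) \circ \eta_\infty\) is a section of the fixed degree-two line bundle \(\OO_E(q_k - q_{k'} + q_0 + q_\infty)\), hence is determined (up to an additive constant on the diagonal) by its principal parts at the two framing points. Those principal parts are \(\eta_0(q_0)\cdot \mathrm{Res}_{q_0}\eta_\infty\) and \(\mathrm{Res}_{q_\infty}\eta_0 \cdot \eta_\infty(q_\infty)\), and comparing them with the spinless computation of Proposition \ref{prop:geometric lax matrix}, using \(\eta_0(z) = \eta_\infty^{-1}(z + n(\sigma - b))\), is what produces both the factor \(f_{k,k'}\) and the index transposition \((k',k)\) that you anticipate from the inversion. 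With that substitution your argument matches the paper's; with the literal appeal to Hasegawa's identity it does not go through.
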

\begin{proof}
	When considering RS spectral sheaves with framing \(V = \OO_{q_0}^n \oplus \OO_{q_\infty}^n\) we have the additional data of \(u_0, v_0, u_\infty, v_\infty\) of Theorem \ref{thm:higgs description} that scales the principal parts of the matrix representations of \(\eta_0\) and \(\eta_\infty\). Isolating the parts with singularities along \(q_0\) and \(q_\infty\) we find that the entries in the unscaled Higgs field \(\eta_0\) gain a factor coming from the \(n \times n\) matrix
	\[
	\mathbb C^n \cong W|_{q_0} \overset{v_0}{\lra} V_0 \overset{u_0}{\lra} W|_{q_0} \cong \mathbb C^n
	\]
	and similarly for \(\eta_\infty\).
\end{proof}
\begin{rmk}
	In the rational limit, our expression for the spin RS Lax matrix matches with that obtained in \cite{Pampolina}. 
\end{rmk}
\section{Flows on the space of RS spectral sheaves} \label{sec:flows}

%In this section we will introduce a set of Hamiltonian flows on the space of RS spectral sheaves \(\mathsf{RS}_{\sigma, n}(E, V)\) and show that they correspond to isomonodromic flows on the space of RS Lax matrices. 

%The flows considered are the so called \emph{tweaking} flows. For a fixed spectral curve \(\widetilde{\Sigma}\) we have the action of the Jacobian \(\Jac \widetilde{\Sigma}\) on the space of spectral sheaves via 
%\[
%\mathscr F \mapsto \mathscr F \otimes \mathcal L.
%\]
%This induces a map of infinitesimal deformations
%\[
%H^1(\widetilde{\Sigma}, \OO_{\widetilde \Sigma}) \lra T_{\mathscr F} \mathsf{RS}_{\sigma,n}(E,V).
%\]
%One way of generating elements of \(H^1(\widetilde{\Sigma}, \OO_{\widetilde \Sigma})\) to induce deformations of \(\mathscr F\) is by tweaking. 

%Let 
%\[
%\mathcal E = \lim_{k} \widehat{\OO}_{Y, D}(kD)
%\]
%be the field of fractions of the completion of the structure sheaf at the divisor \(D\). 

In this section we show that we can realize the Ruijsenaars-Schneider Hamiltonian flows on the moduli space \(\mathsf{RS}_{\sigma, n}(E, V)\) by considering so called tweaking flows on spectral sheaves. These kinds of flows on spectral data of integrable systems were used in \cite{PMIHES_2001__94__87_0} and \cite{Donagi1996} to realize flows of generalized Drinfeld-Sokolov and the KP hierarchies uniformly on the spectral sheaves of the systems. More recently they were used in \cite{MR2377220} to describe the Calogero Moser flows on the space of CM spectral sheaves. Our aim is to mimic this description for Ruijsenaars-Schneider spectral sheaves.

\subsection{Tangent Space to the RS phase space}

Our approach to describing the tangent space to \(\mathsf{RS}_{\sigma, n}(E, V)\) follows the techniques of the description of infinitesimal deformations of Higgs bundles in \cite{doi:10.1112/plms/s3-62.2.275}. 

The space of infinitesimal deformations of the twisted Higgs pair \((\mathcal E, \phi: \mathcal E \lra \mathcal E \otimes \mathcal L)\) on a scheme \(X\), denoted \(D_{(\mathcal E, \phi)}\), fits into the exact sequence
\[
0 \lra H^0(X, \End (\mathcal E)\otimes \mathcal L) \lra D_{(\mathcal E, \phi)} \overset{g}{\lra} H^1(X, \End \mathcal E).
\]
The map \(g\) takes a deformation of the pair \((\mathcal E, \phi)\) and remembers only the deformation of the vector bundle \(\mathcal E\). The kernel of \(g\) is precisely space of deformations of the pair \((\mathcal E, \phi)\) which fix the vector bundle \(\mathcal E\). These correspond to deformations of the Higgs field \(\phi\) which lie in the affine space \(H^0(X, \End (\mathcal E)\otimes \mathcal L)\). 

In order to describe the tangent space to the moduli of Higgs bundles at the pair \((\mathcal E, \phi)\) one needs to further mod out by the infinitesimal action of \(\Aut \mathcal E\) on \(D_{(\mathcal E, \phi)}\). The result obtained in \cite{doi:10.1112/plms/s3-62.2.275} is that the tangent space lies in the 5-term exact sequence 
\begin{align*}
H^0(X, \End \mathcal E) \rightarrow H^0(X, \End (\mathcal E) \otimes \mathcal L) \rightarrow T_{(\mathcal E, \phi)}\mathcal M_{Higgs}(X) \rightarrow \\
H^1(X, \End \mathcal E) \rightarrow H^1(X, \End(\mathcal E) \otimes \mathcal L)
\end{align*}
With the first and last maps induced by commutation with \(\phi\). Roughly, the infinitesmal action of \(\Aut \mathcal E\) acting on the pair \((\mathcal E, \phi)\) preserving the Higgs field will commute with \(\phi\). Similarly, the deformations of the vector bundle \(\mathcal E\) which are deformations of the pair \((\mathcal E, \phi)\) are elements of \(H^1(X, \End \mathcal E)\) which lie in the kernel of the map to \(H^1(X, \End(\mathcal E) \otimes \mathcal L)\). Our aim now is to provide a similar description of the tangent space to the RS phase space in the Hitchin coordinates.

Let \(\widetilde{\mathsf{RS}}_{\sigma, n}(E, V)\) be the auxiliary space of pairs \((\mathscr F, \beta)\) where
\[
\beta: \mathscr F|_{E_0 \cup E_\infty} \overset{\sim}{\lra} V
\]
is a fixed framing isomorphism. The map \(\widetilde{\mathsf{RS}}_{\sigma, n}(E, V) \lra \mathsf{RS}_{\sigma, n}(E, V)\) exhibits \\ \(\widetilde{\mathsf{RS}}_{\sigma, n}(E, V)\) as an \(\Aut(V)\) torsor over \(\mathsf{RS}_{\sigma, n}(E, V)\). Elementary deformation theory implies that the tangent space to the moduli of sheaves on \(S_\sigma\) at \(\mathscr F\) is given by the vector space \(\Ext_{\OO_{S_\sigma}}^1(\mathscr F, \mathscr F)\). The short exact sequence
\[
0 \lra \mathscr F(-E_0 - E_\infty) \lra \mathscr F \lra \mathscr F|_{E_0 \cup E_\infty} \lra 0
\]
induces a long exact sequence
\begin{align*}
	\Hom(\mathscr F, \mathscr F|_{E_0 \cup E_\infty}) \rightarrow \Ext^1(\mathscr F, \mathscr F(-E_0 - E_\infty)) \overset{\cup \theta}{\rightarrow} \\
	\Ext^1(\mathscr F, \mathscr F) \overset{f}{\rightarrow} \Ext^1(\mathscr F, \mathscr F|_{E_0 \cup E_\infty}).
\end{align*}

The rightmost map can be interpreted as restricting the deformation of \(\mathscr F\) to the sections \(E_0\) and \(E_\infty\) of \(S_\sigma\). Hence the deformations of \(\mathscr F\) that do not change the framing with \(V\) are given by the kernel of \(f\), which is precisely the image of \(\Ext^1(\mathscr F, \mathscr F(-E_0 - E_\infty))\) in the vector space \(\Ext^1(\mathscr F, \mathscr F)\). This gives a description of the tangent space \(T_{\mathscr F}\mathsf{RS}_{\sigma, n}(E, V)\) in the standard spectral sheaf coordinates on the RS phase space.
% the tangent space to the tilde \mathsf{RS} space is given simply as Ext^1(F, F(-E_0 -E_\infty)). Add this into the paper? 

In this description, the symplectic structure on the phase space is induced by a section \(\theta\) of \(K^\vee_{S_\sigma} = \OO_{S_\sigma}(E_0 + E_\infty)\) giving a Poisson structure on the surface \(S_\sigma\). Following \cite{MR1346215}, the map in the above exact sequence 
\[
\Ext^1(\mathscr F, \mathscr F(-E_0 - E_\infty)) \overset{\cup \theta}{\lra} \Ext^1(\mathscr F, \mathscr F)
\]
defined by \(\theta\) defines the anchor map of the Poisson structure, and it is an isomorphism onto its image---hence defining a symplectic structure. The pairing defined by this Poisson structure coincides with the Poisson bivector pairing on \(T_{\mathscr F}^*\mathsf{RS}_{\sigma, n}(E,V)\) defined by the composition
\begin{eqnarray*}
	\Ext^1(\mathscr F, \mathscr F(-E_0 - E_\infty)) \otimes \Ext^1(\mathscr F, \mathscr F(-E_0 - E_\infty)) \rightarrow \Ext^2(\mathscr F , \mathscr F(-2 E_0 - 2 E_\infty)) \\
	\overset{\theta}{\rightarrow} \Ext^2(\mathscr F, \mathscr F(-E_0 - E_\infty)) \overset{\Tr}{\rightarrow} H^2(S_\sigma, \OO_{S_\sigma}(-E_0 - E_\infty)) \overset{\Res}{\rightarrow} \mathbb C
\end{eqnarray*}
which is non-degenerate on the dual to the tangent space. 
\begin{rmk}
	In fact the above calculations show that \(\mathsf{RS}_{\sigma,n}(E, V)\) forms a locally closed subscheme in a symplectic leaf of the moduli space of pure 1-dimensional sheaves on \(S_\sigma\) with fixed Hilbert polynomial, where the Poisson structure is induced by the Poisson structure on \(S_\sigma\). 
\end{rmk}

\begin{rmk}
	The standard symplectic structure on the RS phase space, given by the 2-form \(\omega = \sum_i (d \theta_i / \theta_ i) \wedge d q_i\) coincides with the the symplectic structure defined above, at least on a suitable open locus. We can identify an open locus of the phase space of the n-particle RS system with \((S_\sigma^*)^{[n]}\), the Hilbert scheme of n points on \(S_\sigma\). The \(n\) points describe the \(n\) separate positions on \(E\) and momenta living in the open \(\mathbb C^*\)-subbundle \(S_\sigma^* \subset S_\sigma\). In coordinates, the Poisson structure is given by the above formula for \(\theta\). 

	Again following the arguments in Section 4 of \cite{MR1660136}, we may restrict to the open locus \(\mathcal J\) of \(\mathsf{RS}_{\sigma, n}(E, V)\) consisting of RS spectral sheaves of the form \(i_* \mathcal L\) for a line bundle \(\mathcal L\) on an RS spectral curve \(i: \Sigma \hookrightarrow S_\sigma\). A collection of \(n\) points on \(S_\sigma\) defines a unique curve \(\Sigma\) passing through those points, and a line-bundle \(\mathcal L\) which differs from \(\OO_\Sigma\) away from those points. In  \cite{MR1660136} the author proves that this map \((S_\sigma^*)^{[n]} \rightarrow \mathcal J\) is a symplectomorphism on an open locus. It is in this way that we see the compatibility of the above symplectic structure with the classically known symplectic structure on the RS phase space. 
\end{rmk}

We will define the RS tweaking flows in the coordinates of the spectral sheaves, but in order to show the flows are Hamiltonian we need to also understand the tangent space in the coordinates given by Theorem \ref{thm:higgs description}.

We obtain the tangent space description in the Hitchin description by following the outline of the Nitsure calculation. By Theorem \ref{thm:higgs description} an RS spectral sheaf \(\mathscr F\) determines the Higgs data \((W, \eta_0)\) (noting that \(\eta_\infty\) is determined by \(\eta_0\)). Just as before, denote \(D_{(W, \eta_0)}\) the space of deformations of the pair \((W, \eta_0)\), and notice that we can fit the set of deformations into the exact sequence
\[
0 \lra H^0(X, \End(W) \otimes \mathcal L_\sigma^{-1}(D_\infty)) \lra D_{(W, \eta_0)} \overset{f}{\lra} H^1(X, \End(W)(-D_\infty))
\]
where the map \(f\) remembers only the deformations \(\widetilde{W}\) of \(W\) that is required to fit in the exact sequence
\[
0 \lra \widetilde{W} \lra \widetilde{W}' \lra V_\infty \lra 0.
\]

Taking into account the action of \(\Aut_{D_\infty + D_0} W\) of automorphisms which equal the identity on the divisor \(D_\infty + D_0\), a simple cocycle calculation mimicing that of \cite{doi:10.1112/plms/s3-62.2.275} gives us that the tangent space to \(\mathsf{RS}_{\sigma, n}(E, V)\) fits into a 5-term exact sequence as before:

\begin{prop}\label{prop:higgs tangent space}
	Let \(T_{(W, \eta_0, \eta_\infty)}\) denote the tangent space of \(\mathsf{RS}_{\sigma, n}(E, V)\) at the point \((W, \eta_0, \eta_\infty)\). Then \(T_{(W, \eta_0, \eta_\infty)}\) fits into either of the exact sequences
	\begin{align*}
		H^0(E, \End(W)(-D_\infty - D_0)) \overset{[\eta_0, \bullet]}{\rightarrow} H^0(E, \End(W) \otimes \mathcal L_\sigma^{-1}(D_\infty)) \rightarrow T_{(W, \eta_0, \eta_\infty)} \\
		\rightarrow H^1(E, \End(W)(-D_\infty - D_0)) \overset{[\eta_0, \bullet]}{\rightarrow} H^1(E, \End(W) \otimes \mathcal L_\sigma^{-1}(D_\infty))
	\end{align*}
	or 
	\begin{align*}
		H^0(E, \End(W)(-D_\infty - D_0)) \overset{[\eta_\infty, \bullet]}{\rightarrow} H^0(E, \End(W) \otimes \mathcal L_\sigma(D_0)) \rightarrow T_{(W, \eta_0, \eta_\infty)} \\
		\rightarrow H^1(E, \End(W)(-D_\infty - D_0)) \overset{[\eta_\infty, \bullet]}{\rightarrow} H^1(E, \End(W) \otimes \mathcal L_\sigma(D_0)).
	\end{align*}
	Furthermore, the symplectic form at the point \((W, \eta_0, \eta_\infty)\) is given by 
	\begin{align} \label{eqn:symplectic form}
	\omega_{(W, \eta_0, \eta_\infty)}((s_0, \theta_0), (s_\infty, \theta_\infty)) = \Res \Tr(s_0 \circ \eta_0 \circ \theta_\infty - s_\infty \circ \eta_\infty \circ \theta_0)
	\end{align}
	With the pairs \((s_0, \theta_0)\) and \((s_\infty, \theta_\infty)\) representing tangent vectors in the first and second short exact sequences respectively. 
\end{prop}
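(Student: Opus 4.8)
The plan is to identify the tangent space $T_{(W,\eta_0,\eta_\infty)}$ with the first hypercohomology $\mathbb H^1(C_0^\bullet)$ of the two-term complex on $E$
\[
C_0^\bullet:\quad \End(W)(-D_\infty - D_0) \overset{[\eta_0, \bullet]}{\lra} \End(W)\otimes \mathcal L_\sigma^{-1}(D_\infty),
\]
placed in degrees $0$ and $1$, and then to read the two five-term sequences off the standard hypercohomology long exact sequence, mimicking the Higgs-bundle deformation computation of \cite{doi:10.1112/plms/s3-62.2.275}. The degree-zero term records infinitesimal automorphisms of $W$ that are trivial on $D_\infty + D_0$ (whence the twist by $-D_\infty - D_0$), the degree-one term records deformations of the meromorphic Higgs field $\eta_0$ (whence the twist by $D_\infty$), and the differential $[\eta_0,\bullet]$ is well defined because a commutator of $s \in \End(W)(-D_\infty-D_0)$ with $\eta_0$ lands in $\End(W)\otimes\mathcal L_\sigma^{-1}(-D_0)$, which includes into the stated target $\End(W)\otimes\mathcal L_\sigma^{-1}(D_\infty)$. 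Establishing the identification $T_{(W,\eta_0,\eta_\infty)}\cong\mathbb H^1(C_0^\bullet)$ is where the genuine cocycle bookkeeping lives: one starts from the exact sequence for $D_{(W,\eta_0)}$ already recorded above, refines the base-change map $f$ so that its target is cut down from $H^1(\End(W)(-D_\infty))$ to $H^1(\End(W)(-D_\infty-D_0))$ once the framing at $E_0$ is also fixed, and then quotients by the infinitesimal action of $\Aut_{D_\infty + D_0} W$.

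Granting this, the first five-term sequence is exactly the long exact sequence
\[
H^0(C_0^0)\to H^0(C_0^1)\to \mathbb H^1(C_0^\bullet)\to H^1(C_0^0)\to H^1(C_0^1)
\]
of the complex $C_0^\bullet$, with the two outer maps induced by $[\eta_0,\bullet]$. The second five-term sequence is obtained in the same way from the companion complex
\[
C_\infty^\bullet:\quad \End(W)(-D_\infty - D_0) \overset{[\eta_\infty, \bullet]}{\lra} \End(W)\otimes \mathcal L_\sigma(D_0).
\]
The point is that $C_0^\bullet$ and $C_\infty^\bullet$ compute the \emph{same} tangent space: by Theorem \ref{thm:higgs description} the field $\eta_\infty$ is determined by $\eta_0$ through the shift $\eta_0(z)=\eta_\infty^{-1}(z+n(\sigma-b))$, so a deformation of $(W,\eta_0)$ is literally the same datum as a deformation of $(W,\eta_\infty)$. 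This also explains why the degree-zero term $\End(W)(-D_\infty-D_0)$ is common to both sequences, and why a single tangent vector admits a representative $(s_0,\theta_0)$ from the first and a representative $(s_\infty,\theta_\infty)$ from the second.

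For the symplectic form I would follow \cite{MR1346215}: the symplectic structure on $S_\sigma$ is the section $\theta$ of $\OO_{S_\sigma}(E_0+E_\infty)$, and the resulting pairing on $\Ext^1(\mathscr F,\mathscr F(-E_0-E_\infty))$ has already been written above as a composition of cup product, multiplication by $\theta$, trace, and residue. The task is to transport this pairing across the Koszul equivalence of Proposition \ref{prop:Koszul duality} into the Hitchin coordinates. Representing one tangent vector by $(s_0,\theta_0)$ from $C_0^\bullet$ and the other by $(s_\infty,\theta_\infty)$ from $C_\infty^\bullet$, the cup product pairs the class $s_0\in H^1(\End(W)(-D_\infty-D_0))$ against $\theta_\infty\in H^0(\End(W)\otimes\mathcal L_\sigma(D_0))$ by inserting $\eta_0$ to absorb the divisor twist (landing in $H^1(\End(W))$), and symmetrically pairs $s_\infty$ against $\theta_0$ through $\eta_\infty$; applying $\Tr$ and the residue identification $H^1(E,\OO_E)\cong\mathbb C$ then yields
\[
\Res\Tr\bigl(s_0\circ \eta_0\circ \theta_\infty - s_\infty\circ \eta_\infty\circ \theta_0\bigr),
\]
the relative sign enforcing antisymmetry. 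I expect this last matching to be the main obstacle: verifying that the abstract residue pairing on $S_\sigma$ really becomes the explicit trace-residue expression on $E$, with the correct sign and with $\eta_0,\eta_\infty$ inserted to balance the twists, requires chasing the two short exact sequences of Proposition \ref{prop:Koszul duality} through the cup-product and Serre-duality identifications rather than invoking any single clean functorial statement.
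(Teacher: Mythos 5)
Your identification of the tangent space with the first hypercohomology of the two-term complex \(C_0^\bullet\) (and its companion \(C_\infty^\bullet\)), followed by the five-term long exact sequence, is exactly the route the paper records: its primary argument is a cocycle computation in the style of \cite{doi:10.1112/plms/s3-62.2.275}, but it explicitly offers your hypercohomology complex as an equivalent alternative, extracting the five-term sequence from a short exact sequence of complexes; your observation that \(\eta_\infty\) is determined by \(\eta_0\) (so both complexes compute the same space) also matches the paper. The only real divergence is in the symplectic form. You propose to derive formula \eqref{eqn:symplectic form} by transporting the pairing on \(\Ext^1(\mathscr F, \mathscr F(-E_0 - E_\infty))\) of \cite{MR1346215} across the Koszul equivalence of Proposition \ref{prop:Koszul duality}, and you correctly flag that transport as the hard step --- but you leave it open. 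The paper avoids it entirely: it verifies directly, in the Hitchin coordinates, that the trace-residue expression is well defined on the tangent space because it vanishes on deformations of the form \([\phi, \eta_0]\) or \([\phi, \eta_\infty]\) with \(\phi \in H^1(E, \End(W)(-D_\infty - D_0))\) (so it descends through the five-term sequence), and that it is non-degenerate because \(\eta_0\) and \(\eta_\infty\) are generically full rank on \(E\). That direct check is all the proposition as stated requires; your transport argument would prove the stronger statement that the Hitchin-coordinate form agrees with the sheaf-level symplectic structure on \(S_\sigma\), which the paper asserts in the surrounding discussion but does not re-derive inside this proof. To close your proposal, either carry out that comparison in full or replace your final step with the paper's direct verification of antisymmetry, descent, and non-degeneracy.
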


\begin{proof}
	The proof that the tangent space fits into the 5-term exact sequence can be gathered from a detailed calculation via cocycles. The infinitesimal action of \(\Aut_{D_\infty + D_0} W\) fixing the data \((W, \eta_0, \eta_\infty)\) commutes with the Higgs fields, and hence the map \[H^0(E, \End(W)(-D_\infty - D_0)) \lra D_{(W, \eta_0, \eta_\infty)}\] factors through \(H^0(X, \End(W) \otimes \mathcal L_\sigma^{-1}(D_\infty))\) (and the corresponding space of global sections for the Higgs field \(\eta_\infty\)). Also the deformations of \(W\) that fix the framing along \(D_\infty + D_\infty\) of the Higgs field must correspond to classes in \(H^1(E, \End(W)(-D_\infty - D_0))\) that commute with \(\eta_0\) (and \(\eta_\infty\)).

	An alternative approach comes from the general result that the tangent space to the moduli space of \(\mathcal L_\sigma^{-1}(D_\infty)\)-twisted Higgs bundles with fixed framing along \(D_\infty + D_0\) corresponds to the first hypercohomology of the 2-term complex:
	\[
	C^\bullet: \,\,\, \End(W)(-D_\infty - D_0) \lra \End(W) \otimes \mathcal L_\sigma^{-1}(D_\infty).
	\]
	To compare this result with the statement of the proposition take the cohomology long exact sequence of the short exact sequence of complexes
	\begin{center}
	\begin{tikzcd}
	0 \arrow[d] &  0 \arrow[d] \arrow[r] &  0 \arrow[d]\\
	A^\bullet: \arrow[d] & 0 \arrow[r] \arrow[d] & \End(W) \otimes \mathcal L_\sigma^{-1}(D_\infty) \arrow[d] \\
	C^\bullet: \arrow[d]& \End(W)(-D_\infty - D_0) \arrow[r] \arrow[d] & \End(W) \otimes \mathcal L_\sigma^{-1}(D_\infty) \arrow[d] \\
	B^\bullet: \arrow[d] & \End(W)(-D_\infty - D_0) \arrow[r] \arrow[d] & 0 \arrow[d] \\
	0 & 0 \arrow[r] & 0.
	\end{tikzcd}
	\end{center}

	For a fixed Higgs field \(\eta_0 \in H^0(E, \End(W) \otimes \mathcal L^{-1}_\sigma(D_\infty))\) the pairing 
	\begin{align*} 
		H^1(E, \End(W)(-D_\infty - D_0)) \otimes H^0(E, \End(W) \otimes \mathcal L_\sigma^{-1}(D_\infty)) \lra \mathbb C
	\end{align*}
	given by \((s_0, \theta_\infty) = \Res \Tr(s_0 \circ \eta_0 \circ \theta_\infty)\)is non-degenerate because \(\eta_0\) is generically full rank on \(E\). Similarly we have that the pairing \((s_\infty, \theta_0) = \Res \Tr(s_\infty \circ \eta_\infty \circ \theta_0)\) is non-degenerate.

	Furthermore the trace-residue pairing vanishes on deformations of the Higgs field of the form \([\phi, \eta_0]\) or \([\phi, \eta_\infty]\) for a \(\phi \in H^1(E, \End(W)(-D_\infty - D_0))\) so the antisymmetric form given in equation \eqref{eqn:symplectic form} is non-degenerate and hence defines a symplectic form on \(\mathsf{RS}_{\sigma, n}(E, V)\).
\end{proof}
\subsection{Tweaking flows on the RS phase space} We now turn to the description of a set of natural flows on the RS phase space that will yield the RS Hamiltonian flows by Proposition \ref{prop: tweaking theorem}. 

First in a broader setting, on moduli spaces of sheaves \(\mathcal M(X)\) on a scheme \(X\) there exists an action of the Picard group of \(X\) by tensoring \(\mathscr F \mapsto \mathscr F \otimes \mathcal L\). The induced infinitesimal action yields a map from the tangent space to the identity of \(\Pic X\) to the tangent space to \(\mathscr F\)
\[
T_{\OO_X} \Pic X \cong H^1(X, \OO_X) \lra T_{\mathscr F} \mathcal M(X) \cong \Ext^1(\mathscr F, \mathscr F)
\]
which can be realized by tensoring the short exact sequence 
\[
0 \lra \OO_X \lra \mathcal D \lra \OO_X \lra 0
\]
representing a tangent vector in \(H^1(X, \OO_X)\) by the sheaf \(\mathscr F\). 

One way of generating elements of \(H^1(X, \OO_X)\) is by considering the image of the local cohomology group \(H^1_D(X, \OO_X)\) for a divisor \(D\) of \(X\). In this case we can concretely realize the deformation of \(\mathscr F\) by taking a representative of \(H^1_D(X, \OO_X) \cong \Hom(\OO_X, \widehat{\OO}_X(\infty D)/\widehat{\OO}_X)\) and tensoring with \(\mathscr F\) to obtain a map
\[
\mathscr F \lra \widehat{\mathscr F}(\infty D)/\widehat{\mathscr F}
\]
and pulling back along the short exact sequence
\[
0 \lra \mathscr F \lra \mathscr F(\infty D) \lra \widehat{\mathscr F}(\infty D)/\widehat{\mathscr F} \lra 0
\]
to obtain an element of \(\Ext^1(\mathscr F, \mathscr F)\). Let \(\mathcal E\) be algebra of Laurent series along the divisor \(D\). Specifically
\[
\mathcal E = \lim_{\lra} \widehat{\OO}_{X, D}(k D).
\]
In this way we can write \(\widehat{\mathscr F}(\infty D) = \mathscr F_{\mathcal E} = \mathscr F \otimes \mathcal E\). 
If the sheaf \(\mathscr F\) is a vector bundle, there is an informal way of interpreting the flows obtained in this way that offer a useful intuition on the construction. We can obtain \(\mathscr F\) by considering its restrictions to \(X \setminus D\) and the completed formal scheme \(\widehat{D}\), and gluing along the the formal punctured disk \(\Spec \mathcal E\) as spelled out in \cite{beauville1994}. Define an element \(\xi \in \Hom(\mathscr F, \widehat{\mathscr{F}}(\infty D)/\widehat{\mathscr F})\) as coming from an element \(\xi \in \End_\mathcal E(\mathscr F_{\mathcal E})\) by twisting the natural inclusion \(\mathscr F \hookrightarrow \widehat{\mathscr{F}}(\infty D)\) by \(\xi\). Then the deformation of \(\mathscr F\) defined by \(\xi\) can be informally interpreted as twisting the gluing between \(\mathscr F|_{X \setminus D}\) and \(\mathscr F|_{\widehat{D}}\) along \(\Spec \mathcal E\) by the element \(1 + \varepsilon \cdot \xi\) over the dual numbers \(\mathbb C[\varepsilon]/(\varepsilon^2)\). 

As described above, these flows are used to describe large classes of integrable systems in natural algebro-geometric language. In \cite{MR2377220} the authors introduce a modification of the construction of tweaking flows to deal with sheaves framed along a divisor \(D \subset X\). In this case, line bundles with trivialization along \(D\) act on framed sheaves, hence we get an infinitesimal action by the vector space \(H^1(X, \OO_X(-D))\).  Following loc. cit in order to define flows for RS spectral sheaves, we should therefore consider the tweaking flows along the divisor \(E_0 + E_\infty\) of \(S_\sigma\). In particular, functions in \(\mathcal E\) which represent germs of meromorphic functions along \(E_0 + E_\infty\) will act in a way that does not change the framing of RS spectral sheaves \(\mathscr F\) along \(E_0\) and \(E_\infty\). 

\begin{defn}\label{def:RS spectral flows}
	The sheaf \(\underline{\End}_{\mathcal E}\) on \(\mathsf{RS}_{\sigma,n}(E, V)\) with fiber \(\End_{\mathcal E}(\mathscr F_{\mathcal E})\) over \(\mathscr F\) has an anchor map to the tangent sheaf \(\underline{\Ext}^1\)
	\[
	\rho: \underline{\End}_{\mathcal E} \lra \underline{\Ext}^1
	\]
	to the tangent sheaf given by the above formula
	\begin{align*}
		&\End_{\mathcal E}(\mathscr F) \lra \Hom(\mathscr F, \mathscr F_{\mathcal E}/\mathscr F) \lra \Ext^1(\mathscr F, \mathscr F) \\
		& \xi \mapsto \{s \mapsto \xi(s_{\mathcal E}) \mod \mathscr F\}.
	\end{align*}
	This exhibits \(\underline{\End}_{\mathcal E}\) as the Lie algebroid of \emph{RS tweakings}. And we call the image of \(\mathcal E \hookrightarrow \underline{\End}_{\mathcal E}\) the algebroid of \emph{central} RS tweakings.
\end{defn}

In a general setting where the moduli space of sheaves on \(X\) describes sheaves that are supported on spectral schemes \(\widetilde{\Sigma} \subset X\), we can refine the above Picard group action to an action by the Picard groups of the spectral schemes. For example, by the discussion in Section \ref{sec:RS phase space} we can view the moduli space of Higgs bundles \(\mathcal M_{Higgs}(\Sigma)\) as representing the moduli problem of Hitchin spectral sheaves of the form \(i_*(\mathcal L)\) where \(i: \widetilde{\Sigma} \hookrightarrow T^*\Sigma\) is an inclusion of a Hitchin spectral curve, and \(\mathcal L \in \Pic \widetilde{\Sigma}\). In the Hitchin setting, or in the more general setup of sheaves supported on spectral schemes, we have a map induced by the infinitesimal action
\[
	H^1(\widetilde{\Sigma}, \OO_{\widetilde{\Sigma}}) \lra T_{\mathscr F}\mathcal M(X)
\]
where \(\widetilde{\Sigma} = \supp \mathscr F\). Or in the case of spectral sheaves framed along a divisor in \(X\), we get an infinitesimal action 
\[
	H^1(\widetilde{\Sigma}, \OO_{\widetilde{\Sigma}}(-D)) \lra T_{\mathscr F}\mathcal M_{fr}(X)
\]
where \(D\) is the divisor on \(\Sigma\) obtained by intersection with the framing divisor in \(X\). 

The moduli space \(\mathsf{RS}_{\sigma, n}(E, V)\) fits this criterion. We now turn to an explicit description of a particular class of deformations of framed RS sheaves. For a choice of framing \(V = V_0 \oplus V_\infty\), let \(D_0 + D_\infty = \sum_p [p]\) be the support divisor on \(E_0, E_\infty \cong E\). Consider \(\bigoplus_p \widehat{F}\) the algebra of germs of meromorphic functions along the fibers above the divisor \(D_0 + D_\infty\). Any germ \(\xi_p \in \widehat{F}_p\) restricts to an RS spectral curve \(\widetilde \Sigma = \supp \mathscr F\) and define a framed deformation of \(\mathscr F\). Furthermore, these deformations land in the algebroid of central RS tweakings, so in particular define a commutative hierarchy.

\begin{defn}\label{def: RS hierarchy}
	For each \(p \in (D_0 + D_\infty)\), let \(x_p\) be a local coordinate near the fiber \(F_p \subset S_\sigma\), and consider the polynomial subalgebra \(\mathbb C[x_p] \subset \OO_{\widehat{F}_p}\). The \emph{framed RS hierarchy} is the central subalgebroid of the algebroid of RS tweakings generated by the restrictions of \(x_p^i\) for all \(p\), and \(i \in \mathbb Z_{\geq 0}\)
\end{defn}

In order to describe the action of the framed RS hierarchy, we need to understand the tweaking action on spectral sheaves as a Hitchin type flow. The hierarchy will descend to deformations of the Higgs fields associated to spectral sheaves, and we now turn to describing the induced action. 

The last ingredient before we can turn to the main theorem of this section is the map \(\mathbf{H}\) which will exhibit \(\mathsf{RS}_{\sigma, n}(E, V)\) as an algebraically completely integrable Hamiltonian system.

Let 
\[
\Hitchin_{RS}(E) = \bigoplus_{i=0}^\infty H^0(E, \OO_E(D_0 + D_\infty)^{\otimes(i+1)})
\]
be the Hitchin base for the RS phase space. Define a map
\[
\mathbf H: \mathsf{RS}_{\sigma, n}(E, V) \lra \Hitchin_{RS}(E)
\]
which in components is given by
\[
H_i(W, \eta_0, \eta_\infty) = \frac{1}{i+1} \Tr\left( (\eta_0 \cdot \eta_\infty)^i\right).
\]

\begin{prop}\label{prop: tweaking theorem}
	The vector fields of the framed RS hierarchy of Definition \ref{def: RS hierarchy} agree with the Hamiltonian vector fields defined by the map \(\mathbf H\). 
\end{prop}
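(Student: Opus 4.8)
The plan is to prove the two families of vector fields coincide by pairing each against the symplectic form $\omega$ of Proposition~\ref{prop:higgs tangent space} and checking they determine the same cotangent direction at every point. Writing $\rho(\xi)$ for the tweaking field attached to a central germ $\xi$ by the anchor map of Definition~\ref{def:RS spectral flows}, and $X_{H}$ for the Hamiltonian field of a component $H$ of $\mathbf H$, the whole statement reduces to the identity
\[
\omega\bigl(\rho(x_p^{\,i}),\,Y\bigr) = \delta H(Y)\qquad\text{for all tangent vectors } Y,
\]
where $H$ is the component of $\mathbf H$ residue-dual to $x_p^{\,i}$. Both sides are trace--residue expressions, so the proof becomes a local residue computation concentrated along $E_0\cup E_\infty$, where the germs $\xi$ and the poles of $\eta_0,\eta_\infty$ both live.

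First I would record the isospectral direction, which is the easy half. A central tweaking deforms only the line bundle $\mathscr L$ on the fixed support curve $\widetilde\Sigma=\supp\mathscr F$ and leaves $\widetilde\Sigma$, hence its defining equation, unchanged; therefore $\rho(\xi)$ annihilates every component of $\mathbf H$ and is tangent to its fibres. The fields $X_H$ are tangent to the same fibres because the $H_i$ Poisson--commute. This both reduces the comparison to matching two families of fibrewise vector fields and guarantees that in the pairing above only the behaviour of $Y$ near $D_0\cup D_\infty$ survives.

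Next I would translate the tweaking deformation into the Higgs description of Theorem~\ref{thm:higgs description}. The key input is that the vertical coordinate $x$ on the $\PP^1$-bundle $S_\sigma$ acts on $\mathscr F$ through the Lax composition, so that $x|_{\widetilde\Sigma}$ is the eigenvalue of $\eta_0\circ\eta_\infty$ by Proposition~\ref{prop:geometric lax matrix}; consequently tweaking by $x_p^{\,i}$ is represented, in the notation of Proposition~\ref{prop:higgs tangent space}, by a cocycle assembled from $(\eta_0\eta_\infty)^{i}$ and supported at the points of $D_0\cup D_\infty$, and I would read off the resulting pair $(s,\theta)$. On the Hamiltonian side, varying the section $H_i=\tfrac1{i+1}\Tr\bigl((\eta_0\eta_\infty)^i\bigr)$ gives $\delta H_i=\Tr\bigl((\eta_0\eta_\infty)^{i-1}\,\delta(\eta_0\eta_\infty)\bigr)$, which I would rewrite through the two five-term sequences so that it is paired against an arbitrary $(s_\infty,\theta_\infty)$.

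The final step is the matching. Substituting the tweaking cocycle into the form~\eqref{eqn:symplectic form} and pushing the trace from $E$ onto the cover via $\Tr=\pi_*$ for $\pi\colon\widetilde\Sigma\to E$, the pairing $\omega(\rho(x_p^{\,i}),Y)$ becomes a sum of residues on $\widetilde\Sigma$ of $x^i$ against the $H^0(\widetilde\Sigma,K)$-class cut out by $Y$, which by Serre duality on $\widetilde\Sigma$ is exactly $\delta H_i(Y)$. I expect the main obstacle to be precisely this passage between the two descriptions of the tangent space: the tweaking flow is defined on the spectral-sheaf ($\Ext^1$) side, whereas $\omega$ is written on the Higgs side, and bridging them forces one to track the framing data $(u_0,v_0,u_\infty,v_\infty)$ and to apply Grothendieck duality (the residue theorem on $\widetilde\Sigma$) to convert the base-level trace--residue pairing into a residue pairing on the cover. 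The framed bookkeeping---verifying that central germs along the fibres keep the deformation inside $\Ext^1(\mathscr F,\mathscr F(-E_0-E_\infty))$ and so act trivially on $V$---is the technical heart of the argument.
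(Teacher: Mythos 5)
Your proposal is correct and follows essentially the same route as the paper's proof: the paper likewise computes \(dH_\xi\) over the dual numbers, solves \(-\omega((s,\theta),(s_\xi,\theta_\xi)) = dH_\xi(s,\theta)\) to find that the Hamiltonian vector field is the pure bundle deformation \((s_\xi,\theta_\xi) = (\xi(\eta_0\cdot\eta_\infty)^i,\,0)\), and then identifies the tweaking flow with this same class by observing that on \(S_\sigma^*\) the sheaf \(\mathscr F\) is an \(\OO_{S_\sigma^*}\)-module via \(\eta_0\cdot\eta_\infty\), so tweaking by \(\tilde\xi\) multiplies the module structure by \(\tilde\xi\) and pushes forward to the cocycle \(\xi(\eta_0\cdot\eta_\infty)^i\) with no deformation of the Higgs fields. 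Your central step---reading off the tweaking deformation in the Higgs coordinates as a cocycle built from \((\eta_0\eta_\infty)^i\)---is exactly the paper's, and the remaining differences are cosmetic: you verify the defining identity \(\omega(\rho(\xi),Y)=dH(Y)\) directly and phrase the final matching as a residue/Serre-duality computation on \(\widetilde\Sigma\), whereas the paper performs the equivalent trace--residue computation entirely on the base curve \(E\) (note also that the fact that the fiber coordinate acts through \(\eta_0\circ\eta_\infty\) comes from the Koszul-duality construction of Proposition \ref{prop:Koszul duality} rather than from the coordinate formula of Proposition \ref{prop:geometric lax matrix}).
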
	
\begin{proof}
	We will prove the theorem by calculating the flows given by homogeneous elements of \(\Hitchin_{RS}(E)\). Fix a \(\xi \in H^0(E, \OO_E(D_0 + D_\infty)^i)^* \cong H^1(E, \OO_E(-D_0-D_\infty)^i)\) where the duality is given by the usual residue pairing. Therefore the function on \(\mathsf{RS}_{\sigma, n}(E,V)\) defined by \(\xi\) is given by
	\[
	H_\xi(W, \eta_0, \eta_\infty) = \Res\left(\xi \Tr \left( \frac{1}{i+1} (\eta_0 \cdot \eta_\infty)^{i+1}\right)\right).
	\]

	Furthermore let \((s, \theta) \in H^1(E, \End(W)(-D_\infty - D_0) \oplus H^0(E, \End(W) \otimes \mathcal L_\sigma(D_0))\) represent a tangent vector \(V_{(s, \theta)}\) in \(T_{(W, \eta_0, \eta_\infty)}\). A simple calculation over \(\mathbb C[\varepsilon]/(\varepsilon^2)\) yields
	\begin{align*}
		dH_\xi|_{(W, \eta_0, \eta_\infty)}(s, \theta) &= V_{(s, \theta)}H_\xi(W, \eta_0, \eta_\infty) = \left. \frac{d}{d \varepsilon}\right|_{\varepsilon = 0}H_\xi(W, \eta_0 + \varepsilon \theta, \eta_\infty) = \\
		& = \left. \frac{d}{d \varepsilon}\right|_{\varepsilon = 0} \Res( \xi \Tr(\frac{1}{i+1}((\eta_0 + \varepsilon \theta) \cdot \eta_\infty)^{i+1})) \\
		& = \left. \frac{d}{d \varepsilon} \right|_{\varepsilon = 0} \Res( \xi \Tr((\eta_0 \cdot \eta_\infty)^{i+1} + \varepsilon (i+1)(\eta_0 \cdot \eta_\infty)^i(\theta \eta_\infty))) \\
		& = \Res( \xi \Tr ((\eta_0 \cdot \eta_\infty)^i (\eta_\infty \theta)))
	\end{align*}
	Now we calculate the Hamiltonian vector field corresponding to \(\xi\) at \((W, \eta_0, \eta_\infty)\), denoted \(v_\xi|_{(W, \eta_0, \eta_\infty)} = (s_\xi, \theta_\xi)\). The Hamiltonian vector field is a solution to the equation 
	\[
	-\omega|_{(W, \eta_0, \eta_\infty)}((s, \theta), (s_\xi, \theta_\xi)) = dH_\xi|_{(W, \eta_0, \eta_\infty)}(s, \theta)
	\]
	for every \((s, \theta)\). Equating the two sides
	\[
	- \Res \Tr ( s \cdot \eta_0 \cdot \theta_\xi - s_\xi \cdot \eta_\infty \cdot \theta) = \Res (\xi \Tr ((\eta_0 \cdot \eta_\infty)^i(\eta_\infty \theta)))
	\]
	we find that the unique solution is \((s_\xi, \theta_\xi) = (\xi (\eta_0 \cdot \eta_\infty)^i, 0)\). 

	On the tweaking flow side of the story, let \(\tilde \xi\) represent a homogeneous degree \(i\) element of \(\mathbb C[x_p] \subset \OO_{\widehat{F}_p}\) which descends to the deformation defined by \(\xi\) in \(\Hitchin_{RS}(E)\). We realize the RS spectral sheaf \(\mathscr F\) restricted to the \(\mathbb C^*\) bundle \(S_\sigma^*\) as a \(\OO_{S_\sigma^*}\)-module via the endomorphism \(\eta_0 \cdot \eta_\infty\). Hence the tweaking flow modifies the \(\OO_{S_\sigma^*}\)-module structure by multiplication by \(\tilde \xi\). 

	Therefore, after pushing forward the deformation defined by \(\tilde \xi\) to a deformation of \((W, \eta_0, \eta_\infty)\), the vector field on \(\mathsf{RS}_{\sigma, n}(E, V)\) defined by \(\tilde \xi\) yields the same element \(\xi (\eta_0 \cdot \eta_\infty)^i \in \End(W)(-D_0 - D_\infty)\). 
\end{proof}

\begin{cor}
	For spin framing \(V = \OO_{q_0}^k \oplus \OO_{q_\infty}^k\), the flows of the spin RS hierarchy agree with the flows defined by the spin RS Hamiltonians. 
\end{cor}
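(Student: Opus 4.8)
The plan is to obtain the corollary by assembling Proposition \ref{prop: tweaking theorem} with Corollary \ref{cor:spin Lax matrix}, the symplectic comparison supplying the passage from Hamiltonian functions to Hamiltonian flows. First I would observe that Proposition \ref{prop: tweaking theorem} is proved for an arbitrary framing sheaf \(V\), so it specializes without change to \(V = \OO_{q_0}^k \oplus \OO_{q_\infty}^k\). For this framing the framed RS hierarchy of Definition \ref{def: RS hierarchy} is exactly the spin RS hierarchy, and the proposition asserts that its vector fields coincide with the Hamiltonian vector fields---taken with respect to the symplectic form \(\omega\) of Proposition \ref{prop:higgs tangent space}---of the functions \(H_i(W, \eta_0, \eta_\infty) = \tfrac{1}{i+1}\Tr\big((\eta_0 \cdot \eta_\infty)^i\big)\).

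Next I would feed in the identification supplied by Corollary \ref{cor:spin Lax matrix}, whose argument computes \((\eta_0 \otimes \mathrm{id}) \circ \eta_\infty\) on the locus of decomposable bundles as the spin RS Lax matrix \(L^{\mathrm{spin}}_{RS}(z)\). Substituting, the generating functions become \(H_i = \tfrac{1}{i+1}\Tr\big(L^{\mathrm{spin}}_{RS}(z)^i\big)\), so that the coefficients appearing in the spectral-parameter expansion of these sections are precisely the spin RS integrals of motion recalled in Section \ref{subsec: recollections on RS}; in particular the combination giving \(\Tr(L + L^{-1})\) returns the spin RS Hamiltonian \(H_{RS}\). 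Hence, as functions on phase space, the generators driving the tweaking flows agree with the spin RS Hamiltonians.

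To promote this equality of Hamiltonian functions to an equality of flows I would invoke the earlier identification of \(\omega\) with the standard RS two-form \(\sum_i (d\theta_i/\theta_i) \wedge dq_i\), so that the Hamiltonian vector fields computed against \(\omega\) are literally the classical spin RS flows. The one point demanding care---and the place I expect the argument to be most delicate---is the bookkeeping that matches the spectral-parameter expansion of the line-bundle-valued traces \(\Tr\big((\eta_0 \cdot \eta_\infty)^i\big)\) with the classical basis of conserved quantities and pins down the normalization of \(H_{RS}\). Since both the Lax identification (Corollary \ref{cor:spin Lax matrix}) and the isospectrality built into Proposition \ref{prop: tweaking theorem} are already available, this reduces to tracking constants rather than any new computation, and assembling the three steps then yields the corollary.
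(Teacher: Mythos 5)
Your proposal is correct and follows essentially the same route as the paper: the paper's proof likewise combines Proposition \ref{prop: tweaking theorem} with the geometric identification of \((\eta_0 \otimes id) \circ \eta_\infty\) as the RS Lax matrix, concluding that the \(H_i\) are the RS Hamiltonians and hence that the tweaking flows are the RS flows. If anything your version is slightly more careful than the paper's, which cites the spinless Proposition \ref{prop:geometric lax matrix} where your appeal to Corollary \ref{cor:spin Lax matrix} (and your explicit use of the remark identifying \(\omega\) with \(\sum_i (d\theta_i/\theta_i)\wedge dq_i\)) is the more precise reference for the spin framing.
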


\begin{proof}
	By Proposition \ref{prop:geometric lax matrix}, the composition \(\eta_0 \circ \eta_\infty\) is the Lax matrix for the RS system. Hence the function \(H_i = \Tr (\eta_0 \cdot \eta_\infty)^i\) is the \(i\)th Hamiltonian for the RS system, and the Hamiltonian vector fields defined by the RS hierarchy match up with the vector fields coming from the RS Hamiltonians. 
\end{proof}

In particular, the above corollary yields the desired spectral description of the RS phase space. 

\begin{thm}\label{thm:main theorem}
	The moduli space \(\mathsf{RS}_{\sigma, n}(E, \OO_{q_0}^k \oplus \OO_{q_\infty}^k)\) is isomorphic to a completion of the spin Ruijsenaars-Schneider phase space, with RS flows given by tweaking flows on RS spectral sheaves. 
\end{thm}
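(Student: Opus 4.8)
The plan is to assemble the structural results already in hand into a single statement: Theorem~\ref{thm:higgs description}, Corollary~\ref{cor:spin Lax matrix}, and the corollary to Proposition~\ref{prop: tweaking theorem} supply essentially all the content, so what remains is to organize them into an honest symplectic isomorphism onto a completion and to identify the open dense locus with the classical spin RS phase space. First I would pin down coordinates on a generic open locus. By Theorem~\ref{thm:higgs description}, a framed RS spectral sheaf with framing \(V = \OO_{q_0}^k \oplus \OO_{q_\infty}^k\) is the same data as a semistable bundle \(W\) with prolonged Higgs fields \((\eta_0, \eta_\infty)\) obeying the translation relation, together with the framing morphisms \((u_0, v_0, u_\infty, v_\infty)\). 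On the locus where \(W \cong \bigoplus_i \OO_E(q_i - b)\) with the \(q_i\) distinct, the summands record the particle positions \(q_i \in E\); by Corollary~\ref{cor:spin Lax matrix} the composition \(\eta_0 \circ \eta_\infty\) is exactly the spin RS Lax matrix, whose exponential factors \(\exp P_i\) supply the rapidities \(e^{\theta_i} \in \mathbb{C}^*\) and whose factors \(f_{i,j}\) encode the spin data \(u_i \in \mathbb{C}^k\), \(v_i \in (\mathbb{C}^k)^*\) through the framing morphisms. This produces a morphism from the generic locus to the spin RS phase space, and I would construct its inverse by reading the same formulas backward, building \(W\) and the two Higgs fields directly from \((q_i, \theta_i, u_i, v_i)\) and verifying the translation relation of Theorem~\ref{thm:higgs description}.

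Next I would check that this bijection is a symplectomorphism. The symplectic form in the Hitchin coordinates is the explicit trace--residue pairing of Proposition~\ref{prop:higgs tangent space}, equation~\eqref{eqn:symplectic form}, while the standard RS form is \(\omega = \sum_i (d\theta_i/\theta_i) \wedge dq_i\) augmented by the spin contribution. I would evaluate the former in the decomposable coordinates \((q_i, \theta_i, u_i, v_i)\) and match it with the latter. The remark identifying an open locus of \(\mathsf{RS}_{\sigma, n}(E, V)\) with the Hilbert scheme \((S_\sigma^*)^{[n]}\) via \cite{MR1660136}, where the Poisson structure induced by \(\theta\) is manifestly the canonical one, furnishes this comparison and shows the two forms agree on a dense open set, hence wherever both are defined.

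Finally I would address the two remaining words in the statement. For the flows, the corollary to Proposition~\ref{prop: tweaking theorem} already shows that the central tweaking flows of the framed RS hierarchy coincide with the Hamiltonian vector fields of \(H_i = \tfrac{1}{i+1}\Tr(\eta_0 \eta_\infty)^i\), which by Corollary~\ref{cor:spin Lax matrix} are precisely the spin RS Hamiltonians; thus the RS flows are tweaking flows as claimed. For the completion, I would observe that the generic locus above is open and dense, and that the Hitchin map \(\mathbf{H}\) of Proposition~\ref{prop: tweaking theorem} realizes the entire moduli space as an algebraically completely integrable system whose fibers are the (compactified) Jacobians of the RS spectral curves; the non-generic points---where \(W\) fails to decompose, where particles collide, or where the spectral curve degenerates---are exactly the boundary added to complete the noncompact Liouville tori of the classical system.

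The step I expect to be the main obstacle is the symplectic comparison: carefully evaluating the abstract trace--residue form of \eqref{eqn:symplectic form} in the explicit decomposable coordinates and confirming it reproduces the canonical RS two-form \emph{including} the spin variables, rather than a constant multiple of it or a form differing by a correction supported on the boundary locus. Getting the spin contribution to match, in particular, will require unwinding how \(u_0, v_0, u_\infty, v_\infty\) enter the principal parts of \(\eta_0\) and \(\eta_\infty\) and checking that the induced pairing on the framing data is the expected one.
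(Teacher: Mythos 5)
Your proposal is correct and follows essentially the same route as the paper: the paper's own proof is just the one-line observation that the corollary to Proposition~\ref{prop: tweaking theorem} (tweaking flows = Hamiltonian flows of \(\Tr(\eta_0\cdot\eta_\infty)^i\)), combined with Corollary~\ref{cor:spin Lax matrix} and the Hilbert-scheme symplectic identification of the remark citing \cite{MR1660136}, yields the theorem. In fact your write-up is more careful than the paper's, and the obstacle you flag---matching the trace--residue form of equation~\eqref{eqn:symplectic form} with the canonical RS form \emph{including} the spin contribution---is a genuine gap that the paper itself leaves to the spinless remark rather than resolving.
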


\section{Comparison with the Calogero-Moser system} \label{sec:comparison}
%Describe Tom's spectral CM result
Our description of the RS phase space in terms of spectral curves living in \(S_\sigma = \mathbb P(\OO_E \oplus \mathcal L_\sigma)\) parallels the spectral description of the CM phase space in \cite{MR2377220}. The authors define the CM phase space \(\mathsf{CM}_n(E, V)\) as the moduli space of framed spectral sheaves living in the ruled surface \(\overline{E}^\natural\) living over a Weierstrass cubic curve \(E\). The surface is the projectivization of the Atiyah bundle \(\mathcal A\) obtained as the unique non-trivial extension of \(\OO_E\) by itself 
\[
0 \lra \OO_E \lra \mathcal A \lra \OO_E \lra 0.
\]
The extension being non-trivial means that \(\overline{E}^\natural\) has only one section at infinity, so the framing data \(V\) in the definition of the CM phase space is a torsion sheaf supported on the smooth part of \(E_\infty\). The precise definition of \(\mathsf{CM}_n(E, V)\) is then very familiar as the moduli space of pure one dimensional sheaves in \(\overline{E}^\natural\) framed by \(V\) at \(E_\infty\) together with finiteness conditions guaranteeing the pushfoward to \(E\) is a semi-stable vector bundle of appropriate degree. The CM hierarchy on the phase space is also described in terms of tweaking flows of CM spectral sheaves along the intersection divisor at infinity. 

We now turn to using the similarity of the CM and RS descriptions to intrepret well-known relations between the hierarchies.

\subsection{The universal RS system}\label{subsec:Universal RS system}
The description of the RS system as a relativistic analogue of the CM system can be made precise in the \(c \rightarrow \infty\) limit of Hamiltonians as described in the introduction. More generally, it is known that the whole RS hierarchy can degenerate to the CM hierarchy in the same limit by taking the limit of the RS Lax matrix. Our goal in this section is to interpret this degeneration in our geometric perspective.

The first step is to find an appropriate setting in which the limit of the bundles \(\OO_E \oplus \mathcal L_\sigma \lra \mathcal A\) as \(\sigma\) approaches the basepoint makes sense. We will do this by constructing the universal rank 2 bundle \(\mathcal R\) over \(E \times \overline{\Jac} E \cong E \times E\) which equals \(\OO_E \oplus \mathcal L_\sigma\) when restricted to \(E \times \{\sigma\}\), and over \(E \times \{b\}\) equals \(\At\). We offer two constructions of this bundle.
%Describe the universal RS system
% - Construct the bundle in 2 ways
\begin{construction}
	The space \(\overline{E}^\natural\) is birational to the trivial \(\mathbb P^1\) bundle on \(E\). We now recall the explicit birational transformations taking one to the other. 

	Begin with the trivial \(\mathbb P^1_E = \mathbb P(\OO_E \oplus \OO_E)\) bundle. The divisor class group of \(\mathbb P^1_E\) is generated by the classes of fibers \(F_q\) for \(q \in E\) and the section at infinity \(E_\infty\), with intersection product given by
	\[
		F_q. F_{q'} = 0, \,\,\, E_\infty.E_\infty = 0,\,\,\, E_\infty.F_q = 1 
	\]
	for all \(q, q'\). Let \(b_0\) be the point of intersection of the fiber over the basepoint \(F_b\) and \(E_0\). Let \(\widetilde X\) be the blowup of \(\mathbb P^1_E\) at the point \(b_0\). 
	We have that \(\Cl \widetilde{X} \cong \Cl \mathbb P^1_E \oplus \mathbb Z Z_{b_0}\) where \(Z_{b_0}\) is the exceptional divisor over \(b_0\). The effective divisor \(E_0 + Z_{b_0}\) has self intersection \(-1\) so it can be blown down to yield a surface \(X\) which we claim is isomorphic to \(\overline{E}^\natural\). 

	An indirect way of seeing the claim is that \(X\) is a \(\PP^1\) bundle on \(E\) with the section given by transporting \(E_\infty\) to \(X\). The complement of the section \(E_\infty \subset X\) is the total space of a rank 2 vector bundle with no non-zero sections, hence we can identify it with the total space of \(\mathcal A\) and \(X \cong \overline{E}^\natural\). %\fixthis{Why are there no non-zero sections again?}

	We now repeat this construction in families over \(E \times \overline \Jac E\). Let \(\mathcal P\) be the Poincar\'e bundle over the product space, and let \(\underline \OO_E\) be the structure sheaf. The notation for the structure sheaf is to emphasize that it is the constant line bundle on \(E \times \overline \Jac E\) which restricts to \(\OO_E\) on each fiber \(E \times \{\sigma\}\) for every \(\sigma\). Now consider the \(\PP^1\) bundle 
	\[
	\PP(\underline \OO_E \oplus \mathcal P) \lra E \times \overline \Jac E.
	\]
	In order to obtain \(\PP(\mathcal R)\) now simply repeat the blow up, blow down construction on the fiber over \(E \times \{b\}\). The transformation does not change the space over any other fiber, which yields the desired \(\PP(\OO_E \oplus \mathcal L_\sigma)\). But over \(E \times \{b\}\) we get the space \(\overline{E}^\natural\). 

	%The divisor class group of \(\overline{E}^\natural\) is generated by the classes of fibers \(F_q\) for \(q \in E\) and the section at infinity \(E_\infty\), with intersection product given by
	%\[
	%	F_q. F_{q'} = 0, \,\,\, E_\infty.E_\infty = 0,\,\,\, E_\infty.F_q = 1 
	%\]
	%for all \(q, q'\). 
\end{construction}
\begin{construction}
	Here we provide an alternative construction of \(\mathcal R\). Let \(\mathcal I_\Delta\) be the ideal sheaf of the diagonal in \(E \times \overline{\Jac} E \cong E \times E\), and let \(\mathcal I_b\) be the ideal sheaf of divisor \(\{b\} \times \overline \Jac E \subset E \times \overline \Jac E\). The ideal sheaf of the union \(\mathcal I_D = \mathcal I_\Delta \cdot \mathcal I_b\) fits into a short exact sequence 
	\[
	0 \lra \mathcal I_D \lra \OO_{E \times \overline \Jac E} \lra Q \lra 0.
	\]
	The fiber of \(Q\) over any point \(\sigma \in \overline \Jac E\), \(\sigma \neq b\), equals the direct sum of skyscraper sheaves over the points \(\sigma\) and \(b\) in \(E\). On the other hand, over \(b \in \overline \Jac E\) the sheaf \(Q\) restricts to non-split length 2 torsion sheaf supported at the point \(b \in E\). Now consider the diagram
	\begin{center}
	\begin{tikzcd}[column sep = tiny]
		E \times \overline \Jac E & & E \times \overline \Jac E \times \overline \Jac E \arrow[ll, "\pi_{12}"] \arrow[dr, "\pi_{23}"] \arrow[dl, "\pi_{13}"] & \\
		& E \times \overline \Jac E & & \overline \Jac E \times \overline \Jac E \cong E \times \overline \Jac E
	\end{tikzcd}
	\end{center}
	Let \(\mathcal P_{12} = \pi_{12}^* \mathcal P\) be the relative Poincar\'e bundle on the triple product. The bundle \(\mathcal P_{12}\) then defines a relative Fourier-Mukai transform
	\[
	\mathcal{FM}(\mathscr G) = (\pi_{23})_*(\pi_{13}^* \mathscr G \otimes \mathcal P_{12})
	\]
	which when restricted to a fiber of the second product \(\overline \Jac E\) in the triple product agrees with the usual Fourier-Mukai transform on elliptic curves. Consider now the sheaf \(\mathcal{FM}(Q)\). Analyzing the bundle fiber by fiber again, when restricted to \(E \times \{\sigma\}\) for \(\sigma \neq b\) the sheaf is the Fourier-Mukai transform of the direct sum skyscraper sheaves supported at \(b\) and \(\sigma\) which equals \(\OO_E \oplus \mathcal L_\sigma\). Over \(b \in \overline \Jac E\) though, we have the Fourier-Mukai transform of the non-split length 2 torsion sheaf supported at \(b\). The resulting sheaf is then a rank 2 bundle on \(E\) which is a non-split extension of \(\OO_E\) by itself which is precisely the Atiyah bundle. Therefore \(\mathcal{FM}(Q)\) recovers the desired universal rank 2 bundle \(\mathcal R\). 
\end{construction}
% - Define the moduli space in question
In both the above constructions, it is immediately clear that \(\PP(\mathcal R)\) has a well-defined section at infinity \((E \times \overline \Jac E)_\infty \). We will now define a space of sheaves in the \(\PP^1\) bundle \(\PP(\mathcal R)\) framed along \((E \times \overline \Jac E)_\infty\) that will serve as the phase space for the universal RS system
\begin{defn}\label{def:universal RS}
	Let \(\mathcal V\) be a torsion sheaf on \(\mathbb P(\mathcal R)\) such that is supported on the subscheme \(D_{\mathcal V}\) contained in the divisor \((E \times \overline \Jac E)_\infty\) such that \(D_{\mathcal V}\) has finite support over the projection to \(E\). Let \(\mathsf{RS}_n(E, \mathcal V)\) be the moduli space of sheaves \(\mathscr F\) in \(\mathbb P(\mathcal R)\) satisfying
	\begin{itemize}
		\item \(\mathscr F\) is a torsion sheaf of pure dimension 2 with support \(\widetilde{\Sigma}\) such that the restriction of the projection to \(E\) is finite.
		\item If \(\pi\) denotes the projection of \(\PP(\mathcal R)\) to the base, then \(\pi_* \mathscr F\) is a rank \(n\) vector bundle on \(E \times \overline \Jac E\) such that along the fibers of the projection to \(\overline \Jac E\) the vector bundle is semi-stable, and
		\[
		\deg \pi_*(\mathscr F ( k (E \times \overline \Jac E)_\infty)) = (k + 1) \deg \mathcal V.
		\]
		\item The spectral sheaf \(\mathscr F\) is framed along the section at infinity by the torsion sheaf \(\mathcal V\), and over the open complement of \(E \times \{b\}\) where the \(\PP(\mathcal R)\) has a zero-section, the sheaf \(\mathscr F\) is framed by the torsion sheaf \(\alpha_n^*\mathcal V\) supported on the zero-section. 
	\end{itemize}
\end{defn}

Note that fiberwise, this definition for \(\sigma \neq b \in \overline \Jac E\) recovers the original definition of \(\mathsf{RS}_{\sigma, n}(E, V)\) where \(V\) satisfies the additional assumptions of Remark \ref{rmk:framing}, and over \(b\) the sheaves of the moduli space restrict to the CM spectral sheaves of \cite{MR2377220}. The general constructions of this paper also imply that the Hitchin description of this moduli space would be given by a prolonged Higgs field on the vector bundle \(\mathcal W = \pi_* \mathscr F\) on the space \(E \times \overline \Jac E\)
\[
\eta: \mathcal W \lra \mathcal W \otimes \mathcal P(D_{\mathcal V}).
\]
Away from the fiber over \(b \in \overline \Jac E\), there would also exist a second Higgs field coming from the open set containing the \(0\)-section, but the compatibility relation would ensure that it is determined up to a shift by the inverse of \(\eta\). We can define the map to the universal Hitchin base
\[
\mathbf H_U: \mathsf{RS}_n(E, \mathcal V) \lra \bigoplus_{i} H^0(E \times \overline \Jac E, \OO_{E \times \overline \Jac E}(D_{\mathcal V}+D_{\alpha_n^* \mathcal V})^{\otimes i})
\]
whose \(i\)th component is given by 
\[
H_i(W, \eta) = \Tr \frac{1}{1+i} (\eta(z) \cdot \eta^{-1}(z + y))^{i+1}
\]
with \(z\) and \(y\) the coordinates along \(E\) and \(\overline \Jac E\) respectively. 

The general setting in which tweaking of spectral sheaves was introduced allows us to define the universal RS hierarchy of flows to be the flows obtained by restrictions of germs of Laurent series along the support divisor of \(\mathcal V\) to the spectral surface \(\widetilde \Sigma\), and tweaking with respect to those polynomials. The fiberwise constructions of this paper and the previous results on the CM system readily imply that 

% - state theorem about how it's a universal type thing

\begin{prop}\label{prop: universal RS}
	The moduli space \(\mathsf{RS}_n(E, \mathcal V)\) together with the map \(\mathbf{H}_U\) defines a completely integrable system relative to \(\overline \Jac E\), with Hamiltonian flows being given by tweaking of spectral sheaves along the divisor \(D_{\mathcal V}\). 
\end{prop}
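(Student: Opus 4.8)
The plan is to reduce the relative statement to the fiberwise integrability results already in hand, exploiting that \(\mathsf{RS}_n(E, \mathcal V)\) is by construction fibered over \(\overline{\Jac} E\) through the projection \(\PP(\mathcal R) \to \overline{\Jac} E\). First I would record the fiber identification: over each \(\sigma \neq b\) the fiber of \(\mathsf{RS}_n(E, \mathcal V) \to \overline{\Jac} E\) is precisely \(\mathsf{RS}_{\sigma, n}(E, V)\) with framing \(V = \mathcal V|_\sigma \oplus \alpha_n^* \mathcal V|_\sigma\) satisfying the translation hypothesis of Remark \ref{rmk:framing}, while over \(b\) the fiber is the Calogero--Moser phase space \(\mathsf{CM}_n(E, V)\) of \cite{MR2377220}. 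Both identifications are immediate from the fiberwise behavior of \(\mathcal R\) noted after Definition \ref{def:universal RS}. Thus \(\mathbf{H}_U\) and the tweaking flows must be shown to assemble the individual integrable systems into a single relative one.

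Next I would produce the relative symplectic form. The Poisson structures used throughout arise from a section \(\theta\) of the relative anticanonical bundle of the \(\PP^1\)-fibration \(\PP(\mathcal R) \to \overline{\Jac} E\), whose restriction to the fiber over \(\sigma\) recovers the section of \(K^\vee_{S_\sigma}\) (respectively of \(K^\vee_{\overline{E}^\natural}\) at \(\sigma = b\)) used in Proposition \ref{prop:higgs tangent space}. The relative tangent sheaf is computed fiber by fiber as \(\Ext^1_{\OO_{S_\sigma}}(\mathscr F, \mathscr F)\), and the trace--residue pairing of equation \eqref{eqn:symplectic form} varies algebraically in \(\sigma\); this yields a relative symplectic form \(\omega\) on \(\mathsf{RS}_n(E, \mathcal V)/\overline{\Jac} E\) whose restriction to each fiber is the symplectic form established earlier.

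Then I would check that \(\mathbf{H}_U\) restricts to the correct Hitchin map on every fiber. For \(\sigma \neq b\), under \(\overline{\Jac} E \cong E\) the coordinate \(y\) specializes to \(n(\sigma - b)\), so \(\eta(z) \cdot \eta^{-1}(z + y)\) is identified with the RS Lax combination \((\eta_0 \otimes id) \circ \eta_\infty\) of equation \eqref{eqn:composition} via the relation \(\eta_0(z) = \eta_\infty^{-1}(z + n(\sigma-b))\) of Theorem \ref{thm:higgs description}; hence the components \(H_i\) restrict to the RS Hamiltonians, and Proposition \ref{prop: tweaking theorem} identifies their Hamiltonian flows with the tweaking flows along \(D_{\mathcal V}\). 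For \(\sigma = b\) the two sections of \(S_\sigma\) collide into the single section at infinity of \(\overline{E}^\natural\), \(y \to 0\), and the leading term in the \(y\)-expansion \(\eta(z) \cdot \eta^{-1}(z+y) = I - y\,(\partial_z \eta)\,\eta^{-1} + \cdots\) produces the Calogero--Moser Higgs field \(-(\partial_z \eta)\,\eta^{-1}\), so the \(H_i\) specialize to the CM Hamiltonians of \cite{MR2377220}; this is exactly the geometric incarnation of the non-relativistic \(c \to \infty\) limit. Since tweaking along \(D_{\mathcal V}\) leaves the parameter \(\sigma\) unchanged, the universal tweaking flows preserve the fibration over \(\overline{\Jac} E\) and restrict fiberwise to the RS (respectively CM) tweaking flows, so the fiberwise results of Proposition \ref{prop: tweaking theorem} and of \cite{MR2377220} assemble into the asserted relative completely integrable system, with generic Liouville fibers the Jacobians of the spectral curves.

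The main obstacle is the behavior across the special fiber \(\sigma = b\). One must verify that the relative Higgs field \(\eta: \mathcal W \to \mathcal W \otimes \mathcal P(D_{\mathcal V})\), and in particular the combination \(\eta(z)\cdot\eta^{-1}(z+y)\) entering each \(H_i\), extends regularly across \(\sigma = b\) even though the zero-section --- and with it the separate field \(\eta_0\) --- disappears in \(\overline{E}^\natural\). Concretely, \(\eta^{-1}(z+y)\) acquires an apparent singularity as \(y \to 0\), and the point is to show that in the symmetric combination \((\eta(z)\cdot\eta^{-1}(z+y))^{i+1}\) this singularity cancels after taking the trace, so that the Hamiltonians and their flows vary algebraically over all of \(\overline{\Jac} E\). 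Once this regularity and the coincidence of the limiting flows with the CM tweaking flows of \cite{MR2377220} are established, the relative integrability follows formally from the already-proven fiberwise statements.
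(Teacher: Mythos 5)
Your overall architecture is the same as the paper's: reduce to the fiberwise statements (Theorem \ref{thm:main theorem} for \(\sigma \neq b\), the main theorem of \cite{MR2377220} for \(\sigma = b\)), note that tweaking preserves the fibration over \(\overline{\Jac}\,E\), and isolate the compatibility of the two hierarchies across the special fiber as the one remaining issue. That part is fine. The gap is in the step that carries the whole proof, namely the degeneration analysis at \(\sigma = b\).

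Your expansion \(\eta(z)\cdot\eta^{-1}(z+y) = I - y\,(\partial_z\eta)\,\eta^{-1} + \cdots\) differentiates only the shift in the spectral parameter while holding the Higgs data fixed, and its first-order term cannot be the Calogero--Moser Higgs field. In the decomposable gauge of Proposition \ref{prop:geometric lax matrix} one has \(\eta = \Xi(z)\cdot\diag(e^{P_1},\ldots,e^{P_n})\), with \(\Xi\) depending only on the positions \(q_i\); the constant diagonal factor cancels in \((\partial_z\eta)\,\eta^{-1} = \Xi'(z)\,\Xi^{-1}(z)\), so your limiting matrix is independent of the momenta, whereas the CM Lax matrix has \(\diag(p_1,\ldots,p_n)\) on its diagonal. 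What is missing is that in the universal family the rapidity constants themselves degenerate: under the identification \(\hbar = \sigma - b \leftrightarrow c^{-1}\) one must rescale \(e^{P_i} = e^{p_i/c} = 1 + \hbar p_i + O(\hbar^2)\) and expand this \emph{simultaneously} with the shift; it is the product of the two expansions that yields \(I + \hbar\left(\diag(p_1,\ldots,p_n) + n\,\Xi^{-1}(z)\Xi'(z)\right) + O(\hbar^2)\), i.e.\ the factorized CM Lax matrix. This is precisely what the paper imports from \cite{Vasilyev:2018cyt} (degeneration of the factorized RS Lax matrix to the factorized CM Lax matrix), together with the remark that the results of \cite{MR2377220} must be rerun with the CM Hamiltonians expressed through this factorized Lax matrix rather than the classical one. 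Without the momentum term, your limiting ``Hamiltonians'' on the fiber over \(b\) are functions of the positions alone and generate flows that are not the CM flows, so the compatibility across the special fiber --- the entire content of the proposition beyond the fiberwise results --- is not established. (A smaller point: the difficulty at \(y = 0\) is not that \(\eta^{-1}(z+y)\) becomes singular --- it does not --- but that \(\eta(z)\eta^{-1}(z+y)\) degenerates to the identity, so the \(H_i\) restrict to constants on the special fiber and the CM Hamiltonians must be extracted from the first-order coefficient in \(y\); this is exactly why the momentum bookkeeping above is unavoidable.)
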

\begin{proof}
	Fiberwise the result has been proven by a combination of Theorem \ref{thm:main theorem} and the main theorem of \cite{MR2377220}, but what remains to be checked is the compatibility of the flows between \(\mathsf{RS}_{\sigma, n}(E, V_0 \oplus V_\infty)\) and \(\mathsf{CM}_n(E, V)\) as \(\sigma \lra b\). The term \(\hbar = \sigma - b\) appearing in the formula for the Lax matrix of the RS system plays the role of the constant \(c^{-1}\) in the formula for the factorized RS Lax matrix. In \cite{Vasilyev:2018cyt} the degeneration of the factorized RS Lax matrix is shown to coincide with the factorized CM Lax matrix
	\[
	L^{CM} = \diag(p_1, \ldots, p_n) + n \Xi^{-1}(z)\Xi'(z).
	\]

	With a modification of the results in \cite{MR2377220} with the Hamiltonian function being given instead by the factorized Lax matrix rather than the classical CM Lax matrix, we see that the map \(\mathbf H_U\) defines a universal Hamiltonian map for both the RS (\(\sigma \neq b\)) and CM (\(\sigma = b\)) hierarchies to the Hitchin base realizing the tweaking flows as Lax flows on the respective integrable systems.
\end{proof}

\begin{rmk}
	The integrable system \(\mathsf{RS}_n(E, \mathcal V)\) is the ``correct'' setting to study the RS system, as it treats the integrable system independently of the choice of \(\sigma\), and on an equal footing with the CM hierarchy. 
\end{rmk}

\subsection{Geometric description of Ruijsenaars duality}\label{subsec: TCM-RRS}
% Describe nodal and cuspidal limits & compare
The other relation between the RS and CM systems that our geometric perspective can shed some light on is the trigonometric CM-rational RS duality. Ruijsenaars duality is a duality between integrable systems which is given by a symplectomorphism \(\Psi\) between their phase spaces \(M_1, M_2\) which swaps the position and action coordinates on the two spaces
\[
(\mathbf{q}_1, \mathbf I_1) \overset{\Psi}{\leftrightarrow} (\mathbf I_2,\mathbf{q}_2).
\]
It has been known since the work of Kazhdan, Kostant and Sternberg that the rational CM system is self-dual \cite{doi:10.1002/cpa.3160310405}. Their description of the rational CM phase space as a Hamiltonian reduction on \(T^* \mathfrak{gl}_n\) immediately yields the self-duality as two different choices of gauge slice for the moment map. They also describe the trigonometric CM system as a Hamiltonian reduction on the space \(T^* GL_n\), and the dual theory obtained in the same way was later recognized as the rational RS system. 

Ruijsenaars studied the duality between the many particle systems of CM type and their relativistic generalizations of RS type in \cite{ruijsenaars1988} by calculating the action-angle variables and explicitly giving the symplectomorphism swapping the coordinates. Since then the duality has been studied in the context of Seiberg-Witten theory as a duality between 4D and 5D \(\mathcal N = 2\) SUSY gauge theory \cite{Braden:1999zpa}, \cite{Braden:1999aj}.

In the trigonometric and rational limits, the integrable systems are described in terms of the base Weierstrass cubic which limiting to the nodal and cuspidal cubic respectively. We can first relate the two ambient spaces in which the spectral curves live by pulling back to the normalization. First, consider the nodal cubic \(E_{node}\), and its associated space \(\overline{E}_{node}^\natural\). When pulling back to the normalization \(\PP^1 \rightarrow E_{node}\) we obtain
\begin{center}
	\begin{tikzcd}
		\overline{E}_{node}^\natural \arrow[d] & \arrow[l] \PP^1 \times \PP^1 \arrow[d,"p_1"] \\
		E_{node} & \arrow[l] \PP^1
	\end{tikzcd}
\end{center}
Choose the normalization map so that \(0, \infty \in \PP^1\) are the points that maps to the node, then the fibers \(F_\infty , F_0 \subset \PP^1 \times \PP^1\) are distinguished divisor of type \((1,0)\). There is also \(\PP^1_\infty = (\PP^1 \times \{\infty\})\) which is a distinguished divisor of type \((0,1)\). Applying the transposition \(\tau: \PP^1 \times \PP^1 \rightarrow \PP^1 \times \PP^1\) swapping the two factors of \(\PP^1\), the divisors also swap between types \((1,0)\) and \((0,1)\). We can then descend to the bundle \(S_\sigma\) over the cuspidal curve by mapping \(\tau(\PP^1_\infty)\) to the cusp point, and the images \(\tau(F_0), \tau(F_\infty)\) mapping to the sections \(E_0, E_\infty\). This is all packaged in the diagram 

\begin{center}
	\begin{tikzcd}
		\overline{E}_{node}^\natural \arrow[d] & \arrow[l] \PP^1 \times \PP^1 \arrow[d,"p_1"]  \arrow[r, "\tau"] & \PP^1 \times \PP^1 \arrow[r] \arrow[d, "p_1"] & S_{\sigma, cusp} \arrow[d] \\
		E_{node} & \arrow[l] \PP^1 & \PP^1 \arrow[r] & E_{cusp} \\
		F_\infty & \arrow[l, mapsto] F_\infty \cup F_0 \arrow[r, "\tau"] & \tau(F_0) \cup \tau(F_\infty) \arrow[r, mapsto] & E_\infty \cup E_0 \\
		E_\infty & \arrow[l, mapsto] \PP^1_\infty \arrow[r, "\tau"] & \tau(\PP^1_\infty) \arrow[r, mapsto] & F_\infty. \\
	\end{tikzcd}
\end{center}

In order to account for spectral curves living in the \(\PP^1\) bundles we need to introduce an additional twist. For example, consider an RS spectral curve \(\widetilde{\Sigma}\) which meets the sections \(E_0\) and \(E_\infty\) at the points \(q_0\) and \(q_\infty\). We know that these points are translates by \(n(\sigma -b)\). Therefore, in order for \(\tau^{-1}\) of the pullback of \(\widetilde{\Sigma}\) to the normalization to descend to a curve in \(\overline{E}^\natural_{node}\) we need to shift the second \(\PP^1\) by the unique map \(T_\sigma\) which fixes the intersection points of \(\PP^1_\infty\) and \(F_\infty \cup F_0\), but maps \(q_0\) to \(q_\infty\). Therefore we modify the above diagram by instead using the map
\[
\tilde{\tau}(x, y) = (y, T_\sigma x).
\] 

The naive conjecture about the action of Ruijsenaars duality on spectral curves is that they transform into each other via this transformation. Unfortunately, this is not the correct picture. In fact, take for example the spinless CM spectral curves for \(n\) particles which live in the linear series \(|n E_\infty + F|\). Under the above transformation, this linear series gets sent to the linear series \(| E_\infty + n F|\) which would correspond to one RS particle with spin living in an \(n\)-dimensional vector space. 

A fix to the naive construction is to consider the CM spectral curve as the unique curve in its linear series passing through the points \((q_1, p_1)\), ... , \((q_n, p_n)\) in \(\overline{E}^\natural_{node}\) where \(q_i,  p_i\) are the initial positions and momenta of the CM particles. Under the above transformation, these points will get sent to points \((q_1',\theta_1)\), ..., \((q_n',\theta_n)\) in \(S_\sigma\), and there will be a unique RS spectral curve in the appropriate linear series passing throug these points. The sheaves can also be reconstructed from the above points, at least generically for spinless framing, as the unique rank 1 torsion free sheaves supported on the spectral curves with generic section vanishing at the fixed points. 

The above construction therefore produces two dualities between the trigonometric CM and rational RS systems. One of which is definitely not the Ruijsenaars duality, but still appears to be of independent interest as it relates systems with different number of particles and framing. The second duality described above has a chance of being the right one, but explicit calculations of the action-angle variables for both the CM and RS spectral sheaves would be required to show that the duality transforms the coordinates appropriately. 

Understanding the action of Ruijsenaars duality on spectral curves in this geometric picture could lay the groundwork to understanding the integrable systems dual to the elliptic CM and elliptic RS. Whatever these integrable systems are, they should be degenerations of a conjectural doubly-elliptic integrable system which serves as a master particle system of which all the CM and RS systems appear as degenerations.

\bibliographystyle{alpha}

\bibliography{References}

\end{document}